\newcommand{\Z}{\ensuremath{\mathbb{Z}}}
\newcommand{\R}{\ensuremath{\mathbb{R}}}
\newcommand{\C}{\ensuremath{\mathbb{C}}}
\newcommand{\A}{\ensuremath{\mathbb{A}}}
\newcommand{\Tr}{\ensuremath{\mathrm{tr}\,}}
\newcommand{\Resprod}{\ensuremath{{\prod}'}}
\newcommand{\dd}{\ensuremath{\,\mathrm{d}}}
\newcommand{\angles}[1]{\ensuremath{\langle #1 \rangle}}
\newcommand{\mes}{\ensuremath{\mathrm{mes}}}
\newcommand{\identity}{\ensuremath{\mathrm{id}}}
\newcommand{\Hom}{\ensuremath{\mathrm{Hom}}}
\newcommand{\rightiso}{\ensuremath{\stackrel{\sim}{\rightarrow}}}
\newcommand{\Ker}{\ensuremath{\mathrm{Ker}\,}}
\theoremstyle{plain}
\newtheorem{proposition}{Proposition}[section]
\newtheorem{lemma}[proposition]{Lemme}
\newtheorem{theorem}[proposition]{Théorème}
\newtheorem{corollary}[proposition]{Corollaire}
\theoremstyle{definition}
\newtheorem{definition}[proposition]{Définition}
\newtheorem{definition-theorem}[proposition]{Définition-Théorème}
\newtheorem{definition-proposition}[proposition]{Définition-Proposition}
\newtheorem{remark}[proposition]{Remarque}
\newcommand{\bmu}{\ensuremath{\bbmu}}
\newcommand{\noyau}{\ensuremath{\boldsymbol{\varepsilon}}} % l'élément non trivial dans le noyau du revêtement métaplectique
\newcommand{\rev}{\ensuremath{\mathbf{p}}} % Le symbole pour revêtements
\renewcommand{\Re}{\ensuremath{\mathrm{Re}\xspace}}
\renewcommand{\Im}{\ensuremath{\mathrm{Im}\xspace}}
\title{La formule des traces pour les revêtements de groupes réductifs connexes. III. \\ Le développement spectral fin}
\author{Wen-Wei Li}
\date{}
\begin{document}

\maketitle

%\selectlanguage{english}
%\begin{abstract}
%  foobar
%\end{abstract}

\begin{abstract}
  Nous poursuivons l'étude de la formule des traces pour certains revêtements de groupes réductifs connexes en déduisant une formule explicite du côté spectral, basée sur des résultats d'analyse harmonique locale dans les travaux précédents. Les arguments sont dus à Arthur et nous expliquons comment ils s'adaptent aux revêtements.
\end{abstract}

%MSC 2010: 11F72 (Primary), 11F70 (Secondary)
\begin{flushleft}
  \small MSC classification (2010): \textbf{11F72}, 11F70.
\end{flushleft}

\tableofcontents

% To be included
% La formule des traces pour les revêtements de groupes réductifs connexes. III. Le développement spectral fin
\opt{these}{\chapter{Le développement spectral fin}}

\section{Introduction}
Cet article succède à \cite{Li10a, Li11a}, qui visent à établir la formule des traces invariante d'Arthur pour une certaine classe d'extensions centrales des groupes topologiques
$$ 1 \to \bmu_m \to \tilde{G} \xrightarrow{\rev} G(\A) \to 1 $$
où $\bmu_m := \{\noyau \in \C^\times : \noyau^m=1\}$, $G$ est un groupe réductif connexe sur un corps de nombres $F$, et $\A = \A_F$ est l'anneau d'adèles. On exige aussi certaines propriétés rendant l'étude des formes automorphes possible, par exemple un scindage de $\rev$ au-dessus de $G(F)$ et la commutativité des algèbres de Hecke sphériques en presque toute place non archimédienne de $F$; voir \cite{Li10a} pour les détails. On l'appelle un revêtement adélique à $m$ feuillets.

La formule des traces grossière, ainsi qu'elle est énoncé dans \cite{Li10a}, est une égalité de la forme
$$ \underbrace{\sum_{\mathfrak{o} \in \mathcal{O}^G} J_{\mathfrak{o}}(f)}_{\text{le côté géométrique}} = J(f) = \underbrace{\sum_{\chi \in \mathfrak{X}^{\tilde{G}}} J_\chi(f)}_{\text{le côté spectral}}, \quad f \in C^\infty_c(\tilde{G}^1). $$

Ici $\mathcal{O}^G$ signifie l'ensemble des classes de conjugaison semi-simples dans $G(F)$ et $\mathfrak{X}^{\tilde{G}}$ signifie l'ensemble des données cuspidales automorphes, autrement dit des $W^G_0$-orbites des paires $(M,\sigma)$ où $M$ est un sous-groupe de Lévi semi-standard et $\sigma$ est une représentation automorphe cuspidale de $\tilde{M}^1$.

Les termes sont effectivement les valeurs en $T=T_0$ des polynômes $J^T_{\mathfrak{o}}(f)$, $J^T_\chi(f)$ et $J^T(f)$ en $T \in \mathfrak{a}_0$, où $T_0$ est un point canonique défini par Arthur dépendant du choix d'un sous-groupe compact maximal. Le terme $J^T(f)$ est l'intégrale diagonale d'un certain noyau tronqué de l'action régulière de $f$ sur $L^2(G(F) \backslash \tilde{G}^1)$ pour $T$ suffisamment régulier. Une telle formule n'est pas assez utile pour des problèmes globaux. Par exemple, $J^T_\chi(f)$ est défini en termes des produits scalaires des séries d'Eisenstein tronquées, intégrés sur l'axe imaginaire, dont on ne s'attend à aucune formule utile et générale. Il faut donc trouver des formules plus fines pour les termes $J_{\mathfrak{o}}(f)$ et $J_{\chi}(f)$.

Le côté géométrique est déjà traité dans \cite{Li10a}, où est obtenu un développement à la Arthur \cite{Ar86} en termes des intégrales orbitales pondérées. Dans cet article, nous considérons le côté spectral en suivant toujours les arguments d'Arthur \cite{Ar82-Eis1, Ar82-Eis2}. Nous utilisons les fonctions test $\tilde{K}$-finies afin d'avoir des résultats de finitude, où $\tilde{K}$ est un sous-groupe compact maximal convenable de $\tilde{G}$. Pour de telles fonctions $f$ nous obtenons une formule dans le Corollaire \ref{prop:formule-J_chi-simple}:
\begin{multline*}
  J_\chi(f) = \sum_{M \in \mathcal{L}(M_0)} \sum_{\pi \in \Pi_\mathrm{unit}(\tilde{M}^1)} \sum_{L \in \mathcal{L}(M)} \sum_{s \in W^L(M)_\text{reg}} |W^M_0| |W^G_0|^{-1} \cdot \\
  \cdot |\det(s-1|\mathfrak{a}^L_M)|^{-1} \int_{i(\mathfrak{a}^G_L)^*} \Tr(\mathcal{M}_L(\tilde{P},\lambda) M_{P|P}(s,0) \mathcal{I}_{\tilde{P}}(\lambda,f)_{\chi,\pi}) \dd\lambda
\end{multline*}
où $P \in \mathcal{P}(M)$ est arbitraire.

Suivant \cite{Ar88-TF2}, nous regroupons ensuite les $\chi$ selon les normes des parties imaginaires de leurs caractères infinitésimaux $\nu_\chi$. On introduit ainsi $J_t(f) := \sum_{\chi: \|\Im \nu_\chi\|=t} J_\chi(f)$ pour tout $t \geq 0$. Si l'on extrait les termes avec $L=G$ et $\|\Im\nu_\chi\|=t$ dans la formule précédente, on arrive à la partie $t$-discrète $J_{\text{disc},t}(f)$ de la formule des traces. C'est la partie qui nous intéresse. Grâce à des propriétés d'admissibilité enterrées dans la construction du spectre discret \cite[V]{MW94}, on introduit un ensemble $\Pi_{\text{disc},t}(\tilde{G})$ de représentations de $\tilde{G}^1$ et des coefficients $a^{\tilde{G}}_{\text{disc}}(\pi)$ pour tout $\pi \in \Pi_{\text{disc},t}(\tilde{G})$, tel que
$$ J_{\text{disc},t}(f) = \sum_{\pi \in \Pi_{\text{disc},t}(\tilde{G})} a^{\tilde{G}}_{\text{disc}}(\pi) \Tr\pi(f). $$

Ce n'est pas clair si $\sum_t J_{\text{disc},t}(f)$ converge absolument. Nous n'abordons pas cette question malgré son utilité dans des applications. Toutefois il n'est pas impossible d'adapter les méthodes dans le cas réductif \cite{FLM11}. 

Remarquons que, dans \cite{Li10a} on ne considère que la partie spécifique de la formule des traces, c'est-à-dire on se limite aux représentations spécifiques et aux fonctions anti-spécifiques, bien que le cas général en résulte sans peine. Cette restriction est cruciale pour le côté géométrique mais n'importe pas pour le côté spectral. On autorise ainsi toutes les représentations et fonctions test de $\tilde{G}^1$ dans cet article.

Selon \cite{Ar02}, l'étape suivante sera de définir les caractères pondérés locaux canoniquement normalisés \cite{Ar98}, et les coefficients globaux $a^{\tilde{M}}(\cdot)$ pour tout sous-groupe de Lévi semi-standard $M$. On obtiendra ainsi un développement de $J_t(f)$ en termes de ces objets, pour tout $t \geq 0$. Ceci diffère de l'approche antérieure dans \cite{Ar88-TF2} en ce que nos caractères pondérés seront canoniquement définis, et que nous utiliserons les facteurs normalisants locaux non ramifiés pour les coefficients $a^{\tilde{M}}(\cdot)$ qui font intervenir des fonctions $L$ partielles au lieu des fonctions $L$ automorphes et des facteurs $\varepsilon$ selon l'approche antérieure. Pour cela, il faut aussi une majoration de la convergence absolue en termes de multiplicateurs \cite[\S 6]{Ar88-TF2}. Nous les laissons à un article à venir.

Un mot sur les définitions: nous dévions des conventions d'Arthur adoptées dans \cite[\S 6.1]{Li10a} en changeant la définition de l'espace $\mathcal{A}^2(\tilde{P})$ (voir \S\ref{sec:prod-Eis}). La nouvelle convention a deux avantages. D'abord les termes $\rho_P$ disparaissent dans les définitions des séries d'Eisenstein et des opérateurs d'entrelacement globaux. Deuxièmement, cela est plus compatible avec la situation locale dans \cite{Li11a} et facilite la comparaison local-global des opérateurs d'entrelacement dans \S\ref{sec:conv}. Ce changement n'affecte nullement les arguments d'Arthur.

Enfin, donnons quelques remarques concernant le style et les preuves. Les arguments sont tous dus à Arthur et un expert peut s'en convaincre immédiatement. Plus précisément, les ingrédients importants sont le suivants.
\begin{enumerate}
  \item Décomposition spectrale \cite{MW94}, notamment la construction du spectre discret par résidus. C'est valable pour les revêtements.
  \item Majorations précises pour le côté spectral de la formule des traces grossière \cite[Appendix]{Ar82-Eis1}. Cette partie est de nature plus élémentaire modulo la décomposition spectrale, et nous ne le traitons pas en profondeur.
  \item Théorème de Paley-Wiener d'Arthur \cite{Ar83}, qui est valable pour tout groupe réel dans la classe de Harish-Chandra, y compris les composantes archimédiennes des revêtements adéliques.
  \item Formule du produit scalaire des séries d'Eisenstein tronquées \cite{Ar82-IP}. Malheureusement cela n'est pas encore complètement rédigée, même dans \cite{MW94}. Nous nous contenterons d'en donner les énoncés et d'indiquer qu'elle s'appuie sur la théorie de base de la décomposition spectrale, eg. le calcul des termes constants des séries d'Eisenstein.
  \item Normalisation des opérateurs d'entrelacement locaux, qui est faite dans \cite{Li11a} pour les revêtements.
  \item Arguments combinatoires d'Arthur, eg. la théorie de $(G,M)$-familles, qui marchent sans modification.
\end{enumerate}

Ayant justifiés ces ingrédients, notre tâche est de clarifier la logique des démonstrations d'Arthur. Parfois nous ne donnons qu'une esquisse de la démonstration afin d'éviter de créer un texte géant et illisible. D'autre part, nous remplissons aussi les détails de certains arguments d'Arthur, par exemple dans la Proposition \ref{prop:adm}.

\paragraph{Organisation de cet article}
Dans le \S\ref{sec:prem}, nous rappelons le formalisme pour les revêtements mis dans \cite{Li10a}.

Dans le \S\ref{sec:prod-Eis}, nous mettons en place les définitions des séries d'Eisenstein et des opérateurs d'entrelacement. Nous donnons ensuite l'énoncé complet de la formule du produit scalaire des séries d'Eisenstein tronquées, qui permet de donner une description simple asymptotique pour $J_\chi^T(f)$.

Dans le \S\ref{sec:asymptotique}, nous utilisons un théorème de multiplicateur d'Arthur pour insérer une fonction $B \in C_c^\infty(i\mathfrak{h}^*/i\mathfrak{a}_G^*)^W$ avec $B(0)=1$ dans l'intégrale définissant $J_\chi^T(f)$ et définissons un polynôme $P^T(B)$ asymptotique à $J^T_\chi(f)$, qui permet de contourner des problèmes analytiques. On obtient $J_\chi^T(f)$ en évaluant $\lim_{\epsilon \to 0} P^T(B^\epsilon)$ avec $B^\epsilon(\cdot) = B(\epsilon \cdot)$.

On arrive à une formule explicite pour $P^T(B)$ dans le \S\ref{sec:conv} en introduisant des $(G,M)$-familles. La famille $c_Q$ contrôle la dépendance sur $T$, tandis que l'autre famille $d_Q$ est définie à l'aide des opérateurs d'entrelacement. La formule cherchée résulte alors des formules de descente dans la théorie de $(G,M)$-familles.

Pour se débarrasser des termes $B^\epsilon$ dans les formules précédentes, il faut montrer que des fonctions $r^S_L(\pi_\lambda, P)$ qui se déduisent de facteurs normalisants globaux sont à croissance modérée en $\lambda \in i(\mathfrak{a}^G_L)^*$, ce qui est le sujet du \S\ref{sec:conv}. Enfin, on définit $J_{\text{disc},t}(f)$ et déduit un développement dans le \S\ref{sec:coef}.

\paragraph{Remerciements}
Cet article est rédigé à la fin de ma Longue Marche doctorale à Paris. J'exprime ma profonde gratitude à mon directeur de thèse Jean-Loup Waldspurger, qui a très attentivement lu le manuscrit, ainsi que les membres de l'équipe des formes automorphes de l'Institut de Mathématiques de Jussieu pour les excellentes conditions de travail qu'ils ont fournies pendant ces trois années.

\section{Préliminaires}\label{sec:prem}
Rappelons brièvement le formalisme dans \cite{Li10a} pour les revêtements. On se donne
\begin{itemize}
  \item $F$: un corps de nombres;
  \item soit $v$ une place de $F$, on note $F_v$ le complété de $F$ en $v$; si $v$ est non archimédienne, on note $\mathfrak{o}_v$ son anneau des entiers;
  \item $\A = \A_F := \Resprod_v F_v$: l'anneau d'adèles de $F$;
  \item $G$: un $F$-groupe réductif connexe;
  \item une extension centrale $1 \to \bmu_m \to \tilde{G} \stackrel{\rev}{\to} G(\A) \to 1$ de groupes topologiques, où $m$ est un entier positif, qui forme un revêtement adélique à $m$ feuillets; en particulier on dispose d'un scindage $\mathbf{i}: G(F) \to \tilde{G}$ de $\rev$, par lequel $G(F)$ s'identifie à un sous-groupe discret de $\tilde{G}$ (voir \cite[\S 3]{Li10a});
  \item $M_0$: un sous-groupe de Lévi minimal de $G$.
\end{itemize}

\paragraph{Notations combinatoires}
Nous utilisons les notations dans \cite{Li10a} concernant les sous-groupes de Lévi, les paraboliques et les racines, etc, qui sont pour l'essentiel celles d'Arthur. Par exemple, on note $W^G_0$ le groupe de Weyl associé à $M_0$. Si $M \in \mathcal{L}(M_0)$, on note
\begin{align*}
  W^G(M) & := \{w \in W^G_0: wMw^{-1} = M\}/W^M_0 = \text{Norm}_G(M)/M, \\
  W^G(M)_\text{reg} & := \{w \in W^G(M): \Ker(w-1|\mathfrak{a}^G_M) = \{0\} \}.
\end{align*}

Soient $M, M' \in \mathcal{L}(M)$, on pose
$$ W(\mathfrak{a}_M, \mathfrak{a}_{M'}) := \{w \in W^G_0: wMw^{-1}=M' \} / W^M_0 . $$

Fixons un sous-groupe compact maximal spécial $K = \prod_v K_v$ de $G(\A)$ tel que $K_v$ est en bonne position relativement à $M_0$ pour toute place $v$. Il convient aussi de fixer un sous-groupe parabolique minimal $P_0 \in \mathcal{L}(M_0)$, bien que nos résultats finaux n'en dépendront pas. Arthur montre dans \cite{Ar81} qu'il existe un unique élément $T_0 \in \mathfrak{a}^G_0$ tel que
$$ H_{P_0}(\hat{w}^{-1}) = (1-w^{-1})T_0, \quad w \in W^G_0 $$
où $\hat{w}$ désigne un représentant quelconque de $w$ dans $G(F)$. On peut écrire $\hat{w} = m\tilde{w}$ avec $m \in M_0(\A)$ et $\tilde{w} \in K$, d'où $H_{P_0}(\hat{w}^{-1})=H_{M_0}(m^{-1})$. Donc $T_0$ ne dépend pas du choix de $P_0 \in \mathcal{P}(M_0)$.

\paragraph{Revêtements}
Si $v$ est une place de $F$, on note $\rev_v : \tilde{G} \to G(F_v)$ la fibre de $\rev$ au-dessus de $G(F_v)$. Dans la définition des revêtements adéliques, c'est sous-entendu que l'on fixe un ensemble fini $V_\text{ram}$ des places de $F$ contenant toutes les places archimédiennes. On note $\mathfrak{o}_\text{ram}$ l'anneau des $V_\text{ram}$-entiers dans $F$ et on fixe un $\mathfrak{o}_\text{ram}$-modèle du schéma en groupes $G$. Pour $v \notin V_\text{ram}$, on prend $K_v := G(\mathfrak{o}_v)$. Grosso modo, l'hypothèse est qu'il existe un scindage $s_v: K_v \hookrightarrow \tilde{G}_v$ de $\rev_v$ tel que le triplet $(\rev_v, K_v, s_v)$ fournit un revêtement non ramifié pour tout $v \notin V_\text{ram}$, et que les applications $s_v$ définissent une section du revêtement adélique sur un ouvert compact (cf. \cite[\S 3.1 et \S 3.3]{Li10a}). Quitte à agrandir $V_\text{ram}$, on peut supposer de plus que $K_v$ est en bonne position relativement à $M_0$ pour toute place $v \notin V_\text{ram}$. Fixons un sous-groupe compact maximal $K = \prod_v K_v$ de $G(\A)$ et posons $\tilde{K} := \rev^{-1}(K)$, tel que
\begin{itemize}
  \item $K_v$ est spécial et en bonne position relativement à $M_0$ pour tout $v$;
  \item pour tout $v \notin V_\text{ram}$, $K_v$ est le groupe hyperspécial choisi précédemment qui s'identifie à un sous-groupe de $\tilde{G}_v$ à l'aide du scindage $s_v$.
\end{itemize}

Les éléments et sous-groupes de $\tilde{G}$ sont dotés du symbole $\sim$, par exemple $\tilde{x}$, $\tilde{P}$, $\tilde{M}$, etc, tandis que leurs images par $\rev$ sont notées par $x$, $P$, $M$, etc.

On a $\tilde{G} = \Resprod_v \tilde{G}_v / \left\{ (\noyau_v)_v \in \bigoplus_v \bmu_m : \prod_v \noyau_v = 1 \right\}$, où le produit restreint est pris par rapport à $(K_v)_{v \notin V_\text{ram}}$.

Rappelons aussi que, pour toute place $v$, il existe un scindage canonique de $\rev_v$ au-dessus de la sous-variété unipotente $G_\text{unip}(F_v)$, qui se restreint en un homomorphisme sur chaque sous-groupe unipotent. Ces scindages se rassemblent en un scindage du revêtement adélique $\rev$. Identifions ainsi les éléments unipotents de $G(\A)$ à des éléments de $\tilde{G}$.

Imposons les mêmes conventions que dans \cite{Li10a} pour les mesures de Haar sur les revêtements ainsi que les espaces vectoriels $\mathfrak{a}_M$, $\mathfrak{a}^G_M$, etc. Fixons une norme $W^G_0$-invariante $\|\cdot\|$ sur $\mathfrak{a}_0$ induisant la mesure de Haar choisie sur $\mathfrak{a}_0$.

\paragraph{Représentations et fonctions}
Soit $H$ un groupe localement compact, on note $\Pi_\mathrm{unit}(H)$ l'ensemble de classes d'équivalences des représentations unitaires irréductibles de $H$. Soit $\pi \in \Pi_\mathrm{unit}(\tilde{G})$. Il n'est pas toujours un produit tensoriel restreint car $\tilde{G}$ n'est pas forcément un produit restreint. Or on peut tirer $\pi$ en une représentation de $\Resprod_v \tilde{G}_v$ et puis l'écrire comme un produit tensoriel restreint, grâce à notre hypothèse sur la commutativité des algèbres de Hecke sphériques \cite[\S 3.2]{Li10a}: la clé est que pour toute $v \notin V_\text{ram}$ et toute représentation lisse irréductible de $\tilde{G}_v$, son sous-espace fixé par $K_v$ est de dimension $1$. Ainsi, par abus de notation on écrira
$$ \pi = \bigotimes_v \pi_v . $$

Pour $\pi$ comme ci-dessus et $\lambda \in \mathfrak{a}_G^*$, on déduit une nouvelle représentation irréductible
$$ \pi_\lambda := \pi \otimes e^{\angles{\lambda, H_{\tilde{G}}(\cdot)}}. $$

Soient $M \in \mathcal{L}^G(M_0)$, $\pi \in \Pi_\mathrm{unit}(\tilde{M})$ et $w \in W^G_0$ avec un représentant $\hat{w} \in G(F)$. On note $w\pi \in \Pi_\mathrm{unit}(w\tilde{M}w^{-1})$ la représentation $(w\pi)(\tilde{m}') = \pi(\hat{w}^{-1}\tilde{m}' \hat{w})$ pour $\tilde{m}' \in w\tilde{M}w^{-1}$. La représentation est indépendante du choix de $\hat{w}$ à équivalence près. Idem pour $\tilde{M}^1$ au lieu de $\tilde{M}$. Rappelons d'ailleurs que nous avons défini dans \cite[3.4]{Li10a} un sous-groupe central $A_{M,\infty}$ de $\tilde{M}$ qui est isomorphe à un espace euclidien (et désolé pour le conflit potentiel de notations), tel que  $\tilde{M} = \tilde{M}^1 \times A_{M,\infty}$. Ainsi, $\Pi_\mathrm{unit}(\tilde{M}^1)$ se plonge dans $\Pi_\mathrm{unit}(\tilde{M})$.

Soit $\phi: G(F) \backslash \tilde{G}^1 \to \C$ une fonction localement intégrable. Soit $P \in \mathcal{F}(M_0)$, on note $\phi_P(\tilde{x}) := \int_{U(F) \backslash U(\A)} \phi(u\tilde{x}) \dd u$ son terme constant le long de $\tilde{P}$. C'est encore une fonction localement intégrable.

Soit $P \in \mathcal{F}(M_0)$, posons
$$ d_P(T) := \sup \{ \angles{\alpha, T} : \alpha \in \Delta_P \}, \quad T \in \mathfrak{a}_0 $$
et écrivons $d_0(T) := d_{P_0}(T)$.

Pour $T \in \mathfrak{a}_0$ avec $d_0(T) \gg 0$, on peut définir la fonction tronquée $\Lambda^T \phi$ par
$$ \Lambda^T \phi(\tilde{x}) = \sum_{P=MU \supset P_0} (-1)^{\dim A_M/A_G} \sum_{\gamma \in P(F) \backslash G(F)} \hat{\tau}_P(H_P(\gamma x)-T) \phi_{\tilde{P}}(\gamma\tilde{x}) $$
où $\hat{\tau}_P$ est la fonction caractéristique de
$$ \{ H \in \mathfrak{a}_0 : \forall \alpha \in \Delta_P, \; \angles{\varpi_\alpha, H} > 0 \}. $$

\section{Produit scalaire des séries d'Eisenstein tronquées}\label{sec:prod-Eis}
\paragraph{Définitions de base}
Fixons les objets $\rev: \tilde{G} \to G(\A)$, $P_0$, $M_0$, $K$ comme dans la section précédente. Soient $M \in \mathcal{L}(M_0)$, $P=MU \in \mathcal{P}(M)$ tel que $P \supset P_0$. L'espace $L^2(M(F) \backslash \tilde{M}^1)$ est un espace de Hilbert de façon évidente. On définit les espaces suivants
\begin{align*}
  \mathcal{A}(\tilde{P}) = \mathcal{A}(U(\A)M(F) \backslash \tilde{G}) & : \quad \text{les formes automorphes sur } U(\A)M(F) \backslash \tilde{G} , \\
  \mathcal{A}_\text{cusp}(\tilde{P}) & : \quad \text{le sous-espace des formes cuspidales}.
\end{align*}
Cf. \cite[I.2.17]{MW94}; dans cet article nous factorisons le sous-groupe $A_{M,\infty}$, par conséquent nous ne fixons pas le caractère central.

Pour $\phi \in \mathcal{A}(\tilde{P})$ et $\tilde{x} \in \tilde{G}$, on définit $\phi_{\tilde{x}}(\cdot) := \delta_P(\cdot)^{-\frac{1}{2}} \phi(\cdot \tilde{x})$ qui est une fonction sur $\tilde{M}$. Définissons ensuite

\begin{equation*}
  \mathcal{A}^2(\tilde{P}) := \left\{ \begin{aligned}
    \phi \in \mathcal{A}(\tilde{P}) : & \forall \tilde{x} \in \tilde{G}, \phi_{\tilde{x}} \in L^2(M(F) A_{M,\infty} \backslash \tilde{M}),  \\
    & \text{ et } \int_{\tilde{K}} \int_{M(F) \backslash \tilde{M}^1} |\phi_{\tilde{k}}(\tilde{m})|^2 \dd\tilde{m}\dd\tilde{k} < +\infty
  \end{aligned} \right\},
\end{equation*}
$$ \mathcal{A}^2_\text{cusp}(\tilde{P}) := \mathcal{A}^2(\tilde{P}) \cap \mathcal{A}_\text{cusp}(\tilde{P}). $$

Désormais nous identifions $M(F) A_{M,\infty} \backslash \tilde{M}$ et $M(F) \backslash \tilde{M}^1$. Soient $\phi, \phi' \in \mathcal{A}^2(\tilde{P})$, on pose
$$ (\phi|\phi') := \int_{\tilde{K}} \int_{M(F) \backslash \tilde{M}^1} \phi_{\tilde{k}}(\tilde{m}) \overline{\phi'_{\tilde{k}}(\tilde{m})} \dd\tilde{m} \dd\tilde{k} $$
qui fournit une structure pré-hilbertienne. On note $\overline{\mathcal{A}^2}(\tilde{P})$, etc, les complétés hilbertiens ainsi obtenus. 

Soit $\lambda \in \mathfrak{a}_{M,\C}^*$, on définit une représentation de $\tilde{G}$ sur $\overline{\mathcal{A}^2}(\tilde{P})$, notée $\mathcal{I}_{\tilde{P}}(\lambda)$, en posant
$$ (\mathcal{I}_{\tilde{P}}(\lambda, \tilde{y})\phi)(\tilde{x}) = \phi(\tilde{x}\tilde{y}) e^{\angles{\lambda , H_P(xy)-H_P(x)}}, \quad \tilde{x},\tilde{y} \in \tilde{G}. $$

Si $\pi \in \Pi_\mathrm{unit}(\tilde{M}^1)$, on note $L^2(M(F) \backslash \tilde{M}^1)_\pi$ le sous-espace $\pi$-isotypique de $L^2(M(F) \backslash \tilde{M}^1)$. Notons $\mathcal{A}^2(\tilde{P})_\pi$ l'espace des fonctions $\phi \in \mathcal{A}^2(\tilde{P})$ telles que $\phi_{\tilde{x}} \in L^2(M(F) \backslash \tilde{M}^1)_\pi$ pour tout $\tilde{x}$. On définit $\mathcal{A}^2_\text{cusp}(\tilde{P})_\pi$ de la même façon. Remarquons que $\mathcal{A}^2(\tilde{P})_\pi \neq \{0\}$ si et seulement si $\pi$ intervient dans $L^2_\text{disc}(M(F) \backslash \tilde{M}^1)$ d'après \cite[V.3.17]{MW94}. Donc $\mathcal{I}_{\tilde{P}}(\lambda)$ s'identifie à l'induite parabolique normalisée de $L^2_\text{disc}(M(F) \backslash \tilde{M}^1) \otimes e^{\angles{\lambda, H_M(\cdot)}}$.

Soient $\phi \in \mathcal{A}(\tilde{P})$ et $\lambda \in \mathfrak{a}_{M,\C}^*$, on définit la série d'Eisenstein $E(\phi, \lambda)$, qui est la fonction $\tilde{x} \mapsto E(\tilde{x},\phi,\lambda)$ sur $G(F) \backslash \tilde{G}$ donnée par la formule suivante lorsque $\angles{\Re\lambda,\alpha^\vee} \gg 0$ pour tout $\alpha \in \Delta_P$

$$ E(\tilde{x}, \phi, \lambda) = \sum_{\gamma \in P(F) \backslash G(F)} \phi(\gamma\tilde{x}) e^{\angles{\lambda, H_{P_1}(\gamma x)}}. $$

Cela se prolonge en une fonction méromorphe en $\lambda \in \mathfrak{a}_{M,\C}^*$. D'autre part, soient $\lambda$ comme ci-dessus, $w \in W^G_0$ et $M' := wMw^{-1}$ tels qu'il existe $P' = M'U' \in \mathcal{P}(M')$ avec $P' \supset P_0$, on a l'opérateur d'entrelacement
$$ M(w, \lambda):  \mathcal{A}(\tilde{P}) \to \mathcal{A}(\tilde{P}'). $$
Lorsque $\angles{\Re\lambda,\alpha^\vee} \gg 0$ pour tout $\alpha \in \Delta_P$, cela est défini par l'intégrale absolument convergente
 
$$ M(w,\lambda)\phi: \tilde{x} \longmapsto \int_{(U' \cap wUw^{-1})(\A) \backslash U'(\A)} \phi(\hat{w}^{-1} u\tilde{x}) e^{\angles{\lambda, H_P(\hat{w}^{-1}ux)}} e^{\angles{-w\lambda, H_{P'}(x)}} \dd u $$
où $\hat{w} \in G(F)$ est un représentant quelconque de $w$. Cela se prolonge en une fonction méromorphe en $\lambda$. Il entrelace $\mathcal{I}_{\tilde{P}}(\lambda)$ et $\mathcal{I}_{\tilde{P}'}(w\lambda)$, et il envoie $\mathcal{A}^2(\tilde{P})_\pi$ sur $\mathcal{A}^2(\tilde{P}')_{w\pi}$. On vérifie aussi qu'il ne dépend que de la classe $w W^M_0$. On a les équations fonctionnelles suivantes
\begin{align*}
  E(M(w,\lambda)\phi, w\lambda) & = E(\phi,\lambda), \\
  M(w_1 w_2, \lambda) & = M(w_1, w_2\lambda) M(w_2, \lambda).
\end{align*}

C'est connu que $E(\phi,\lambda)$ et $M(w,\lambda)\phi$ sont réguliers lorsque $\lambda \in i\mathfrak{a}_M^*$ et $\phi \in \mathcal{A}^2(\tilde{P})$; de plus, $M(w,\lambda)$ est unitaire sur $\overline{\mathcal{A}^2}(\tilde{P})$. Voir \cite[VI.2]{MW94}.

L'ensemble des données automorphes cuspidales $\mathfrak{X} = \mathfrak{X}^{\tilde{G}}$ est formé des $W^G_0$-orbites des paires $(M',\sigma)$ où $M' \in \mathcal{L}(M_0)$ et $\sigma$ est une représentation automorphe cuspidale de $(\tilde{M}')^1$. Il y a une application naturelle $\mathfrak{X}^{\tilde{M}} \to \mathfrak{X}^{\tilde{G}}$ à fibres finies. La théorie de la décomposition spectrale donne une décomposition orthogonale
$$ L^2(M(F) \backslash \tilde{M}^1) = \bigoplus_{\chi \in \mathfrak{X}} L^2(M(F) \backslash \tilde{M}^1)_\chi . $$
Précisons. Soit $\chi^M=[M',\sigma] \in \mathfrak{X}^{\tilde{M}}$. Grosso modo, $L^2(M(F) \backslash \tilde{M}^1)_{\chi^M}$ est le sous-espace obtenu en prenant les résidus des séries d'Eisenstein associées à $\sigma$, cf. \cite[VI]{MW94}. On définit
$$ L^2(M(F) \backslash \tilde{M}^1)_\chi := \bigoplus_{\chi^M \mapsto \chi} L^2(M(F) \backslash \tilde{M}^1)_{\chi^M}. $$

On définit ainsi les espaces
\begin{align*}
  \mathcal{A}^2(\tilde{P})_\chi & := \{ \phi \in \mathcal{A}^2(\tilde{P}) : \forall \tilde{x} \in \tilde{G}, \phi_{\tilde{x}} \in L^2(M(F) \backslash \tilde{M}^1)_\chi \}, \\
  \mathcal{A}^2(\tilde{P})_{\chi,\pi} & := \mathcal{A}^2(\tilde{P})_\pi \cap \mathcal{A}^2(\tilde{P})_{\chi}.
\end{align*}

En prenant les complétés hilbertiens dans $\overline{\mathcal{A}^2}(\tilde{P})$, on définit les espaces $\overline{\mathcal{A}^2}(\tilde{P})_\pi$, $\overline{\mathcal{A}^2}(\tilde{P})_{\chi,\pi}$, etc. Ce sont des sous-espaces invariants de $\mathcal{I}_{\tilde{P}}(\lambda)$ pour tout $\lambda \in \mathfrak{a}_{M,\C}^*$. On note les sous-représentations ainsi obtenues par $\mathcal{I}_{\tilde{P}}(\lambda)_\pi$, $\mathcal{I}_{\tilde{P}}(\lambda)_{\chi, \pi}$, etc. Enfin, il convient parfois de fixer un sous-ensemble fini $\Gamma \subset \Pi_\mathrm{unit}(\tilde{K})$, i.e. des $\tilde{K}$-types, et on introduit les sous-espaces $\mathcal{A}^2(\tilde{P})_{\chi, \pi, \Gamma}$, etc., qui sont engendrés par les vecteurs transformant selon les éléments de $\Gamma$. Ce sont des espaces vectoriels de dimension finie.

\paragraph{La formule du produit scalaire}
Pour $Y \in \mathfrak{a}_0$, notons comme d'habitude $Y_G$ la projection de $Y$ sur $\mathfrak{a}_G$. Introduisons la notation suivante
\begin{align*}
  \tilde{G}^Y & := \{\tilde{x} \in \tilde{G} : H_{\tilde{G}}(\tilde{x})=Y_G \}, \\
  (h|h')_{\tilde{G},Y} & := \int_{G(F) \backslash \tilde{G}^Y } h(\tilde{x}) \overline{h'(\tilde{x})} \dd \tilde{x}, \quad h,h' \in L^2(G(F) \backslash \tilde{G}^Y), \\
  (h|h')_{\tilde{G}} & := (h|h')_{\tilde{G},0}, \quad h,h' \in L^2(G(F) \backslash \tilde{G}^1).
\end{align*}

Soient $S,T \in \mathfrak{a}_0$ avec $d_0(T) \gg 0$. D'après une propriété générale des opérateurs de troncature¸ la série d'Eisenstein tronquée $\Lambda^T E(\phi, \lambda)$ restreinte à $G(F) \backslash \tilde{G}^S$ est de carré intégrable pour tout $\phi \in \mathcal{A}^2(\tilde{P})$. 

\begin{theorem}[cf. {\cite[\S 8]{Ar82-IP}}]\label{prop:prod-Eis-1}
  Fixons les données $\chi, \Gamma$ comme ci-dessus. Soit $P'=M'U'$ un autre sous-groupe parabolique standard de $G$. Alors il existe
  \begin{itemize}
    \item un sous-ensemble $W(\mathfrak{a}_M, \mathfrak{a}_{M'}, \chi)$ de $W(\mathfrak{a}_M, \mathfrak{a}_{M'})$;
    \item un sous-ensemble fini $\mathcal{E}\text{xp}$ de $\mathfrak{a}_0^*$;
  \end{itemize}
  tels que
  \begin{itemize}
    \item $\angles{X, \varpi_\alpha^\vee} \leq 0$ pour tout $X \in \mathcal{E}\text{xp}$ et tout $\alpha \in \Delta_0$;
    \item $0 \in \mathcal{E}\text{xp}$
  \end{itemize}
  et les propriétés suivantes soient vérifiées: soient $\phi \in \mathcal{A}^2(\tilde{P})_{\chi, \Gamma}$, $\phi' \in \mathcal{A}^2(\tilde{P}')_{\chi, \Gamma}$, $\lambda \in \mathfrak{a}_{M,\C}^*$ et $\lambda' \in \mathfrak{a}_{M',\C}^*$, on a un développement
  $$ (\Lambda^T E(\phi, \lambda) | \Lambda^T E(\phi', -\bar{\lambda}') )_{\tilde{G},T} = \sum_{X \in \mathcal{E}\text{xp}} q_X^{T,\tilde{G}}(\lambda, \lambda', \phi, \phi') e^{\angles{X,T}} $$
  avec
  $$ q_X^{T,\tilde{G}}(\lambda, \lambda', \phi, \phi') = \sum_{(t,t') \in W(\mathfrak{a}_M, \mathfrak{a}_{M'}, \chi)} p_{X,t,t'}^{T,\tilde{G}}(\lambda,\lambda',\phi,\phi') e^{\angles{t\lambda-t'\lambda', T}} $$
  où $p_{X,t,t'}^{T,\tilde{G}}(\lambda,\lambda', \phi, \phi')$ est un polynôme en $T$. En tant qu'une fonction en $(\lambda,\lambda')$, $p_{X,t,t'}^{T,\tilde{G}}(\cdot,\cdot, \phi,\phi')$ est méromorphe, elle est régulière lorsque $\lambda \in i\mathfrak{a}_M^*$, $\lambda' \in i\mathfrak{a}_{M'}^*$.
\end{theorem}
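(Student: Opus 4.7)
Le plan est de suivre l'argument d'Arthur \cite{Ar82-IP}, qui se transpose sans modification essentielle au cadre des revêtements grâce aux ingrédients déjà mentionnés dans l'introduction: décomposition spectrale \cite{MW94}, formule du terme constant de Langlands pour les séries d'Eisenstein, et combinatoire des $(G,M)$-familles. Tous ces outils sont disponibles pour les revêtements adéliques sans changement.

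Premièrement, on exploite le fait que, pour $d_0(T) \gg 0$, l'opérateur $\Lambda^T$ est essentiellement idempotent et autoadjoint sur l'espace des formes automorphes de croissance modérée, d'où
$$ (\Lambda^T E(\phi, \lambda) | \Lambda^T E(\phi', -\bar{\lambda}'))_{\tilde{G},T} = (\Lambda^T E(\phi, \lambda) | E(\phi', -\bar{\lambda}'))_{\tilde{G},T}. $$
L'intégrale de droite converge absolument grâce à la décroissance rapide de $\Lambda^T E(\phi,\lambda)$. Supposant d'abord $\Re\lambda'$ suffisamment loin pour que la série d'Eisenstein $E(\phi', -\bar{\lambda}')$ converge absolument, on la déplie via la décomposition de Bruhat: l'intégrale sur $G(F) \backslash \tilde{G}^T$ devient une intégrale sur $U'(\A) M'(F) A_{M',\infty} \backslash \tilde{G}^T$ faisant intervenir le terme constant $(\Lambda^T E(\phi, \lambda))_{P'}$, puis on prolonge méromorphiquement en $\lambda'$.

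Le cœur technique réside dans le calcul de ce terme constant. On développe $(\Lambda^T E(\phi, \lambda))_{P'}(\tilde{x})$ comme une somme sur les paraboliques $P_1 \supset P_0$ avec $M_1 \subset M'$, faisant apparaître $\hat{\tau}_{P_1}^{P'}$ et, via la formule de Langlands, les contributions $M(t,\lambda)\phi \cdot e^{\angles{t\lambda, H_{M_1}(\cdot)}}$ pour $t \in W(\mathfrak{a}_M, \mathfrak{a}_{M_1})$. Après substitution et intégration sur $A_{M',\infty} \cap \tilde{G}^T$ contre $\hat{\tau}_{P'}$, les intégrales de cône dans $\mathfrak{a}_{M'}^G$ s'évaluent en fonctions rationnelles en $(\lambda, \lambda')$ multipliées par des exponentielles $e^{\angles{t\lambda - t'\lambda', T}}$, les exposants étant indexés par les couples $(t,t') \in W(\mathfrak{a}_M, \mathfrak{a}_{M'})^2$ compatibles. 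Lorsque $\chi$ est cuspidal, ceci donne directement l'énoncé avec $\mathcal{E}\text{xp} = \{0\}$.

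Pour le cas général, on utilise la construction résiduelle de $L^2(M(F) \backslash \tilde{M}^1)_\chi$ à partir de séries d'Eisenstein issues de Levi plus petits \cite[VI]{MW94}. Les résidus successifs introduisent les exposants additionnels $X \in \mathcal{E}\text{xp}$, localisés dans la chambre négative par construction (d'où $\angles{X, \varpi_\alpha^\vee} \leq 0$); le sous-ensemble $W(\mathfrak{a}_M, \mathfrak{a}_{M'}, \chi) \subset W(\mathfrak{a}_M, \mathfrak{a}_{M'})$ correspond aux éléments de Weyl compatibles avec la classe $\chi$. La régularité des polynômes $p^{T,\tilde{G}}_{X,t,t'}(\cdot,\cdot,\phi,\phi')$ sur $i\mathfrak{a}_M^* \times i\mathfrak{a}_{M'}^*$ provient enfin de l'holomorphie et de l'unitarité des opérateurs d'entrelacement $M(w,\lambda)$ sur l'axe imaginaire \cite[VI.2]{MW94}. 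L'obstacle principal sera le calcul combinatoire du terme constant de la série d'Eisenstein tronquée et la gestion rigoureuse des résidus dans le cas non cuspidal; comme annoncé dans l'introduction, nous n'en reprendrons pas le détail, ces arguments étant valables pour les revêtements sans changement.
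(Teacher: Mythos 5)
Votre démarche coïncide pour l'essentiel avec celle du texte : comme le papier, vous renvoyez à l'argument d'Arthur dans \cite{Ar82-IP} en identifiant les mêmes trois ingrédients (théorie de base des termes constants et opérateurs d'entrelacement, cas cuspidal dû à Langlands donnant $\mathcal{E}\text{xp}=\{0\}$, puis passage au cas général par la construction résiduelle du spectre discret de \cite{MW94}), et vous constatez comme lui que ces ingrédients sont disponibles pour les revêtements. Votre esquisse est simplement un peu plus détaillée (auto-adjonction et idempotence de $\Lambda^T$, dépliage par Bruhat, calcul du terme constant), mais la stratégie et le niveau de délégation aux références sont les mêmes.
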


Vu les propriétés de $\mathcal{E}\text{xp}$, le comportement asymptotique de $(\Lambda^T E(\phi, \lambda) | \Lambda^T E(\phi', -\bar{\lambda}') )_{\tilde{G},T}$ en $T$ est contrôlé par le terme $q_0^{T,\tilde{G}}(\lambda,\lambda',\phi,\phi')$. Une description beaucoup plus précise est donnée ci-dessous. Posons
\begin{equation}
  \omega^T(\lambda, \lambda',\phi,\phi') := \sum_{\substack{P_1 \supset P_0 \\ P_1 = M_1 U_1}} \sum_{\substack{t \in W(\mathfrak{a}_M|\mathfrak{a}_{M_1}) \\ t' \in W(\mathfrak{a}_{M'}|\mathfrak{a}_{M_1})}} (M(t,\lambda)\phi | M(t', -\overline{\lambda'})\phi') \frac{e^{\angles{t\lambda-t'\lambda', T}}}{\theta_{P_1}(t\lambda - t'\lambda')}.
\end{equation}
Voir \cite[\S 4.2]{Li10a} pour les définitions de $\theta_{P_1}$. Fixons aussi $\delta, N > 0$ avec $N$ suffisamment grand, et posons
\begin{align}
  \mathcal{T} := \{T \in \mathfrak{a}_0^+ : d_0(T) > \delta\|T\| > N \}.
\end{align}

\begin{theorem}[{\cite[Theorem 9.1]{Ar82-IP}}]\label{prop:prod-Eis-2}
  Conservons les notations précédentes.
  \begin{enumerate}
    \item On a
      $$ q_0^{T,\tilde{G}}(\lambda,\lambda',\phi,\phi') = \omega^T(\lambda, \lambda',\phi,\phi').$$
    \item Il existe $\epsilon > 0$ et une fonction localement bornée $\rho: i\mathfrak{a}_M^* \times i\mathfrak{a}_{M'}^* \to \R_{\geq 0}$, tels que
      $$ |(\Lambda^T E(\phi, \lambda) | \Lambda^T E(\phi', \lambda') )_{\tilde{G},T} - \omega^T(\lambda,\lambda',\phi,\phi')| \leq \rho(\lambda,\lambda') \|\phi\| \|\phi'\| e^{-\epsilon \|T\|} $$
      pour $\phi$, $\phi'$ comme dans le Théorème \ref{prop:prod-Eis-1}, $T \in \mathcal{T}$ et $\lambda \in i\mathfrak{a}_M^*$, $\lambda' \in i\mathfrak{a}_{M'}^*$.
  \end{enumerate}
\end{theorem}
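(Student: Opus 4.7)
Le plan est de suivre intégralement \cite[Theorem 9.1]{Ar82-IP}; le cadre revêtement n'introduit aucune complication nouvelle puisque les outils sous-jacents --- opérateurs d'entrelacement globaux, formule des termes constants des séries d'Eisenstein, décomposition spectrale de \cite[VI]{MW94} --- se transposent sans modification à $\tilde G$. Le Théorème \ref{prop:prod-Eis-1}, que nous admettons, fournit déjà le développement
$$ (\Lambda^T E(\phi,\lambda) \mid \Lambda^T E(\phi',-\bar\lambda'))_{\tilde G, T} = \sum_{X \in \mathcal{E}\text{xp}} q_X^{T,\tilde G}(\lambda,\lambda',\phi,\phi')\, e^{\angles{X,T}}; $$
il restera à identifier $q_0^{T,\tilde G}$ avec $\omega^T$ (partie (i)) puis à contrôler la somme sur $X \neq 0$ (partie (ii)).

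Pour (i), j'exploiterai que $\Lambda^T$ est essentiellement auto-adjoint et presque idempotent, ce qui donnera
$$ (\Lambda^T E(\phi,\lambda) \mid \Lambda^T E(\phi',-\bar\lambda'))_{\tilde G,T} = (E(\phi,\lambda) \mid \Lambda^T E(\phi',-\bar\lambda'))_{\tilde G,T} $$
modulo un terme à décroissance rapide en $T$. L'égalité se lira d'abord dans la chambre de convergence absolue de $E(\phi,\lambda)$, puis se prolongera par continuation méromorphe. Déroulant la somme $\sum_{\gamma \in P(F) \backslash G(F)}$ de la définition de $E(\phi,\lambda)$, l'intégrale deviendra une intégrale sur $P(F) \backslash \tilde G^T$; l'intégration sur $U(F) \backslash U(\A)$ fera apparaître le terme constant de $\Lambda^T E(\phi',-\bar\lambda')$ le long de $\tilde P$. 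On l'explicitera grâce à la définition combinatoire de $\Lambda^T$ (somme alternée indexée par les paraboliques standards $P_1$) et à la formule classique du terme constant des séries d'Eisenstein, qui fait intervenir les opérateurs $M(t',-\bar\lambda')$ pour $t' \in W(\mathfrak{a}_{M'}|\mathfrak{a}_{M_1})$. Après réorganisation, le calcul se ramènera à des intégrales résiduelles sur $\mathfrak{a}_{P_1}^G$ du type
$$ \int_{\mathfrak{a}_{P_1}^G} \hat\tau_{P_1}(H - T)\, e^{\angles{\mu, H}}\, \dd H = \frac{e^{\angles{\mu, T}}}{\theta_{P_1}(-\mu)} $$
(valable pour $\Re\mu$ dans un cône approprié et prolongée méromorphiquement), produisant exactement les dénominateurs $\theta_{P_1}(t\lambda - t'\lambda')$ et les exponentielles $e^{\angles{t\lambda - t'\lambda', T}}$ de la définition de $\omega^T$.

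La partie (ii) découlera alors de (i) et des propriétés de $\mathcal{E}\text{xp}$ énoncées dans le Théorème \ref{prop:prod-Eis-1}. Les polynômes $p_{X,t,t'}^{T,\tilde G}(\cdot,\cdot,\phi,\phi')$ étant réguliers sur $i\mathfrak{a}_M^* \times i\mathfrak{a}_{M'}^*$, ils y sont localement bornés, et leur croissance en $T$ est polynômiale. Pour $X \neq 0$, la condition $\angles{X, \varpi_\alpha^\vee} \leq 0$ (stricte pour au moins un $\alpha$) combinée à la régularité de $T$ imposée par $T \in \mathcal T$ fournira un $\epsilon_X > 0$ tel que $e^{\angles{X,T}} \leq e^{-2\epsilon_X \|T\|}$. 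L'ensemble $\mathcal{E}\text{xp}$ étant fini, la décroissance exponentielle absorbera le facteur polynômial, d'où la majoration cherchée avec un $\rho(\lambda,\lambda')$ localement borné.

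L'obstacle principal se situera dans l'étape d'unfolding de (i): échange d'intégrales et de sommes, continuation méromorphe hors de la chambre de convergence, et justification rigoureuse de la presque-idempotence de $\Lambda^T$. Arthur y consacre plusieurs sections techniques \cite[\S\S 4--8]{Ar82-IP} dans le cas réductif, et --- comme l'a déjà souligné l'introduction --- leur rédaction complète fait défaut même dans \cite{MW94}. L'adaptation aux revêtements sera quant à elle routinière, consistant à vérifier que les objets conservent leurs propriétés structurelles, ce qui est assuré par \cite[VI]{MW94}.
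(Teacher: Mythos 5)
Votre plan coïncide pour l'essentiel avec celui du texte, qui ne donne d'ailleurs aucune démonstration : il se borne à renvoyer à \cite[Theorem 9.1]{Ar82-IP} en énumérant les ingrédients nécessaires --- (i) théorie de base des séries d'Eisenstein et de leurs termes constants, (ii) le cas cuspidal traité par Langlands, (iii) la construction du spectre discret par résidus --- et à observer que (i) et (iii) sont disponibles pour les revêtements grâce à \cite{MW94}. Votre esquisse est plus détaillée et correcte dans ses grandes lignes (quasi-idempotence et auto-adjonction de $\Lambda^T$, déroulement, intégrale combinatoire produisant les $\theta_{P_1}$, puis déduction de (ii) à partir de (i) et de la finitude de $\mathcal{E}\text{xp}$). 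Signalez toutefois que le calcul que vous décrivez est en réalité celui du cas \emph{cuspidal} : la formule du terme constant le long de $\tilde{P}_1$ que vous invoquez, somme sur $t' \in W(\mathfrak{a}_{M'}|\mathfrak{a}_{M_1})$ des $M(t',-\bar{\lambda}')\phi'$ affectés d'exponentielles, n'est valable sous cette forme que pour $\phi' \in \mathcal{A}^2_{\text{cusp}}(\tilde{P}')$ (c'est le calcul de Langlands, \cite[Lemma 4.2]{Ar80}). Pour $\phi'$ général dans $\mathcal{A}^2(\tilde{P}')$, attaché à des données résiduelles, le terme constant comporte des exposants supplémentaires et l'argument d'Arthur ne procède pas par déroulement direct : on réalise $\phi'$ comme résidu itéré de séries d'Eisenstein cuspidales issues d'un Lévi plus petit et on prend les résidus correspondants dans la formule cuspidale du produit scalaire. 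C'est exactement l'ingrédient (iii) que le texte met en avant et que votre plan n'identifie pas, même si vous reconnaissez honnêtement que les sections techniques de \cite{Ar82-IP} restent l'obstacle principal. Enfin, pour la partie (ii), la stricte négativité d'au moins un $\angles{X,\varpi_\alpha^\vee}$ lorsque $X \neq 0$ n'est pas affirmée telle quelle par le Théorème \ref{prop:prod-Eis-1} ; il faut noter que les exposants ne comptent que modulo $\mathfrak{a}_G^*$ (on intègre sur $G(F)\backslash\tilde{G}^T$) et utiliser la régularité de $T \in \mathcal{T}$ pour convertir cette négativité en une décroissance en $e^{-\epsilon\|T\|}$.
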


\begin{corollary}\label{prop:prod-Eis-3}
  Il existe $\epsilon > 0$ et une fonction localement bornée $\rho: i\mathfrak{a}_M^* \to \R_{\geq 0}$ tels que
  $$ |(\Lambda^T E(\phi, \lambda) | \Lambda^T E(\phi', \lambda) )_{\tilde{G}} - \omega^T(\lambda,\lambda,\phi,\phi')| \leq \rho(\lambda) \|\phi\| \|\phi'\| e^{-\epsilon \|T\|} $$
  pour $\phi$, $\phi'$ comme dans le Théorème \ref{prop:prod-Eis-1}, $T \in \mathcal{T}$ et $\lambda \in i\mathfrak{a}_M^*$.
\end{corollary}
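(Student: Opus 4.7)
Le plan est de déduire ce corollaire directement du Théorème \ref{prop:prod-Eis-2}(2): il suffit de remplacer le produit scalaire $(\cdot|\cdot)_{\tilde{G},T}$ intégré sur $G(F) \backslash \tilde{G}^T$ par celui intégré sur $G(F) \backslash \tilde{G}^1$. Je commencerai par établir que, pour $\lambda \in i\mathfrak{a}_M^*$, le produit $\Lambda^T E(\phi,\lambda) \cdot \overline{\Lambda^T E(\phi',\lambda)}$ est invariant par translation à droite sous $A_{G,\infty}$; puis j'utiliserai cette invariance pour identifier les deux intégrales.

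Pour la première étape, l'observation clé est que $E(\phi,\lambda)$ se transforme sous translation à droite par $a \in A_{G,\infty}$ par le caractère $e^{\angles{\lambda, H_G(a)}}$, qui est unitaire pour $\lambda \in i\mathfrak{a}_M^*$. Cela se vérifie directement à partir de la définition: puisque $A_{G,\infty}$ est central dans $G$ et contenu dans $A_{M,\infty}$, et puisque $\phi \in \mathcal{A}^2(\tilde{P})$ est invariante à gauche sous $A_{M,\infty}$, chaque terme $\phi(\gamma\tilde{x}a)$ dans la somme définissant $E$ est égal à $\phi(\gamma\tilde{x})$; et l'identité $H_P(\gamma xa) = H_P(\gamma x) + H_G(a)$ produit le caractère annoncé. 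Cette propriété se transporte à la série tronquée $\Lambda^T E(\phi,\lambda)$, car l'opérateur $\Lambda^T$, défini en termes de termes constants et de fonctions caractéristiques sur $\mathfrak{a}_0/\mathfrak{a}_G$, commute avec l'action de $A_{G,\infty}$ par translation à droite. Le produit de deux telles fonctions tronquées pour le même paramètre $\lambda$ est alors $A_{G,\infty}$-invariant.

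Pour la seconde étape, tout élément de $A_{G,\infty}$ se projetant sur $T_G \in \mathfrak{a}_G$ fournit une bijection $G(F)$-équivariante préservant la mesure entre $\tilde{G}^1$ et $\tilde{G}^T$. L'invariance établie ci-dessus donne donc
$$ (\Lambda^T E(\phi,\lambda) | \Lambda^T E(\phi',\lambda))_{\tilde{G}} = (\Lambda^T E(\phi,\lambda) | \Lambda^T E(\phi',\lambda))_{\tilde{G},T}. $$
Le corollaire résulte alors en appliquant le Théorème \ref{prop:prod-Eis-2}(2) avec $\lambda'=\lambda$ et en définissant $\rho(\lambda) := \rho(\lambda,\lambda)$, qui reste localement bornée sur $i\mathfrak{a}_M^*$ car la restriction à la diagonale d'une fonction localement bornée sur $i\mathfrak{a}_M^* \times i\mathfrak{a}_M^*$ l'est aussi. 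Il n'y a pas d'obstacle réel dans cette déduction: toute la substance analytique est concentrée dans le Théorème \ref{prop:prod-Eis-2}, admis ici d'après \cite{Ar82-IP}, et la vérification de l'équivariance par $A_{G,\infty}$ est une simple manipulation formelle sur la définition de la série d'Eisenstein.
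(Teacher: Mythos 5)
Votre démonstration est correcte et suit exactement la voie que le texte sous-entend : le corollaire est la spécialisation $\lambda'=\lambda$ du Théorème \ref{prop:prod-Eis-2}(2), jointe à l'identité $(\Lambda^T E(\phi,\lambda)\,|\,\Lambda^T E(\phi',\lambda))_{\tilde{G}} = (\Lambda^T E(\phi,\lambda)\,|\,\Lambda^T E(\phi',\lambda))_{\tilde{G},T}$, que vous justifiez correctement par l'équivariance sous $A_{G,\infty}$ (le caractère $e^{\angles{\lambda,H_G(\cdot)}}$ est unitaire pour $\lambda\in i\mathfrak{a}_M^*$, $\delta_P$ vaut $1$ sur $A_{G,\infty}$, et $\Lambda^T$ commute à la translation à droite par $A_{G,\infty}$ puisque les $\varpi_\alpha$ s'annulent sur $\mathfrak{a}_G$). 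Le texte ne détaille pas cette réduction et renvoie globalement à \cite{Ar82-IP}; votre argument en est le complément attendu.
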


\begin{proof}[Ingrédients pour les démonstrations]
  Les outils essentiels pour ces résultats principaux de \cite{Ar82-IP} sont
  \begin{enumerate}\renewcommand{\labelenumi}{(\roman{enumi})}
    \item la théorie de base des séries d'Eisenstein, leurs termes constants, les opérateurs d'entrelacement, etc;
    \item le cas $\phi \in \mathcal{A}^2_\text{cusp}(\tilde{P})_{\chi,\Gamma}$, $\phi' \in \mathcal{A}^2_\text{cusp}(\tilde{P}')_{\chi,\Gamma}$ traité par Langlands, cf. \cite[Lemma 4.2]{Ar80} et \cite[\S 9]{Lan66};
    \item la construction du spectre discret par résidus.
  \end{enumerate}\renewcommand{\labelenumi}{(\arabic{enumi})}

  En fait, les parties profondes (i) et (iii) pour les revêtements sont traitées dans \cite{MW94}, tandis que (ii) est plus élémentaire et s'appuie sur (i), pour l'essentiel.
\end{proof}

\section{Étude asymptotique du côté spectral}\label{sec:asymptotique}
\paragraph{Majorations}
Dans cette section, on fixe $\chi \in \mathfrak{X}^{\tilde{G}}$. Le symbole $T$ désignera toujours un élément dans $\mathfrak{a}_0$. Dans \cite{Li10a}, nous avons défini la distribution $f \mapsto J^T_\chi(f)$ où $f \in C_c^\infty(\tilde{G})$. C'est un polynôme en $T$.

Fixons $P=MU$ et $\pi \in \Pi_\mathrm{unit}(\tilde{M})$. En supposant que $d_0(T) \gg 0$, on définit un opérateur
$$ \Omega^T_{\chi,\pi}(\tilde{P},\lambda): \mathcal{A}^2(\tilde{P})_{\chi,\pi} \to \mathcal{A}^2(\tilde{P})_{\chi,\pi}, \quad \lambda \in i\mathfrak{a}_M^* $$
par la formule suivante
$$ (\Omega^T_{\chi,\pi}(\tilde{P},\lambda)\phi | \phi') = (\Lambda^T E(\phi,\lambda) | \Lambda^T E(\phi',\lambda))_{\tilde{G}}, \quad \phi,\phi' \in \mathcal{A}^2(\tilde{P})_{\chi,\pi}. $$

On a aussi démontré au cours d'établir la formule des traces grossière que $\Omega^T_{\chi,\pi}(\tilde{P},\lambda) \mathcal{I}_{\tilde{P}}(\lambda,f)_{\chi,\pi}$ est un opérateur à trace (voir la Définition \ref{def:trace}). C'est donc loisible de poser
\begin{equation}\label{eqn:Psi}
  \Psi^T_{\chi,\pi}(\lambda,f) := |\mathcal{P}(M)|^{-1} \Tr(\Omega^T_{\chi,\pi}(\tilde{P},\lambda) \mathcal{I}_{\tilde{P}}(\lambda,f)_{\chi,\pi}).
\end{equation}
On l'écrira parfois $\Psi^T_{\pi}(\lambda,f)$. La fonction $\Psi^T_{\chi,\pi}(\lambda,f)$ n'est pas déterminée par $\pi|_{\tilde{M}^1}$, cependant son intégrale sur $\lambda \in i(\mathfrak{a}^G_M)^*$ l'est; on sait que la somme sur $\pi$ de ces intégrales est égale à $J_\chi^T(f)$ lorsque $d_0(T) \gg 0$. Nous devons préciser les phrases ``$d_0(T) \gg 0$'' dans la suite.

\begin{proposition}[cf. {\cite[Proposition 2.1]{Ar82-Eis1}}]\label{prop:majoration-1}
  Il existe des entiers positifs $C_0, d_0$ tels que pour tous $f \in C_c^\infty(\tilde{G}^1)$, $n \geq 0$ et $T \in \mathfrak{a}_0$ avec $d_0(T) > C_0$, il existe une constante $c_{n,f}$ indépendante de $T$ vérifiant
  $$ \sum_{P=MU \supset P_0} \sum_{\pi \in \Pi_\mathrm{unit}(\tilde{M}^1)} \int_{i(\mathfrak{a}^G_M)^*} |\Psi^T_{\chi,\pi}(\lambda,f)| (1+\|\lambda\|)^n \dd\lambda \leq c_{n,f} (1+\|T\|)^{d_0}. $$
\end{proposition}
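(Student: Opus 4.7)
La stratégie consiste à majorer $\Psi^T_{\chi,\pi}(\lambda,f)$ via l'inégalité évidente $|\Psi^T_{\chi,\pi}(\lambda,f)| \leq |\mathcal{P}(M)|^{-1} \|\Omega^T_{\chi,\pi}(\tilde{P},\lambda)\|_{\mathrm{op}} \cdot \|\mathcal{I}_{\tilde{P}}(\lambda,f)_{\chi,\pi}\|_{\mathrm{tr}}$, puis à contrôler chaque facteur séparément, avant de sommer et d'intégrer en utilisant la majoration fine du côté spectral grossier.

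\textbf{Première étape — borne polynomiale pour $\Omega^T$.} D'après le Corollaire \ref{prop:prod-Eis-3}, les coefficients matriciels $(\Omega^T_{\chi,\pi}(\tilde{P},\lambda)\phi|\phi')$ diffèrent de $\omega^T(\lambda,\lambda,\phi,\phi')$ par $\rho(\lambda)\|\phi\|\|\phi'\|e^{-\epsilon\|T\|}$ pour un $\epsilon > 0$ et $\rho$ localement bornée. Pour le terme principal, on regroupe la somme définissant $\omega^T$ selon le Lévi $M_1$: chaque regroupement exhibe une $(G,M_1)$-famille en $\mu = t\lambda - t'\lambda$, dont les singularités apparentes de $\theta_{P_1}(\mu)$ se compensent, l'évaluation étant un polynôme en $T$ de degré au plus $d_0 := \dim \mathfrak{a}^G_0$. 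L'unitarité des $M(t,\lambda)$ sur $\overline{\mathcal{A}^2}(\tilde{P})$ pour $\lambda \in i\mathfrak{a}_M^*$ (voir \cite[VI.2]{MW94}) entraîne que les coefficients de ce polynôme ont une norme d'opérateur majorée par une fonction localement bornée $\rho_0(\lambda)$, uniformément en $\pi$ et $\chi$. Quitte à absorber le terme d'erreur exponentiellement petit, on trouve $C_0$ tel que pour $d_0(T) > C_0$,
\begin{equation*}
  \|\Omega^T_{\chi,\pi}(\tilde{P},\lambda)\|_{\mathrm{op}} \leq \rho_0(\lambda)(1+\|T\|)^{d_0}.
\end{equation*}

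\textbf{Deuxième étape — sommabilité de $\mathcal{I}_{\tilde{P}}(\lambda,f)_{\chi,\pi}$.} Les majorations fines d'Arthur \cite[Appendix]{Ar82-Eis1} (ingrédient (2) de l'introduction), qui s'appuient sur des estimations de type Sobolev pour $f$ et la théorie de base des séries d'Eisenstein, fournissent pour tout $n \geq 0$ et tout $f \in C_c^\infty(\tilde{G}^1)$
\begin{equation*}
  \sum_{P \supset P_0} \sum_{\pi \in \Pi_\mathrm{unit}(\tilde{M}^1)} \int_{i(\mathfrak{a}^G_M)^*} \rho_0(\lambda)(1+\|\lambda\|)^n \|\mathcal{I}_{\tilde{P}}(\lambda,f)_{\chi,\pi}\|_{\mathrm{tr}} \dd\lambda < +\infty.
\end{equation*}
Le passage aux revêtements repose sur la théorie de la décomposition spectrale de \cite{MW94}, applicable sans modification, et ne présente pas de difficulté additionnelle.

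\textbf{Combinaison et obstacle principal.} En multipliant les deux bornes, en sommant sur $P$ et $\pi$ puis en intégrant en $\lambda$, on obtient l'estimation cherchée avec $c_{n,f}$ égal à la somme convergente ci-dessus, manifestement indépendante de $T$. L'obstacle principal se situe dans la première étape: il faut convertir l'asymptotique matricielle du Théorème \ref{prop:prod-Eis-2} en une borne d'opérateur uniforme en $\pi$ et polynomiale en $T$ de degré contrôlé. La compensation des singularités de $\theta_{P_1}$ via la machinerie des $(G,M)$-familles et l'unitarité des opérateurs d'entrelacement sur l'axe imaginaire sont toutes deux indispensables, mais déjà établies dans le cadre des revêtements grâce à \cite{MW94} et \cite{Li11a}.
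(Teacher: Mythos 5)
Votre démonstration ne suit pas la route du texte et comporte une lacune essentielle. Le papier ne démontre pas cette proposition par la formule du produit scalaire~: il renvoie à l'appendice de \cite{Ar82-Eis1}, dont le mécanisme est tout autre et « élémentaire modulo la décomposition spectrale ». L'argument d'Arthur exploite la \emph{positivité} de l'opérateur $\Omega^T_{\chi,\pi}(\tilde{P},\lambda)$ (c'est une matrice de Gram)~: pour $f = h * h^*$ chaque terme $\Tr(\Omega^T_{\chi,\pi}(\tilde{P},\lambda)\mathcal{I}_{\tilde{P}}(\lambda,f)_{\chi,\pi})$ est $\geq 0$, de sorte que la somme sur $P$, $\pi$ (et $\chi$) et l'intégrale en $\lambda$ se majorent d'un coup par l'intégrale diagonale du noyau doublement tronqué, elle-même polynomiale en $T$~; le poids $(1+\|\lambda\|)^n$ s'obtient en remplaçant $f$ par son image sous des puissances de $(1+\text{Casimir})$, et le cas général se ramène à des combinaisons de convolutions $h * h^*$. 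Les Théorèmes \ref{prop:prod-Eis-1} et \ref{prop:prod-Eis-2} n'interviennent que plus tard (pour le Théorème \ref{prop:P^T(B)-asymp}), pas ici.

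La lacune concrète est double. (i) Votre deuxième étape affirme la convergence de $\sum_{P,\pi}\int \rho_0(\lambda)(1+\|\lambda\|)^n \|\mathcal{I}_{\tilde{P}}(\lambda,f)_{\chi,\pi}\|_1 \dd\lambda$ en citant précisément l'appendice où la proposition est démontrée~: c'est circulaire, et cette sommabilité en norme de traces avec poids polynomial arbitraire n'est pas une conséquence « de type Sobolev »~; combinée à votre borne sur $\|\Omega^T\|_{\mathrm{op}}$ elle donnerait immédiatement l'énoncé, donc elle en porte toute la difficulté. (ii) La première étape surinterprète le Théorème \ref{prop:prod-Eis-2}~: la fonction $\rho$ y est seulement \emph{localement} bornée en $\lambda$ (donc inutilisable telle quelle dans une intégrale sur le domaine non compact $i(\mathfrak{a}^G_M)^*$ contre $(1+\|\lambda\|)^n$), elle dépend du choix des $\tilde{K}$-types $\Gamma$ (l'estimée ne porte que sur le sous-espace de dimension finie $\mathcal{A}^2(\tilde{P})_{\chi,\Gamma}$, et aucune uniformité en $\Gamma$ n'est fournie, donc on n'obtient pas de borne de norme d'opérateur sur $\overline{\mathcal{A}^2}(\tilde{P})_{\chi,\pi}$ tout entier), et elle n'est valable que pour $T$ dans le cône $\mathcal{T}$. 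L'uniformité « en $\pi$ et $\chi$ » que vous invoquez n'est établie nulle part. Il faut donc revenir à l'argument de positivité de l'appendice d'Arthur plutôt que de passer par le produit scalaire des séries d'Eisenstein tronquées.
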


Choisissons une fonction hauteur $\|\cdot\|: \tilde{G} \to \R_{\geq 0}$ comme dans \cite{Li10a}. Soit $N \in \R$, posons
$$ C_N^\infty(\tilde{G}) := \{ f \in C_c^\infty(\tilde{G}) : f(\tilde{x})=0 \text{ si } \log\|\tilde{x}\| > N \}. $$
Idem pour $C_N^\infty(\tilde{G}^1)$.

\begin{proposition}[cf. {\cite[Proposition 2.2]{Ar82-Eis1}}]\label{prop:majoration-2}
  Il existe une constante $C_0 > 0$ telle que pour tous $N > 0$, $f \in C_N^\infty(\tilde{G}^1)$, $T \in \mathfrak{a}_0$ tel que $d_0(T) > C_0(1+N)$, on a
  $$ J^T_\chi(f) = \sum_{P=MU \supset P_0} \sum_{\pi \in \Pi_\mathrm{unit}(\tilde{M}^1)}\; \int_{i(\mathfrak{a}^G_M)^*} \Psi^T_{\chi,\pi}(\lambda,f) \dd\lambda . $$
  En particulier, le côté à droite est un polynôme en $T$.
\end{proposition}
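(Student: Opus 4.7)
Le plan est d'adapter au cadre des revêtements l'argument d'Arthur \cite[Proposition 2.2]{Ar82-Eis1}. Le point de départ est la définition donnée dans \cite{Li10a}:
$$ J^T_\chi(f) = \int_{G(F) \backslash \tilde{G}^1} \Lambda^T K_\chi(\tilde{x}, \tilde{x}) \dd\tilde{x}, $$
où $K_\chi$ désigne la composante $\chi$-isotypique du noyau de convolution à droite par $f$ sur $L^2(G(F)\backslash\tilde{G}^1)$, et où $\Lambda^T$ agit sur l'une des deux variables avant la prise de la diagonale. La théorie de la décomposition spectrale \cite[VI]{MW94} fournit
$$ K_\chi(\tilde{x}, \tilde{y}) = \sum_{P \supset P_0} |\mathcal{P}(M)|^{-1} \sum_{\pi \in \Pi_\mathrm{unit}(\tilde{M}^1)} \int_{i(\mathfrak{a}_M^G)^*} \sum_\phi E(\tilde{x}, \mathcal{I}_{\tilde{P}}(\lambda,f)\phi, \lambda) \overline{E(\tilde{y}, \phi, \lambda)} \dd\lambda, $$
où la somme interne parcourt une base orthonormale de $\mathcal{A}^2(\tilde{P})_{\chi, \pi}$.

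La partie centrale du calcul consistera à exploiter, dans chaque terme spectral, l'auto-adjonction et la quasi-idempotence de $\Lambda^T$: pour $d_0(T)$ suffisamment grand devant le support de $f$, on aura
$$ \int_{G(F)\backslash\tilde{G}^1} \Lambda^T E(\mathcal{I}_{\tilde{P}}(\lambda,f)\phi, \lambda)(\tilde{x}) \; \overline{E(\tilde{x}, \phi, \lambda)} \dd\tilde{x} = \bigl(\Lambda^T E(\mathcal{I}_{\tilde{P}}(\lambda,f)\phi, \lambda) \bigm| \Lambda^T E(\phi, \lambda)\bigr)_{\tilde{G}}. $$
En sommant sur $\phi$ dans la base orthonormale, on reconnaît, par définition de $\Omega^T_{\chi,\pi}(\tilde{P},\lambda)$, la trace de $\Omega^T_{\chi,\pi}(\tilde{P},\lambda) \mathcal{I}_{\tilde{P}}(\lambda,f)_{\chi,\pi}$; cette trace vaut précisément $|\mathcal{P}(M)| \cdot \Psi^T_{\chi,\pi}(\lambda, f)$ d'après \eqref{eqn:Psi}. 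La sommation sur $(P, \pi)$ et l'intégration sur $\lambda$ fourniront alors l'identité cherchée.

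L'obstacle essentiel sera la justification des interversions de sommes et d'intégrales tout au long de ce calcul formel, ainsi que l'usage de l'auto-adjonction de $\Lambda^T$. C'est la Proposition \ref{prop:majoration-1} qui fournit la majoration absolue nécessaire à ces interversions, pourvu que $d_0(T) > C_0$. La condition renforcée $d_0(T) > C_0(1+N)$ intervient pour rendre légitimes les manipulations elles-mêmes: elle assure, par les estimations géométriques classiques sur le domaine de Siegel, que le noyau de convolution par $f \in C_N^\infty(\tilde{G}^1)$ et ses termes constants le long des paraboliques se simplifient comme prévu dans le domaine de troncature, autrement dit que $\Lambda^T$ opère bien comme un projecteur sur la classe des fonctions considérées. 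La polynomialité finale en $T$ du membre de droite s'ensuit immédiatement: chaque $\Psi^T_{\chi,\pi}(\lambda, f)$ est polynomial en $T$ par construction de $\Omega^T_{\chi,\pi}(\tilde{P}, \lambda)$, et la convergence absolue établie par la Proposition \ref{prop:majoration-1} permet d'échanger intégration et somme finie des monômes en $T$.
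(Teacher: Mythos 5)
Votre esquisse suit essentiellement la même voie que l'article, dont la démonstration se borne à renvoyer aux arguments de l'appendice de \cite{Ar82-Eis1} : décomposition spectrale du noyau $K_\chi$, auto-adjonction et idempotence de $\Lambda^T$ pour reconnaître $\Tr(\Omega^T_{\chi,\pi}(\tilde{P},\lambda)\mathcal{I}_{\tilde{P}}(\lambda,f)_{\chi,\pi})$, puis interversions justifiées par la Proposition \ref{prop:majoration-1}. Signalons seulement que l'identité $J^T_\chi(f)=\int_{G(F)\backslash\tilde{G}^1}\Lambda^T K_\chi(\tilde{x},\tilde{x})\dd\tilde{x}$ n'est pas la définition de $J^T_\chi$ (défini via le noyau modifié dans la formule des traces grossière) mais un théorème à la \cite[Theorem 2.1]{Ar82-Eis1}, et c'est précisément là qu'intervient la condition $d_0(T)>C_0(1+N)$.
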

Nous employons la même lettre $C_0$ pour la constante car il n'y aura aucune confusion à craindre.
\begin{proof}
  Il suffit de reprendre les arguments dans \cite[Appendix]{Ar82-Eis1}, qui sont élémentaires modulo la décomposition spectrale.
\end{proof}

Introduisons maintenant les espaces
\begin{align*}
  \mathcal{H}(\tilde{G}) & := \{f \in C_c^\infty(\tilde{G}) : f \text{ est } \tilde{K}-\text{finie} \}, \\
  \mathcal{H}(\tilde{G}^1) & := \{f \in C_c^\infty(\tilde{G}^1) : f \text{ est } \tilde{K}-\text{finie} \}, \\
  \mathcal{H}_N(\tilde{G}) & := \mathcal{H}(\tilde{G}) \cap C_N^\infty(\tilde{G}), \\
  \mathcal{H}_N(\tilde{G}^1) & := \mathcal{H}(\tilde{G}^1) \cap C_N^\infty(\tilde{G}^1).
\end{align*}
Plus précisément, on peut fixer $\Gamma$ un sous-ensemble fini de $\Pi_\mathrm{unit}(\tilde{K})$ et noter $\mathcal{H}(\tilde{G}^1)_\Gamma$ (resp. $\mathcal{H}(\tilde{G})_\Gamma$) l'espace des $f \in \mathcal{H}(\tilde{G}^1)$ (resp. $f \in \mathcal{H}_\Gamma(\tilde{G})$) dont les $\tilde{K}$-types sont contenus dans $\Gamma$. Alors $\mathcal{H}(\tilde{G}^1) = \bigcup_{\Gamma} \mathcal{H}(\tilde{G}^1)_\Gamma$ (resp. $\mathcal{H}(\tilde{G}) = \bigcup_{\Gamma} \mathcal{H}(\tilde{G})_\Gamma$). Idem pour les composantes locales du revêtement. Le résultat suivant garantit que toutes les sommes sur $\pi$ que nous considérons dans la suite sont finies.

\begin{proposition}\label{prop:finitude-pi}
  Soient $\Gamma$ un sous-ensemble fini de $\Pi_\mathrm{unit}(\tilde{K})$ et $f \in \mathcal{H}(\tilde{G}^1)_\Gamma$. Il n'existe qu'un nombre fini de $\pi \in \Pi_\mathrm{unit}(\tilde{M}^1)$ tels que
  \begin{itemize}
    \item $\pi$ contient des restriction à $\tilde{K} \cap \tilde{M}$ des éléments de $\Gamma$,
    \item $\mathcal{A}^2(\tilde{P})_{\chi,\pi} \neq \{0\}$.
  \end{itemize}
  En particulier, la somme dans la Proposition \ref{prop:majoration-2} est finie.
\end{proposition}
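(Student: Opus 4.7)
Le plan consiste à ramener l'énoncé à une propriété d'admissibilité du spectre discret restreint à une donnée cuspidale fixée. D'abord, la condition $\mathcal{A}^2(\tilde{P})_{\chi,\pi} \neq \{0\}$ oblige $\pi$ à intervenir dans $L^2_\text{disc}(M(F) \backslash \tilde{M}^1)_\chi = \bigoplus_{\chi^M \mapsto \chi} L^2_\text{disc}(M(F) \backslash \tilde{M}^1)_{\chi^M}$. Puisque la fibre de l'application naturelle $\mathfrak{X}^{\tilde{M}} \to \mathfrak{X}^{\tilde{G}}$ au-dessus de $\chi$ est finie (rappelé au \S\ref{sec:prod-Eis}), on se ramène à fixer $\chi^M = [M_1, \sigma]$ au-dessus de $\chi$ et à traiter uniquement le sous-espace $L^2_\text{disc}(M(F) \backslash \tilde{M}^1)_{\chi^M}$. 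Par ailleurs, l'ensemble $\Gamma'$ des classes de représentations irréductibles de $\tilde{K} \cap \tilde{M}$ qui figurent dans la restriction d'un $\gamma \in \Gamma$ est fini, étant formé des constituants d'un nombre fini de représentations de dimension finie.

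Le cœur du plan est d'invoquer l'admissibilité du $\tilde{M}^1$-module $L^2_\text{disc}(M(F) \backslash \tilde{M}^1)_{\chi^M}$: chaque composante $\tau$-isotypique, pour $\tau \in \Pi_\mathrm{unit}(\tilde{K} \cap \tilde{M})$, serait de dimension finie. Une fois cette admissibilité établie, le sous-espace $\Gamma'$-isotypique, somme finie de composantes de dimension finie, est lui-même de dimension finie et ne peut abriter qu'un nombre fini de composantes irréductibles $\pi$. La finitude de la somme dans la Proposition \ref{prop:majoration-2} en résulterait immédiatement, puisque la somme extérieure sur les paraboliques standard $P=MU \supset P_0$ est déjà finie.

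Le pas le plus délicat sera de justifier rigoureusement cette admissibilité. Elle provient de la construction du spectre discret par résidus \cite[V]{MW94}: toute représentation irréductible intervenant dans $L^2_\text{disc}(M(F) \backslash \tilde{M}^1)_{\chi^M}$ doit être un sous-quotient d'une induite parabolique dans $\tilde{M}^1$ déduite de $\sigma$ par une torsion en un point $\Lambda$ appartenant à un ensemble fini de points de résidu dépendant de $\sigma$; chacune de ces induites étant admissible, l'admissibilité globale s'ensuit. Cette étape s'appuie essentiellement sur la décomposition spectrale \cite{MW94}, dont la validité dans le cadre des revêtements est l'un des ingrédients essentiels rappelés dans l'introduction; il conviendra seulement de s'assurer que les $\tilde{K}$-types interviennent bien avec des multiplicités finies au niveau adélique, ce qui est clair en combinant l'admissibilité locale en chaque place avec la condition de quasi-invariance sous $\tilde{K}_v = G(\mathfrak{o}_v)$ aux places non ramifiées.
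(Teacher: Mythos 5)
Votre démonstration est correcte et suit essentiellement la même voie que celle du texte, qui se contente de renvoyer à la construction du spectre discret par résidus dans \cite[VI]{MW94}, à l'admissibilité des paramètres discrets et au Corollaire VI.1.8 : vous ne faites qu'expliciter ces références (finitude de la fibre de $\mathfrak{X}^{\tilde{M}} \to \mathfrak{X}^{\tilde{G}}$, finitude des points de résidus pour une donnée cuspidale fixée, admissibilité des induites paraboliques). Le seul point à verrouiller, que vous signalez d'ailleurs vous-même, est l'admissibilité adélique de chaque induite, qui résulte bien de l'admissibilité locale et de l'unicité de la droite sphérique aux places non ramifiées rappelée au \S\ref{sec:prem}.
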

\begin{proof}
  C'est une conséquence de la construction du spectre discret par résidus: voir \cite[VI]{MW94}, notamment l'assertion sur l'admissiblité des paramètres discrets et le Corollaire VI.1.8.
\end{proof}
Désormais, nous prenons toujours $f \in \mathcal{H}(\tilde{G}^1)$.

\paragraph{Application d'un théorème de multiplicateurs}
Notons $F_\infty = \prod_{v|\infty} F_v$ et
$$ \tilde{G}_\infty := \rev^{-1}(G(F_\infty)),$$
qui est un groupe de Lie dans la classe de Harish-Chandra. Les mêmes notations s'appliquent aux sous-groupes de Lévi de $G$. Alors $\tilde{K}_\infty := \rev^{-1}(\prod_{v|\infty} K_v)$ est un sous-groupe compact maximal de $\tilde{G}_\infty$ en bonne position relativement à $\widetilde{M_{0,\infty}}$. Notons $\mathfrak{g}_\infty$ l'algèbre de Lie de $\tilde{G}_\infty$ et $\mathfrak{g}_{\infty,\C} := \mathfrak{g}_\infty \otimes_\R \C$. On prend
\begin{itemize}
  \item $\mathfrak{h}_K$: une sous-algèbre de Cartan de l'algèbre de Lie de $\tilde{K}_\infty \cap \widetilde{M_{0,\infty}}$, et
  \item $\mathfrak{h}_0$: l'algèbre de Lie d'un tore réel déployé maximal dans $\widetilde{M_{0,\infty}}$
\end{itemize}
de sorte que
$$ \mathfrak{h} := i\mathfrak{h}_K \oplus \mathfrak{h}_0 \subset \mathfrak{g}_{\infty,\C} $$
est une sous-algèbre de Cartan de la forme déployée de $\mathfrak{g}_\infty$. Notons $\mathfrak{h}_\C := \mathfrak{h} \otimes_\R \C$ et
$$ W := W(\mathfrak{g}_{\infty,\C},\mathfrak{h}_\C) $$
le groupe de Weyl absolu. Alors $\mathfrak{h}$ est invariant par $W$. Pour tout $P \in \mathcal{F}(M_0)$, on définit
\begin{align*}
  h_P: \mathfrak{h} & \twoheadrightarrow \mathfrak{a}_P, \\
  X + Y & \mapsto H_P(\exp Y), \quad \text{ pour } X \in i\mathfrak{h}_K, Y \in \mathfrak{h}_0.
\end{align*}
Son dual fournit une inclusion canonique $\mathfrak{a}_P^* \hookrightarrow \mathfrak{h}^*$. Notons $\mathfrak{h}^1 := \Ker(h_G)$, c'est un sous-espace $W$-invariant. On fix un produit scalaire $W$-invariant $(,)$ sur $\mathfrak{h}$, d'où une norme $\|\cdot\|$, pour lequel $h_P : \mathfrak{h} \twoheadrightarrow$ est une projection orthogonale.

On définit ainsi la fonction $\|\cdot\|_\infty: \tilde{G}_\infty \to \R_{\geq 0}$ par $\|\tilde{x}\|_\infty = e^{\|X\|}$ si $\tilde{x} = \tilde{k}_1 \exp(X) \tilde{k}_2$ avec $\tilde{k}_1, \tilde{k}_2 \in \tilde{K}_\infty$ et $X \in \mathfrak{h}_0$. On suppose, comme c'est loisible, que $\|\cdot\|_\infty$ est la restriction à $\tilde{G}_\infty$ du hauteur adélique $\|\cdot\|$ pour $\tilde{G}$.

Soit $\pi_\infty$ une représentation admissible irréductible de $\tilde{G}_\infty$, on note $\nu_{\pi_\infty}$ son caractère infinitésimal. D'après l'isomorphisme de Harish-Chandra, cela s'identifie à un élément de $\mathfrak{h}_\C^*/W$.

Notons $\mathcal{E}(\mathfrak{h})$ l'espace des distributions (au sens de Schwartz) à support compact sur $\mathfrak{h}$, qui est une algèbre sous le produit convolution $*$. Notons $\mathcal{E}(\mathfrak{h})^W$ son sous-espace de $W$-invariants. Soit $\gamma \in \mathcal{E}(\mathfrak{h})$, on note $\hat{\gamma} \in C^\infty(i\mathfrak{h}^*)$ sa transformée de Fourier, qui se prolonge en une fonction entière sur $\mathfrak{h}_\C^*$. Idem pour $\mathfrak{h}^1$ au lieu de $\mathfrak{h}$. Notre étude de $J^T_\chi$ s'appuie sur le théorème de multiplicateurs suivant.

\begin{theorem}[{\cite[Theorem III.4.2, Corollary III.4.3]{Ar83}}]\label{prop:mult-reel}
  Soient $f_\infty \in \mathcal{H}(\tilde{G}_\infty)$, $\gamma \in \mathcal{E}(\mathfrak{h})^W$. Alors il existe une unique fonction $f_{\infty,\gamma} \in \mathcal{H}(\tilde{G}_\infty)$ telle que pour tout $\pi_\infty \in \Pi_\mathrm{unit}(\tilde{G}_\infty)$, on a
  $$ \pi_\infty(f_{\infty,\gamma}) = \hat{\gamma}(\nu_{\pi_\infty}) \pi_\infty(f_\infty). $$

  Si $f \in \mathcal{H}_N(\tilde{G}_\infty)$ et $\gamma$ est à support dans $\{H \in \mathfrak{h} : \|H\| \leq N_\gamma \}$ pour des $N,N_\gamma \geq 0$, alors $f_\gamma \in \mathcal{H}_{N+N_\gamma}(\tilde{G}_\infty)$.
\end{theorem}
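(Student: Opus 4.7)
Mon plan consiste à réduire cet énoncé au résultat original d'Arthur dans \cite{Ar83}, en exploitant le fait que $\tilde{G}_\infty$ appartient à la classe de Harish-Chandra, comme rappelé dans l'introduction. La démonstration d'Arthur ne s'appuie que sur les propriétés structurelles de cette classe --- décomposition de Cartan, algèbre de Lie réductive, théorie des caractères infinitésimaux, classification des représentations admissibles à la Harish-Chandra --- et toutes ces propriétés sont préservées par l'extension centrale finie $1 \to \bmu_m \to \tilde{G}_\infty \to G(F_\infty) \to 1$, puisqu'elle ne modifie ni l'algèbre de Lie ni la composante neutre.

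Pour l'existence, je passerais par le théorème de Paley-Wiener réel d'Arthur, qui caractérise l'image de $\mathcal{H}_N(\tilde{G}_\infty)$ par transformée spectrale en termes de familles de fonctions entières sur $\mathfrak{h}_\C^*$, invariantes par $W$, à $\tilde{K}$-types finis, et de type exponentiel contrôlé par $N$. La fonction $\hat{\gamma}$ est elle-même $W$-invariante, entière, et de type exponentiel $N_\gamma$ puisque $\gamma$ est à support dans $\{H \in \mathfrak{h} : \|H\| \leq N_\gamma\}$. Multiplier les données spectrales par $\hat{\gamma}(\nu)$ préserve donc l'espace de Paley-Wiener et augmente le type exponentiel de $N_\gamma$ au plus. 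L'isomorphisme de Harish-Chandra identifiant $\nu_{\pi_\infty}$ à un élément de $\mathfrak{h}_\C^*/W$ assure que la fonction $f_{\infty,\gamma}$ obtenue par inversion vérifie bien $\pi_\infty(f_{\infty,\gamma}) = \hat{\gamma}(\nu_{\pi_\infty}) \pi_\infty(f_\infty)$, et le contrôle de support $f_{\infty,\gamma} \in \mathcal{H}_{N+N_\gamma}(\tilde{G}_\infty)$ en découle.

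L'unicité résulte de l'injectivité de l'application $f \mapsto (\pi_\infty(f))_{\pi_\infty \in \Pi_\mathrm{unit}(\tilde{G}_\infty)}$ sur $\mathcal{H}(\tilde{G}_\infty)$, conséquence immédiate du théorème de Paley-Wiener déjà invoqué (deux éléments ayant même image spectrale ont même transformée, donc coïncident). L'obstacle principal me paraît être de vérifier soigneusement que tous les ingrédients d'Arthur --- notamment la construction explicite de l'espace de Paley-Wiener et la compatibilité avec l'action des multiplicateurs --- subsistent pour les revêtements. Ce point est toutefois de nature purement formelle : il suffit d'observer que les représentations admissibles irréductibles de $\tilde{G}_\infty$ se décrivent via la même machinerie de $(\mathfrak{g}_\infty, \tilde{K}_\infty)$-modules que dans le cas réductif connexe, puisque l'extension centrale agit trivialement sur l'algèbre enveloppante. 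Aucun argument nouveau n'est donc requis, seule une traduction des notations.
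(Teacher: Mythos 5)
Your proposal takes essentially the same route as the paper, which gives no proof at all beyond the remark that Arthur's multiplier theorem is valid for any Lie group in the Harish-Chandra class, the covering $\tilde{G}_\infty$ included; your reduction to \cite{Ar83} via the stability of that class under finite central extensions is exactly this observation. Your additional sketch of Arthur's own argument (Paley--Wiener space, multiplication of the spectral data by the entire $W$-invariant function $\hat{\gamma}$ of exponential type $N_\gamma$, injectivity of $f \mapsto (\pi_\infty(f))_{\pi_\infty}$ for uniqueness) is accurate and merely spells out what the paper leaves implicit.
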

Ce théorème d'Arthur est valable pour tout groupe de Lie dans la classe de Harish-Chandra, y compris les revêtements. Remarquons que, lorsque $\gamma$ est la mesure de Dirac concentrée en $0$, on a $f_\gamma=f$.

Revenons au cas adélique. Pour $\pi = \bigotimes_v \pi_v$ une représentation admissible irréductible de $\tilde{G}$, on pose $\nu_\pi = \nu_{\pi_\infty}$. Soit $f \in \mathcal{H}(\tilde{G}^1)$, qui est la restriction d'une fonction $\sum_{i=1}^s f_{i,\infty} f_i^\infty \in \mathcal{H}(\tilde{G})$, où les $f_{i,\infty}$ et $f_i^\infty$ sont comme d'habitude des fonctions en les composantes archimédiennes et non archimédiennes, respectivement. Pour $\gamma \in \mathcal{E}(\mathfrak{h}^1)^W$, on pose
$$ f_\gamma := \left. \sum_{i=1}^s f_{i,\infty,\gamma} f_i^\infty \right|_{\tilde{G}^1}. $$

\begin{lemma}\label{prop:mult-adelique}
  La fonction $f_\gamma \in \mathcal{H}(\tilde{G}^1)$ est bien définie. Si $f \in \mathcal{H}_N(\tilde{G}^1)$ et $\gamma$ est à support dans $\{H \in \mathfrak{h}^1 : \|H\| \leq N_\gamma \}$ pour des $N,N_\gamma \geq 0$, alors $f_\gamma \in \mathcal{H}_{N+N_\gamma}(\tilde{G}^1)$.
\end{lemma}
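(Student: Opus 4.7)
The plan is to realize $\gamma$ as a multiplier on all of $\tilde G_\infty$ by trivially extending it along the $W$-fixed direction $\mathfrak{a}_G$, apply the archimedean Theorem \ref{prop:mult-reel} factor-by-factor, and then exploit the product structure $\tilde G_\infty \cong \tilde G_\infty^1 \times A_{G,\infty}$ to show that the multiplier action commutes with restriction to $\tilde G^1$.

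First, the orthogonal projection $h_G : \mathfrak{h} \twoheadrightarrow \mathfrak{a}_G$ gives a $W$-stable decomposition $\mathfrak{h} = \mathfrak{h}^1 \oplus \mathfrak{a}_G$, where $W$ acts trivially on $\mathfrak{a}_G$ since $A_{G,\infty}$ sits in the center of $\tilde G_\infty$. We may therefore extend $\gamma$ canonically to $\bar\gamma := \gamma \boxtimes \delta_0 \in \mathcal{E}(\mathfrak{h})^W$, where $\delta_0$ is the Dirac mass at $0 \in \mathfrak{a}_G$. Its Fourier transform satisfies $\hat{\bar\gamma}(\nu) = \hat\gamma(\nu|_{\mathfrak{h}^1})$, and $\mathrm{supp}(\bar\gamma) \subset \{H \in \mathfrak{h} : \|H\| \leq N_\gamma\}$ whenever $\mathrm{supp}(\gamma)$ lies in the $N_\gamma$-ball of $\mathfrak{h}^1$.

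Next, apply Theorem \ref{prop:mult-reel} with $\bar\gamma$ to each archimedean factor $f_{i,\infty}$, writing $f_{i,\infty,\gamma}$ for $f_{i,\infty,\bar\gamma}$ by abuse of notation, and set $f_\gamma := \sum_i (f_{i,\infty,\gamma} f_i^\infty)|_{\tilde G^1}$. Using that $\tilde G^1 = \tilde G_\infty^1 \times \tilde G_\text{fin}$ (where $\tilde G_\text{fin} := \rev^{-1}(\prod_{v\nmid\infty} G(F_v))$) and grouping terms so that the $f_i^\infty$'s are linearly independent, the independence of $f_\gamma$ on the chosen decomposition of $f$ reduces to the following local claim: if $f_\infty \in \mathcal{H}(\tilde G_\infty)$ satisfies $f_\infty|_{\tilde G_\infty^1} = 0$, then $f_{\infty,\bar\gamma}|_{\tilde G_\infty^1} = 0$.

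The main obstacle is the proof of this claim; it amounts to saying that the multiplier action of $\bar\gamma$ is compatible with the direct product decomposition $\tilde G_\infty = \tilde G_\infty^1 \times A_{G,\infty}$. Since $A_{G,\infty}$ is central, every $\pi \in \Pi_\mathrm{unit}(\tilde G_\infty)$ factors as $\pi = \pi^1 \otimes \chi$ with $\pi^1 \in \Pi_\mathrm{unit}(\tilde G_\infty^1)$ and $\chi$ a unitary character of $A_{G,\infty}$, and correspondingly $\nu_\pi = \nu_{\pi^1} \oplus d\chi \in (\mathfrak{h}^1)^*_\C \oplus \mathfrak{a}_{G,\C}^*$, so that
$$ \hat{\bar\gamma}(\nu_\pi) = \hat\gamma(\nu_{\pi^1}) \cdot \hat{\delta_0}(d\chi) = \hat\gamma(\nu_{\pi^1}) $$
depends only on $\pi^1$. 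Combined with the defining identity $\pi(f_{\infty,\bar\gamma}) = \hat{\bar\gamma}(\nu_\pi)\pi(f_\infty)$, this shows (on pure tensors in the dense subspace $\mathcal{H}(\tilde G_\infty^1) \otimes C_c^\infty(A_{G,\infty}) \subset \mathcal{H}(\tilde G_\infty)$, and hence everywhere by linearity and continuity) that
$$ (f_\infty^1 \otimes \varphi)_{\bar\gamma} = (f_\infty^1)_\gamma \otimes \varphi, $$
where $(\cdot)_\gamma$ denotes the multiplier action of Theorem \ref{prop:mult-reel} applied to the Harish-Chandra group $\tilde G_\infty^1$ (which has the same Cartan $\mathfrak{h}^1$ and Weyl group $W$). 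Since restriction $f_\infty \mapsto f_\infty|_{\tilde G_\infty^1}$ corresponds to $\varphi \mapsto \varphi(1)$, vanishing of $f_\infty$ on $\tilde G_\infty^1$ implies vanishing of $f_{\infty,\bar\gamma}$ on $\tilde G_\infty^1$, settling well-definedness.

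Finally, the support assertion is immediate: if $f \in \mathcal{H}_N(\tilde G^1)$, choose an extension $F = \sum_i f_{i,\infty} f_i^\infty \in \mathcal{H}_N(\tilde G)$ (using the product structure and a bump function on $A_{G,\infty}$), whence the last clause of Theorem \ref{prop:mult-reel} applied with $\bar\gamma$ (supported in the $N_\gamma$-ball by paragraph one) yields $F_{\bar\gamma} \in \mathcal{H}_{N+N_\gamma}(\tilde G)$; restricting to $\tilde G^1$ preserves the bound on the height, so $f_\gamma \in \mathcal{H}_{N+N_\gamma}(\tilde G^1)$.
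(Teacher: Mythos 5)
Your strategy---extending $\gamma$ to $\bar\gamma=\gamma\boxtimes\delta_0$ on $\mathfrak{h}=\mathfrak{h}^1\oplus\mathfrak{a}_G$ and checking that the multiplier respects the splitting $\tilde{G}_\infty=\tilde{G}^1_\infty\times A_{G,\infty}$---can be made to work, but as written the reduction to your local claim rests on the identity $\tilde{G}^1=\tilde{G}^1_\infty\times\tilde{G}_{\text{fin}}$, and this is false whenever $\mathfrak{a}_G\neq 0$, i.e.\ precisely when the lemma has content. The group $\tilde{G}^1$ is the kernel of the \emph{adelic} map $H_{\tilde{G}}$, so it contains every pair $(x_\infty,x_f)$ with $H_G(x_\infty)=-H_G(x_f)$; already for $G=\GL_1$ over $\Q$ one has $\A^1\neq(\R^\times)^1\times\A_f^\times$, since $H_G$ does not vanish on the finite idèles. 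Consequently, the hypothesis that $F=\sum_i f_{i,\infty}f_i^\infty$ vanishes on $\tilde{G}^1$ only says that each archimedean combination $g_{x_f}:=\sum_i f_i^\infty(x_f)\,f_{i,\infty}$ vanishes on the slice $\{x_\infty: H_G(x_\infty)=-H_G(x_f)\}$, where $-H_G(x_f)$ runs over a nontrivial subgroup of $\mathfrak{a}_G$. Your claim for the single slice $X=0$ therefore does not suffice. The repair is fortunately already contained in your tensor computation: the identity $(f^1_\infty\otimes\varphi)_{\bar\gamma}=(f^1_\infty)_\gamma\otimes\varphi$ gives $f_{\infty,\bar\gamma}|_{\{H_G=X\}}=\bigl(f_\infty|_{\{H_G=X\}}\bigr)_\gamma$ for \emph{every} $X\in\mathfrak{a}_G$ (replace the evaluation $\varphi\mapsto\varphi(1)$ by $\varphi\mapsto\varphi(\exp X)$), so vanishing on each relevant slice is preserved and well-definedness follows. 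You should then also say explicitly that you are invoking Arthur's theorem a second time for the Harish-Chandra group $\tilde{G}^1_\infty$, and that two functions in $\mathcal{H}(\tilde{G}_\infty)$ with the same operators $\pi(\cdot)$ for all $\pi\in\Pi_\mathrm{unit}(\tilde{G}_\infty)$ coincide.

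For comparison, the paper's proof stays entirely on the dual side and avoids the slice issue: since $\gamma$ is supported on $\mathfrak{h}^1=\Ker(h_G)$, its Fourier transform $\hat\gamma$ is invariant under translation by $i\mathfrak{a}_G^*$, and Fourier inversion along $A_{G,\infty}$ gives $(\pi|_{\tilde{G}^1})(f_\gamma)=\int_{i\mathfrak{a}_G^*}\hat\gamma(\nu_{\pi_\lambda})\,\pi_\lambda(\sum_i f_{i,\infty}f_i^\infty)\dd\lambda=\hat\gamma(\nu_\pi)\,(\pi|_{\tilde{G}^1})(f)$ for every $\pi\in\Pi_\mathrm{unit}(\tilde{G})$; the right-hand side depends only on $f$ and not on the chosen decomposition, whence well-definedness in two lines. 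Your group-side argument, once the slice correction above is made, proves the same thing but at the cost of handling the non-product structure of $\tilde{G}^1$ by hand; the paper's computation absorbs exactly this point into the $i\mathfrak{a}_G^*$-invariance of $\hat\gamma$.
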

\begin{proof}
  Il suffit de prouver la première assertion. Observons d'abord que $\hat{\gamma}$ est invariant par $i\mathfrak{a}_G^*$. Soit $\pi \in \Pi_\mathrm{unit}(\tilde{G})$, l'inversion de Fourier entraîne que
  \begin{align*}
    (\pi|_{\tilde{G}^1})(f_\gamma) & = \int_{i\mathfrak{a}_G^*} \hat{\gamma}(\nu_{\pi_\lambda}) \pi_\lambda\left(\sum_{i=1}^s f_{i,\infty} f_i^\infty\right) \dd\lambda \\
    & = \hat{\gamma}(\nu_\pi) \int_{i\mathfrak{a}_G^*} \pi_\lambda\left(\sum_{i=1}^s f_{i,\infty} f_i^\infty\right) \dd\lambda \\
    & = \hat{\gamma}(\nu_\pi) (\pi|_{\tilde{G}^1})(f|_{\tilde{G}^1}).
  \end{align*}
  Puisque $\pi$ est arbitraire, on voit que $f_\gamma$ est bien défini.
\end{proof}

Fixons $P=MU$, $\chi \in \mathfrak{X}^{\tilde{G}}$, $\pi \in \Pi_\mathrm{unit}(\tilde{M})$ et $\lambda \in i\mathfrak{a}_M^*$ comme précédemment. C'est bien connu que le caractère infinitésimal de la restriction sur $\tilde{G}_\infty$ de $\mathcal{I}_{\tilde{P}}(\lambda)_{\chi,\pi}$ est égale à la $W$-orbite contenant $\nu_\pi + \lambda$. On déduit du Lemme \ref{prop:mult-adelique} et de \eqref{eqn:Psi} le résultat suivant.

\begin{corollary}
  Soient $f \in \mathcal{H}(\tilde{G}^1)$, $\gamma \in \mathcal{E}(\mathfrak{h}^1)^W$ et $T \in \mathfrak{a}_0$ tels que $d_0(T) > C_0$. Alors
  $$ \Psi^T_\pi(\lambda, f_\gamma) = \hat{\gamma}(\nu_\pi + \lambda) \Psi^T_\pi(\lambda, f). $$
\end{corollary}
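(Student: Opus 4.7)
Le plan est d'abord d'établir l'identité d'opérateurs
\begin{equation*}
  \mathcal{I}_{\tilde{P}}(\lambda, f_\gamma)_{\chi,\pi} = \hat{\gamma}(\nu_\pi + \lambda) \cdot \mathcal{I}_{\tilde{P}}(\lambda, f)_{\chi,\pi},
\end{equation*}
puis de conclure en insérant cette égalité dans la définition \eqref{eqn:Psi} de $\Psi^T_\pi$, ce qui permet de factoriser le scalaire $\hat{\gamma}(\nu_\pi + \lambda)$ hors de la trace. Aucun nouveau calcul sur l'opérateur de troncature $\Omega^T_{\chi,\pi}(\tilde{P},\lambda)$ n'est nécessaire, puisque seule l'action de $f$ est affectée par le multiplicateur.

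Pour obtenir cette identité d'opérateurs, je rappellerais d'abord le calcul standard du caractère infinitésimal archimédien d'une représentation induite. Le twist par le caractère $e^{\angles{\lambda, H_M(\cdot)}}$ (pour $\lambda \in i\mathfrak{a}_M^* \hookrightarrow \mathfrak{h}^*$) translate le caractère infinitésimal de $\pi_\infty$ par $\lambda$, et l'induction parabolique le préserve via l'isomorphisme de Harish-Chandra; il en résulte que tout sous-quotient irréductible de la restriction de $\mathcal{I}_{\tilde{P}}(\lambda)_{\chi,\pi}$ à $\tilde{G}_\infty$ admet $\nu_\pi + \lambda$ comme caractère infinitésimal modulo $W$. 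D'après la Proposition \ref{prop:finitude-pi}, puisque $f \in \mathcal{H}(\tilde{G}^1)_\Gamma$ pour un certain $\Gamma$ fini, l'opérateur $\mathcal{I}_{\tilde{P}}(\lambda, f)_{\chi,\pi}$ se factorise à travers la somme directe finie des composantes $\tilde{K}$-isotypiques attachées à $\Gamma$. Sur chaque composante irréductible unitaire $\sigma$ intervenant dans cette restriction, le Théorème \ref{prop:mult-reel} appliqué à $\sigma_\infty$ fournit $\sigma(f_{\infty,\gamma}) = \hat{\gamma}(\nu_\pi + \lambda)\sigma(f_\infty)$, et le Lemme \ref{prop:mult-adelique} étend l'identité au niveau adélique. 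Par additivité (linéarité en $f$ et décomposition en irréductibles), on en déduit l'identité d'opérateurs annoncée.

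Il reste à substituer cette identité dans \eqref{eqn:Psi}, ce qui donne
\begin{equation*}
  \Psi^T_\pi(\lambda, f_\gamma) = |\mathcal{P}(M)|^{-1} \Tr\bigl(\Omega^T_{\chi,\pi}(\tilde{P},\lambda) \cdot \hat{\gamma}(\nu_\pi + \lambda)\, \mathcal{I}_{\tilde{P}}(\lambda, f)_{\chi,\pi}\bigr) = \hat{\gamma}(\nu_\pi + \lambda)\, \Psi^T_\pi(\lambda, f),
\end{equation*}
ce qui conclut. Le seul point potentiellement délicat est la justification du calcul du caractère infinitésimal dans le cadre des revêtements; mais puisque $\bmu_m$ est central et fini, il agit trivialement sur le centre de l'algèbre enveloppante de $\mathfrak{g}_\infty$, et l'argument du cas réductif classique s'applique sans modification. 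L'hypothèse $d_0(T) > C_0$ n'est utilisée que pour assurer que les deux côtés sont bien définis via la Proposition \ref{prop:majoration-1}.
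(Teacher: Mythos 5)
Votre démonstration est correcte et suit essentiellement la même voie que le texte: le papier déduit le corollaire en une ligne du fait que le caractère infinitésimal de la restriction de $\mathcal{I}_{\tilde{P}}(\lambda)_{\chi,\pi}$ à $\tilde{G}_\infty$ est la $W$-orbite de $\nu_\pi+\lambda$, combiné au Lemme \ref{prop:mult-adelique} et à la définition \eqref{eqn:Psi}. Vous ne faites qu'expliciter ces mêmes étapes (identité d'opérateurs, factorisation du scalaire hors de la trace), sans écart de méthode.
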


\paragraph{Des polynômes}
Pour tout $Y \in \mathfrak{h}^1$, on note par $\delta_Y$ la mesure de Dirac concentrée en $Y$. Soit $H \in \mathfrak{h}^1$. Prenons
$$ \gamma := |W|^{-1} \sum_{w \in W} \delta_{w^{-1}H}  $$
où 

\begin{lemma}
  Soient $N \geq 0$, $f \in \mathcal{H}_N(\tilde{G}^1)$, $T \in \mathfrak{a}_0$ tels que la majoration dans la Proposition \ref{prop:majoration-2} est satisfaite pour la fonction $f_\gamma \in \mathcal{H}_{N+\|H\|}(\tilde{G}^1)$. Alors
  $$
    J^T_\chi(f_\gamma) = |W|^{-1} \sum_{w \in W} \sum_{\substack{P \supset P_0 \\ P=MU}} \sum_{\pi \in \Pi_\mathrm{unit}(\tilde{M}^1)} \psi^T_\pi(w^{-1}H) e^{\angles{\nu_\pi, w^{-1}H}},
  $$
  où, pour $\pi \in \Pi_\mathrm{unit}(\tilde{M})$, on définit
  $$ \psi^T_\pi(H) := \int_{i(\mathfrak{a}^G_M)^*} \Psi^T_\pi(\lambda) e^{\angles{\lambda, H}} \dd\lambda. $$
\end{lemma}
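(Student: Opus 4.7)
Le plan consiste à enchaîner les trois résultats qui précèdent l'énoncé, essentiellement dans l'ordre où ils apparaissent.

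Première étape: appliquer la Proposition \ref{prop:majoration-2} à la fonction $f_\gamma$. D'après le Lemme \ref{prop:mult-adelique} on a $f_\gamma \in \mathcal{H}_{N+\|H\|}(\tilde{G}^1)$, et par hypothèse $T$ est tel que la majoration de \ref{prop:majoration-2} est vérifiée pour $f_\gamma$. On obtient ainsi
$$ J^T_\chi(f_\gamma) = \sum_{P=MU \supset P_0} \sum_{\pi \in \Pi_\mathrm{unit}(\tilde{M}^1)} \int_{i(\mathfrak{a}^G_M)^*} \Psi^T_{\chi,\pi}(\lambda, f_\gamma) \dd\lambda, $$
où la somme sur $\pi$ est finie grâce à la Proposition \ref{prop:finitude-pi}.

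Deuxième étape: remplacer $\Psi^T_\pi(\lambda, f_\gamma)$ par $\hat\gamma(\nu_\pi + \lambda) \Psi^T_\pi(\lambda, f)$ via le corollaire qui précède immédiatement l'énoncé. Il s'agit ensuite de calculer $\hat\gamma$. Puisque $\gamma = |W|^{-1} \sum_{w \in W} \delta_{w^{-1}H}$, la convention de transformée de Fourier fixée par le Théorème \ref{prop:mult-reel} donne
$$ \hat\gamma(\xi) = |W|^{-1} \sum_{w \in W} e^{\angles{\xi, w^{-1}H}}, \quad \xi \in \mathfrak{h}_\C^*. $$
On l'évalue en $\xi = \nu_\pi + \lambda$ et on décompose le facteur exponentiel en $e^{\angles{\nu_\pi, w^{-1}H}} \cdot e^{\angles{\lambda, w^{-1}H}}$, où le premier facteur est indépendant de $\lambda$.

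Troisième étape: intervertir la somme finie sur $w$ avec $\sum_P \sum_\pi \int_{i(\mathfrak{a}^G_M)^*}$. L'interversion est licite car $W$ est fini et parce que la Proposition \ref{prop:majoration-1} appliquée à $f_\gamma$ garantit la convergence absolue de l'expression globale; on peut alors sortir $e^{\angles{\nu_\pi, w^{-1}H}}$ de l'intégrale et reconnaître l'intégrale restante comme $\psi^T_\pi(w^{-1}H)$ par la définition même de $\psi^T_\pi$. Cela donne exactement la formule annoncée.

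L'argument est donc essentiellement formel une fois que l'on dispose de la Proposition \ref{prop:majoration-2} (qui sert à transformer $J^T_\chi$ en sa décomposition spectrale explicite) et du corollaire exprimant l'effet du multiplicateur $\gamma$ sur $\Psi^T_\pi$. L'unique point qui demande de la vigilance est la cohérence des conventions de Fourier entre le Théorème \ref{prop:mult-reel} et la définition de $\psi^T_\pi$, de sorte que la formule ci-dessus pour $\hat\gamma(\delta_Y)$ soit bien $\xi \mapsto e^{\angles{\xi,Y}}$ et non sa conjuguée; moyennant quoi les signes et les exponentielles se recollent sans effort.
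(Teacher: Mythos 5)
Votre démonstration est correcte et suit exactement la même voie que celle du texte, qui se réduit à invoquer la Proposition \ref{prop:majoration-2}, le Lemme \ref{prop:mult-adelique} (via le corollaire sur les multiplicateurs) et le fait que $\widehat{\delta_Y} = e^{\angles{\cdot,Y}}$. Vous ne faites qu'expliciter ces trois étapes; rien à redire.
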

Remarquons que $\psi^T_\pi(H)=0$ (où $T,H$ étant variables) sauf pour un nombre fini de $i\mathfrak{a}_M^*$-orbites de $\pi \in \Pi_\mathrm{unit}(\tilde{M})$, car $f$ est supposée $\tilde{K}$-finie.

\begin{proof}
  Il suffit d'appliquer la Proposition \ref{prop:majoration-2}, le Lemme \ref{prop:mult-adelique} et le fait que $\widehat{\delta_Y} = e^{\angles{\cdot,Y}}$ pour tout $Y$.
\end{proof}

En particulier,
\begin{equation}\label{eqn:p}
  p^T(H) := |W|^{-1} \sum_{w \in W} \sum_{P=MU \supset P_0} \sum_{\pi \in \Pi_\mathrm{unit}(\tilde{M}^1)} \psi^T_\pi(w^{-1}H) e^{\angles{\nu_\pi, w^{-1}H}}
\end{equation}
est un polynôme en $T$ pourvu que $d_0(T) > C(1+\|H\|)$, où $C$ est une constante dépendant de $N$. De plus, on a $J^T_\chi(f) = p^T(0)$ pourvu que $d_0(T) > C$.

D'autre part, d'après la Proposition \ref{prop:majoration-1}, $\psi^T_\pi(H)$ et $p^T(H)$ sont lisses en $H$. La difficulté principale est ce que $\psi^T_\pi(H)$ n'est pas une distribution tempérée en $H$. Cela nécessite les constructions suivantes pour isoler les exposants réels.

Soient $P=MU$ comme ci-dessus. Pour $\pi \in \Pi_\mathrm{unit}(\tilde{M})$, son caractère infinitésimal $\nu_\pi$ s'identifie à un élément $\nu_\pi \in \mathfrak{h}_\C^*/W^M$, où $W^M$ est le groupe de Weyl absolu associé à $M_\infty$. Si l'on ne regarde que $\pi|_{\tilde{M}^1}$, alors $\nu_\pi$ est déterminé à $i\mathfrak{a}_M^*$ près (rappelons que $\mathfrak{a}_M^* \hookrightarrow \mathfrak{h}^*$ via le dual de $h_P$). Dorénavant, nous en fixons un représentant, noté abusivement $\nu_\pi$, tel que
$$ \nu_\pi = X_\pi + i Y_\pi, \quad X_\pi, Y_\pi \in \mathfrak{h}^*$$
avec $\|Y_\pi\|$ minimal.

On dit que deux paires $(w_1, \pi_1)$, $(w_2, \pi_2)$ avec $w_i \in W$, $\pi_i \in \Pi_\mathrm{unit}(\tilde{M}^1)$ ($i=1,2$) sont équivalentes si $w_1 X_{\pi_1} = w_2 X_{\pi_2}$. L'ensemble de telles classes d'équivalence est noté $\mathcal{E}$. Pour tout $\Gamma = [w,\pi] \in \mathcal{E}$ (ne pas le confondre avec le même symbole désignant les $\tilde{K}$-types dans \S\ref{sec:prod-Eis}), l'élément $X_\Gamma := wX_\pi$ est bien défini. On peut regrouper les termes dans \eqref{eqn:p} et écrire
$$ p^T(H) = \sum_{\Gamma \in \mathcal{E}} \psi^T_\Gamma(H) e^{\angles{X_\Gamma, H}}, \quad d_0(T) > C(1+\|H\|) $$
où
\begin{equation}\label{eqn:psi-Gamma}
  \psi^T_\Gamma(H) := |W|^{-1} \sum_{(w, \pi) \in \Gamma} \psi^T_\pi(w^{-1}H) e^{\angles{iY_\pi, w^{-1}H}}.
\end{equation}

\begin{lemma}\label{prop:psi-Gamma-majoration}
  Soit $D$ un opérateur différentiel à coefficients constants sur $\mathfrak{h}^1$. Il existe une constante $c_D > 0$ telle que pour tout $\Gamma \in \mathcal{E}$, $H \in \mathfrak{h}^1$ et $T \in \mathfrak{a}_0$ tel que $d_0(T) > C_0$, on a
  $$ |D\psi_\Gamma^T(H)| \leq c_D (1+\|T\|)^{d_0}. $$

  Il en résulte que $p^T(H)$ est de degré $\leq d_0$ en $T$.
\end{lemma}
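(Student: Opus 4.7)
Le plan est d'utiliser la différentiation sous le signe intégral dans la définition \eqref{eqn:psi-Gamma} pour se ramener à la Proposition \ref{prop:majoration-1}. Pour chaque couple $(w,\pi) \in \Gamma$, on réécrit d'abord
\begin{equation*}
  \psi^T_\pi(w^{-1}H) e^{\angles{iY_\pi, w^{-1}H}} = \int_{i(\mathfrak{a}^G_M)^*} \Psi^T_{\chi,\pi}(\lambda, f) \, e^{\angles{w(\lambda + iY_\pi), H}} \dd\lambda.
\end{equation*}
Puisque $\lambda \in i(\mathfrak{a}^G_M)^* \subset i\mathfrak{h}^*$ et $iY_\pi \in i\mathfrak{h}^*$, l'élément $w(\lambda+iY_\pi)$ est imaginaire pur; l'exponentielle est donc de module $1$ accouplée avec $H \in \mathfrak{h}^1$ réel.

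En appliquant $D$ et en dérivant sous le signe intégral (ce qui est justifié par la Proposition \ref{prop:majoration-1} avec $n$ assez grand), on sort un facteur polynomial $P_{w,D}(w(\lambda+iY_\pi))$ de degré $n := \deg D$, majoré par $c_D(1+\|\lambda\|+\|Y_\pi\|)^n$. L'étape cruciale est alors d'invoquer la Proposition \ref{prop:finitude-pi}: la $\tilde{K}$-finitude de $f$ entraîne que seul un nombre fini de $\pi \in \Pi_\mathrm{unit}(\tilde{M}^1)$, et donc de classes $\Gamma \in \mathcal{E}$, interviennent; d'où une borne uniforme $\|Y_\pi\| \leq Y_{\max}$. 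Il en résulte
\begin{equation*}
  |D\psi^T_\Gamma(H)| \leq c'_D \sum_{(w,\pi) \in \Gamma} \int_{i(\mathfrak{a}^G_M)^*} |\Psi^T_{\chi,\pi}(\lambda,f)| (1+\|\lambda\|)^n \dd\lambda,
\end{equation*}
et la Proposition \ref{prop:majoration-1} fournit la majoration cherchée $c_D(1+\|T\|)^{d_0}$.

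Pour en déduire le degré de $p^T(H)$ en $T$, on remarque que $p^T(H) = \sum_{\Gamma \in \mathcal{E}} \psi^T_\Gamma(H) e^{\angles{X_\Gamma, H}}$ est encore une somme finie par le même argument de finitude, et que pour $H$ fixé chaque facteur $e^{\angles{X_\Gamma, H}}$ est borné. Le cas $D = \identity$ ci-dessus donne $|p^T(H)| \leq C_H (1+\|T\|)^{d_0}$ sur le domaine $d_0(T) > C(1+\|H\|)$, où l'on sait déjà par \eqref{eqn:p} que $p^T(H)$ est polynomial en $T$; un polynôme borné par $(1+\|T\|)^{d_0}$ sur un ouvert conique est nécessairement de degré $\leq d_0$.

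L'obstacle principal me paraît être l'uniformité de $\|Y_\pi\|$ dans la somme définissant $\psi^T_\Gamma$: elle repose entièrement sur la Proposition \ref{prop:finitude-pi}, laquelle découle elle-même de l'admissibilité des paramètres discrets dans la construction résiduelle du spectre discret de \cite{MW94}. Tout le reste n'est qu'un exercice de différentiation sous une intégrale absolument convergente.
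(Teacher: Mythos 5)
Votre démonstration est correcte et suit exactement la voie du texte, dont la preuve se réduit à la phrase «~c'est une conséquence immédiate de la Proposition \ref{prop:majoration-1}~» : vous explicitez précisément ce qu'il faut, à savoir la dérivation sous le signe intégral (les exposants $w(\lambda+iY_\pi)$ étant imaginaires purs), la borne uniforme sur $\|Y_\pi\|$ fournie par la finitude des $\pi$ contribuants (Proposition \ref{prop:finitude-pi} et $\tilde{K}$-finitude de $f$), puis l'application de la Proposition \ref{prop:majoration-1} avec $n=\deg D$. L'argument final sur le degré de $p^T(H)$ — un polynôme majoré par $(1+\|T\|)^{d_0}$ sur un domaine contenant des cônes ouverts est de degré $\leq d_0$ — est également celui qu'il faut.
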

\begin{proof}
  C'est une conséquence immédiate de la Proposition \ref{prop:majoration-1}.
\end{proof}

\begin{proposition}\label{prop:p-Gamma}
  Il existe une unique famille de fonctions $(p^T_\Gamma(H))_{\Gamma \in \mathcal{E}}$ qui sont polynomiales de degrés $\leq d_0$ en $T$ et lisses en $H$ telle que
  \begin{enumerate}
    \item pour tout $T,H$, on a
    $$ p^T(H) = \sum_{\Gamma \in \mathcal{E}} p^T_\Gamma(H) e^{\angles{X_\Gamma, H}}; $$
    \item il existe des constantes $C, \epsilon >0$ telles que pour tout opérateur différentiel $D$ à coefficients constants sur $\mathfrak{h}^1$, il existe une constante $c_D > 0$ vérifiant
    \begin{equation}\label{eqn:p-Gamma-1}
      |D(\psi_\Gamma^T(H) - p^T_\Gamma(H))| \leq c_D e^{-\epsilon d_0(T)} (1+\|T\|)^{d_0}
    \end{equation}
    pour tous $\Gamma, H, T$ avec $d_0(T) > C(1+\|H\|)$, et
    \begin{equation}\label{eqn:p-Gamma-2}
      |D p^T_\Gamma(H)| \leq c_D (1+\|H\|)^{d_0} (1+\|T\|)^{d_0}
    \end{equation}
    pour tous $H, T$.
  \end{enumerate}
  En particulier, pour tout $T \in \mathfrak{a}_0$, on a
  $$ J^T_\chi(f) = \sum_{\Gamma \in \mathcal{E}} p^T_\Gamma(0), $$
  et $p^T_\Gamma(H)$ est une distribution tempérée sur $\mathfrak{h}^1$ pour tout $T$.
\end{proposition}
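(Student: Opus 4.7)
I split the proof into (a) uniqueness, (b) existence, and (c) verification of the bounds and remaining assertions.

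\emph{Uniqueness.} This follows from condition~(ii) alone. If $(p^T_\Gamma)$ and $(\tilde p^T_\Gamma)$ are two families satisfying the conclusions, then $r^T_\Gamma := p^T_\Gamma - \tilde p^T_\Gamma$ is polynomial in $T$ of degree $\leq d_0$, and by the triangle inequality applied to \eqref{eqn:p-Gamma-1}, $|r^T_\Gamma(H)| \leq 2 c \, e^{-\epsilon d_0(T)} (1+\|T\|)^{d_0}$ whenever $d_0(T) > C(1+\|H\|)$. Fix $H$; along any ray $\mathbb{R}_{>0}\xi \subset \mathcal{T}$ the bound forces the polynomial $T \mapsto r^T_\Gamma(H)$ to tend to zero, hence to vanish on the whole ray. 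Since such rays span an open subset of $\mathfrak{a}_0$, the polynomial is identically zero.

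\emph{Existence.} I would use the decomposition of $\Omega^T_{\chi,\pi}(\tilde P, \lambda)$ provided by Corollary~\ref{prop:prod-Eis-3}: up to an error bounded by $\rho(\lambda)\,e^{-\epsilon\|T\|}$ in operator norm, its matrix coefficients are $\omega^T(\lambda,\lambda,\phi,\phi')$. The explicit formula for $\omega^T$ displays it as a sum over $(P_1, t, t')$ of terms $\frac{e^{\langle t\lambda - t'\lambda, T\rangle}}{\theta_{P_1}(t\lambda - t'\lambda)}$ times matrix coefficients of intertwining operators. The collection of exponentials in $t\lambda - t'\lambda$ is (the relevant portion of) a $(G,M)$-family, so by Arthur's formalism (developed further in \S\ref{sec:conv}) the total sum is smooth in $\lambda$ and polynomial in $T$. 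Integrating against $f$, against $e^{\langle \lambda, H\rangle}$ over $i(\mathfrak{a}^G_M)^*$, and finally summing over $(w,\pi) \in \Gamma$ as in \eqref{eqn:psi-Gamma} yields the polynomial-in-$T$ function $p^T_\Gamma(H)$, smooth in $H$ by the Fourier integral.

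\emph{Verification.} Property~(i) is obtained as follows: the difference $p^T(H) - \sum_\Gamma p^T_\Gamma(H) e^{\langle X_\Gamma, H\rangle}$ is polynomial in $T$ of degree $\leq d_0$ and, on the region $d_0(T) > C(1+\|H\|)$, exponentially small in $d_0(T)$ by construction together with \eqref{eqn:p-Gamma-1}; the uniqueness argument forces it to vanish identically. Bound~\eqref{eqn:p-Gamma-1} is built into the construction via the error estimate of Corollary~\ref{prop:prod-Eis-3}. For \eqref{eqn:p-Gamma-2}, the identity $p^T_\Gamma = \psi^T_\Gamma - (\psi^T_\Gamma - p^T_\Gamma)$ bounds the right-hand side on the region $d_0(T) > C(1+\|H\|)$ by Lemma~\ref{prop:psi-Gamma-majoration} and \eqref{eqn:p-Gamma-1}; Lagrange interpolation in the polynomial variable $T$ extends the bound to arbitrary $T$, picking up the factor $(1+\|H\|)^{d_0}$. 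The evaluations $J^T_\chi(f) = \sum_\Gamma p^T_\Gamma(0)$ and the temperedness of $p^T_\Gamma$ then follow from Proposition~\ref{prop:majoration-2} and \eqref{eqn:p-Gamma-2} respectively.

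The main obstacle is the extraction of the polynomial-in-$T$ part from $\omega^T$: the na\"ive summation has singular factors $\theta_{P_1}^{-1}$, and even after $(G, M)$-cancellation one must verify that the Fourier integral over $i(\mathfrak{a}^G_M)^*$ converges to a smooth function of $H$ whose derivatives are polynomially bounded uniformly in $T$.
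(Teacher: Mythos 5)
Your uniqueness argument, and your derivation of \eqref{eqn:p-Gamma-2} from \eqref{eqn:p-Gamma-1}, Lemme \ref{prop:psi-Gamma-majoration} and Lagrange interpolation in the polynomial variable $T$, are correct and match the standard reasoning. The existence step, however, contains a genuine gap, and it is exactly the point where your route diverges from the paper's. The paper proves this proposition by invoking the purely real-analytic extraction result \cite[Proposition 5.1]{Ar82-Eis1}: given smooth functions $\psi^T_\Gamma$ indexed by distinct real exponents $X_\Gamma$, whose derivatives satisfy the uniform bound of Lemme \ref{prop:psi-Gamma-majoration}, and such that $\sum_\Gamma \psi^T_\Gamma(H)e^{\angles{X_\Gamma,H}}$ coincides with a polynomial of degree $\leq d_0$ in $T$ on the region $d_0(T) > C(1+\|H\|)$, one extracts the polynomial approximants $p^T_\Gamma$ by difference-operator arguments in $T$ and $H$. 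No spectral input beyond Propositions \ref{prop:majoration-1} and \ref{prop:majoration-2} is needed at this stage.

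Your substitute --- defining $p^T_\Gamma(H)$ by replacing $\Omega^T_{\chi,\pi}$ with $\omega^T_{\chi,\pi}$ via Corollaire \ref{prop:prod-Eis-3} --- does not produce functions that are polynomial in $T$. The expression \eqref{eqn:omega-def} contains, for each $s\neq 1$ in $W(M)$, oscillatory factors $e^{\angles{s\lambda-\lambda,T}}$ that are bounded but non-polynomial in $T$ when $\lambda\in i\mathfrak{a}_M^*$; and the $(G,M)$-family limit $\lambda'\to\lambda$ yields $\sum_S c^S_M(T,(s-1)\lambda)\,d'_S(\lambda_L,(s-1)\lambda)$, where $c^S_M(T,\mu)$ is polynomial in $T$ only at $\mu=0$. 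Converting these oscillatory contributions into genuine polynomials requires integrating over $\lambda$, the change of variables by $(s-1)$, and the asymptotic analysis of \cite[pp.~1305--1308]{Ar82-Eis1} --- and that analysis only identifies the polynomial part asymptotically as $T\xrightarrow{P_0}\infty$, so to conclude one still needs the extraction lemma you are trying to bypass. This is also why the paper postpones the inner-product formula to Théorème \ref{prop:P^T(B)-asymp} and \S\ref{sec:formule-explicite}: it is used there to \emph{compute} $P^T(B)=\sum_\Gamma p^T_\Gamma(\beta)$ after the $p^T_\Gamma$ have been constructed abstractly, not to construct them. You flag this obstacle yourself in your closing remark; it is not a technical residue but the heart of the proposition.
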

\begin{proof}
  Il découle du résultat général \cite[Proposition 5.1]{Ar82-Eis1}.
\end{proof}

Soit $V$ est un $\R$-espace vectoriel, on note $\mathcal{S}(V)$ l'espace des fonctions de Schwartz sur $V$. Soient $\Gamma \in \mathcal{E}$, $\beta \in \mathcal{S}(\mathfrak{h}^1)$. Vu la Proposition \ref{prop:p-Gamma}, on peut définir
$$ p^T_\Gamma(\beta) := \int_{\mathfrak{h}^1} p^T_\Gamma(H)\beta(H) \dd H .$$

Pour tout $\epsilon > 0$, on définit la fonction $\beta_\epsilon: H \mapsto \epsilon^{-\dim\mathfrak{h}^1} \beta(\epsilon^{-1}H) $. Supposons de plus que $\int_{\mathfrak{h}^1} \beta(H)\dd H = 1$. C'est un fait standard que
$$ \lim_{\epsilon \to 0} p^T_\Gamma(\beta_\epsilon) = p^T_\Gamma(0). $$

\begin{definition}\label{def:limite-T}
  Soit $P=MU \in \mathfrak{F}(M_0)$. Si $g: \mathfrak{a}_M \to \C$ est une fonction et $L \in \C$, l'expression
  $$ \lim_{T \xrightarrow{P} \infty} g(T) = L  $$
  signifie que, pour tous $\epsilon, \eta > 0$, il existe $R > 0$ tel que $|g(T)-L| < \epsilon$ pourvu que
  \begin{itemize}
    \item $\angles{\alpha, T} > R$ pour tout $\alpha \in \Sigma_P$;
    \item $\angles{\alpha-\eta\beta, T} > 0$ pour tous $\alpha, \beta \in \Sigma_P$.
  \end{itemize}
\end{definition}
Cela est une version précise de la notion ``$T \to \infty$ fortement dans $\mathfrak{a}_P^+$'' dans \cite[p.1272]{Ar82-Eis1}.

\begin{lemma}
  Pour tout $\beta \in \mathcal{S}(\mathfrak{h}^1)$, on a
  $$ \lim_{T \xrightarrow{P_0} \infty} \left( \int_{\mathfrak{h}^1} \psi^T_\Gamma(H)\beta(H)\dd H - p^T_\Gamma(\beta) \right) = 0. $$
\end{lemma}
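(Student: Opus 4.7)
Le plan est de ramener l'assertion à la majoration ponctuelle \eqref{eqn:p-Gamma-1}, en contrôlant l'intégrale sur les $H$ de grande taille à l'aide de la décroissance de Schwartz de $\beta$. Écrivons d'abord
$$ \int_{\mathfrak{h}^1} \psi^T_\Gamma(H)\beta(H)\dd H - p^T_\Gamma(\beta) = \int_{\mathfrak{h}^1} (\psi^T_\Gamma(H) - p^T_\Gamma(H))\beta(H)\dd H. $$
Soit $C$ la constante apparaissant dans la Proposition \ref{prop:p-Gamma}. Fixons $\eta>0$ (celui de la Définition \ref{def:limite-T}); par définition de $T \xrightarrow{P_0} \infty$, il existe $c_\eta>0$ telle que $\|T\| \leq c_\eta \, d_0(T)$ dès que les inégalités de la Définition \ref{def:limite-T} sont vérifiées. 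Pour chaque $T$ considéré on partage l'intégration en deux domaines
$$ \Omega_1(T) := \{H \in \mathfrak{h}^1 : \|H\| \leq (2C)^{-1} d_0(T) - 1\}, \qquad \Omega_2(T) := \mathfrak{h}^1 \setminus \Omega_1(T). $$

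Sur $\Omega_1(T)$, la condition $d_0(T) > C(1+\|H\|)$ est satisfaite, de sorte que la majoration \eqref{eqn:p-Gamma-1} (appliquée avec $D=\identity$) donne
$$ \left| \int_{\Omega_1(T)} (\psi^T_\Gamma - p^T_\Gamma)\beta \dd H \right| \leq c_0 \|\beta\|_{L^1} e^{-\epsilon d_0(T)} (1+\|T\|)^{d_0}. $$
Puisque $\|T\| \leq c_\eta d_0(T)$ dans le régime considéré, le majorant est dominé par $e^{-\epsilon d_0(T)}$ fois un polynôme en $d_0(T)$, et tend donc vers $0$ lorsque $T \xrightarrow{P_0} \infty$.

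Sur $\Omega_2(T)$ on ne dispose plus de la décroissance exponentielle, mais on utilise la décroissance de Schwartz de $\beta$: pour tout entier $N$,
$$ |\beta(H)| \leq c_N (1+\|H\|)^{-N}. $$
Combinée à la majoration uniforme $|\psi^T_\Gamma(H)| \leq c (1+\|T\|)^{d_0}$ du Lemme \ref{prop:psi-Gamma-majoration} et à la majoration polynomiale $|p^T_\Gamma(H)| \leq c (1+\|H\|)^{d_0}(1+\|T\|)^{d_0}$ de \eqref{eqn:p-Gamma-2}, on obtient
$$ \left| \int_{\Omega_2(T)} (\psi^T_\Gamma - p^T_\Gamma)\beta \dd H \right| \leq c (1+\|T\|)^{d_0} \int_{\|H\| > (2C)^{-1}d_0(T)-1} (1+\|H\|)^{d_0 - N}\dd H. $$
En choisissant $N$ assez grand par rapport à $d_0$ et $\dim\mathfrak{h}^1$, et en utilisant à nouveau $\|T\| \leq c_\eta d_0(T)$, ce terme tend lui aussi vers $0$ lorsque $T \xrightarrow{P_0} \infty$. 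La combinaison des deux majorations conclut. L'étape la plus délicate est simplement de s'assurer que le paramètre $\eta$ de la Définition \ref{def:limite-T} garantit la comparaison $\|T\| \ll d_0(T)$ dans le régime limite, ce qui permet à la décroissance exponentielle en $d_0(T)$ de dominer la croissance polynomiale en $\|T\|$.
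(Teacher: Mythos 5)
Votre démonstration est correcte et suit essentiellement la même démarche que celle du texte : on découpe l'intégrale selon que $d_0(T) > C(1+\|H\|)$ ou non, on applique la décroissance exponentielle \eqref{eqn:p-Gamma-1} sur le premier domaine, et sur le second on combine le Lemme \ref{prop:psi-Gamma-majoration}, la majoration \eqref{eqn:p-Gamma-2} et la décroissance de Schwartz de $\beta$, en utilisant la comparaison $\|T\| \ll d_0(T)$ issue de la Définition \ref{def:limite-T}. Seuls diffèrent des détails cosmétiques (seuil de découpage, extraction d'un facteur $\|T\|^{-n}$ plutôt qu'une intégrale sur un domaine fuyant).
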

\begin{proof}
  C'est un exercice en analyse. On a
  \begin{align*}
    \left| \int_{\mathfrak{h}^1} \psi^T_\Gamma(H)\beta(H)\dd H - p^T_\Gamma(\beta) \right| & \leq \int_{\mathfrak{h}_1} |\psi_\Gamma^T(H) - p_\Gamma^T(H)|\cdot |\beta(H)| \dd H \\
    & = \int_{\{H : d_0(T) \leq C(1+\|H\|)\}} (\cdots) + \int_{\{H : d_0(T) > C(1+\|H\|)\}} (\cdots).
  \end{align*}

  Considérons la première intégrale. Puisqu'on considère la limite $T \xrightarrow{P_0} \infty$, d'après \eqref{eqn:p-Gamma-2} et le Lemme \ref{prop:psi-Gamma-majoration}, $|\psi^T_\Gamma(H) - p^T_\Gamma(H)|$ est bornée par $C'(1+\|T\|)^{d_0}(1+\|H\|)^{d_0}$ pour une constante $C'$. Vu la Définition \ref{def:limite-T}, on peut borner $\|T\|$ par $d_0(T)$ multiplié par une certaine constante et il existe une constante $C''$ telle que
  $$ \|T\| \leq C'' (1+\|H\|) $$
  pour tout $H$ dans le domaine d'intégration. Soit $n \in \Z$, $n > 0$. La première intégrale est donc bornée par
  $$ C_n \|T\|^{-n} \int_{\mathfrak{h}^1} |\beta(H)| (1+\|H\|)^{2 d_0 + n} \dd H $$
  où $C_n$ est une autre constante. Comme $\beta \in \mathcal{S}(\mathfrak{h}^1)$, cette expression tend vers $0$.

  Pour la deuxième intégrale, on utilise \eqref{eqn:p-Gamma-1}. Elle est bornée par
  $$ c_1 e^{-\epsilon d_0(T)} (1+\|T\|)^{d_0} \int_{\mathfrak{h}^1} |\beta(H)| \dd H. $$
  Cela tend vers $0$ lorsque $T \xrightarrow{P_0} \infty$. D'où le lemme.
\end{proof}

\begin{theorem}[{\cite[Theorem 6.3]{Ar82-Eis1}}]\label{prop:P^T(B)-J^T}
  Soit $B \in \mathcal{S}(i\mathfrak{h}^*/i\mathfrak{a}_G^*)^W$. Pour $\pi \in \Pi_\mathrm{unit}(\tilde{M}^1)$ on pose
  $$ B_\pi(\lambda) := B(iY_\pi + \lambda), \quad \lambda \in i(\mathfrak{a}^G_M)^* $$
  qui est bien défini, et pour $\epsilon > 0$ on pose
  $$ B^\epsilon(\lambda) := B(\epsilon\lambda). $$

  Alors Il existe un unique polynôme $P^T(B)$ en $T$ tel que
  $$ \lim_{T \xrightarrow{P_0} \infty} \left( \sum_{\substack{P=MU \\ P \supset P_0}} \sum_{\pi \in \Pi_\mathrm{unit}(\tilde{M}^1)}\; \int_{i(\mathfrak{a}^G_M)^*} \Psi^T_\pi(\lambda)B_\pi(\lambda)\dd\lambda - P^T(B) \right) = 0 . $$

  De plus, si $B(0)=1$ alors
  $$ J^T_\chi(f) = \lim_{\epsilon \to 0} P^T(B^\epsilon). $$
\end{theorem}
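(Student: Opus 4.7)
Mon plan consiste à ramener l'énoncé au lemme précédent par transformation de Fourier, la clé étant d'écrire le membre de gauche sous la forme $\sum_{\Gamma} \int_{\mathfrak{h}^1} \psi^T_\Gamma(H) \beta(H) \dd H$, où $\beta = \mathcal{F}^{-1}(B) \in \mathcal{S}(\mathfrak{h}^1)^W$ est la transformée de Fourier inverse de $B$ (la $W$-invariance et la classe de Schwartz se transportent sans difficulté). Plus précisément, j'applique l'inversion de Fourier $B(\mu) = \int_{\mathfrak{h}^1} \beta(H) e^{\angles{\mu,H}} \dd H$ au point $\mu = iY_\pi + \lambda$, puis j'intervertis les intégrations en $\lambda \in i(\mathfrak{a}^G_M)^*$ et $H \in \mathfrak{h}^1$ par Fubini, la domination uniforme provenant de la Proposition \ref{prop:majoration-1}. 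Comme $\lambda$ ne s'accouple qu'avec la projection de $H$ à $\mathfrak{a}^G_M$, l'intégrale intérieure redonne $\psi^T_\pi(H)$, d'où
$$ \int_{i(\mathfrak{a}^G_M)^*} \Psi^T_\pi(\lambda) B_\pi(\lambda) \dd\lambda = \int_{\mathfrak{h}^1} \psi^T_\pi(H) e^{\angles{iY_\pi, H}} \beta(H) \dd H. $$

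En sommant sur $P \supset P_0$ et $\pi$ (somme effectivement finie d'après la Proposition \ref{prop:finitude-pi}) et en symétrisant grâce à la $W$-invariance de $\beta$ et de la mesure de Haar, les couples $(w,\pi)$ se regroupent selon les classes $\Gamma \in \mathcal{E}$; la définition \eqref{eqn:psi-Gamma} fournit l'identité
$$ \sum_{P \supset P_0} \sum_\pi \int_{i(\mathfrak{a}^G_M)^*} \Psi^T_\pi(\lambda) B_\pi(\lambda) \dd\lambda = \sum_{\Gamma \in \mathcal{E}} \int_{\mathfrak{h}^1} \psi^T_\Gamma(H) \beta(H) \dd H. $$
Le lemme précédent donne, pour chaque $\Gamma$, $\lim_{T \xrightarrow{P_0} \infty} \left( \int_{\mathfrak{h}^1} \psi^T_\Gamma(H)\beta(H)\dd H - p^T_\Gamma(\beta) \right) = 0$. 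Je pose donc $P^T(B) := \sum_{\Gamma \in \mathcal{E}} p^T_\Gamma(\beta)$; c'est une somme finie de polynômes en $T$ de degré $\leq d_0$ d'après la Proposition \ref{prop:p-Gamma}, donc un polynôme en $T$. L'unicité résulte de ce qu'un polynôme qui tend vers zéro lorsque $T \xrightarrow{P_0} \infty$ doit s'annuler identiquement, le cône correspondant étant d'intérieur non vide dans $\mathfrak{a}_0$.

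Pour la seconde assertion, un calcul direct montre que la transformée de Fourier inverse de $B^\epsilon$ est exactement $\beta_\epsilon(H) = \epsilon^{-\dim\mathfrak{h}^1} \beta(\epsilon^{-1}H)$, et l'hypothèse $B(0) = 1$ se traduit (avec la normalisation choisie pour la transformée de Fourier) par $\int_{\mathfrak{h}^1} \beta \dd H = 1$. Le fait standard rappelé avant la Définition \ref{def:limite-T} donne $\lim_{\epsilon \to 0} p^T_\Gamma(\beta_\epsilon) = p^T_\Gamma(0)$ pour chaque $\Gamma$; en sommant et en invoquant la formule $J^T_\chi(f) = \sum_\Gamma p^T_\Gamma(0)$ de la Proposition \ref{prop:p-Gamma}, on obtient $\lim_{\epsilon \to 0} P^T(B^\epsilon) = J^T_\chi(f)$. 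L'obstacle principal sera de justifier rigoureusement l'interversion de Fubini à la première étape (il faut contrôler uniformément en $\lambda$ le produit $\Psi^T_\pi(\lambda) \beta(H)$, ce qui revient à exploiter à la fois la Proposition \ref{prop:majoration-1} et la décroissance Schwartz de $\beta$) et de suivre soigneusement les conventions de transformée de Fourier entre $\mathfrak{h}^1$, ses sous-espaces $\mathfrak{a}^G_M$ et leurs duaux, afin que la symétrisation fasse apparaître précisément les $\psi^T_\Gamma$ et non des variantes décalées.
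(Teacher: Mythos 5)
Your proposal is correct and takes essentially the same route as the paper: the paper's own proof likewise picks $\beta\in\mathcal{S}(\mathfrak{h}^1)$ with $B=\hat{\beta}|_{i\mathfrak{h}^*/i\mathfrak{a}_G^*}$, sets $P^T(B):=\sum_{\Gamma\in\mathcal{E}}p^T_\Gamma(\beta)$, proves uniqueness by the vanishing of a polynomial along the cone $T\xrightarrow{P_0}\infty$, and derives both assertions from the preceding lemma together with la Proposition \ref{prop:p-Gamma} et le fait $\lim_{\epsilon\to 0}p^T_\Gamma(\beta_\epsilon)=p^T_\Gamma(0)$. You simply make explicit the Fourier inversion, Fubini and $W$-symmetrisation steps that the paper compresses into \og l'assertion résulte de ce qui précède \fg, and your justifications (Proposition \ref{prop:majoration-1} for the domination, Proposition \ref{prop:finitude-pi} for the finiteness of the sum over $\pi$) are the right ones.
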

\begin{proof}
  Montrons d'abord l'unicité. Soient $P^T(B)$, $Q^T(B)$ deux polynômes en $T$ satisfaisant à la propriété du Théorème, alors
  $$ \lim_{T \xrightarrow{P_0} \infty} (P^T(B) - Q^T(B))=0, $$
  or cela entraîne que $P^T(B)=Q^T(B)$.

  Pour l'existence, on prend $\beta \in S(\mathfrak{h}^1)$ tel que $B = \hat{\beta}|_{i\mathfrak{h}^*/i\mathfrak{a}_G^*}$. Alors $B^\epsilon = \widehat{\beta_\epsilon}|_{i\mathfrak{h}^*/i\mathfrak{a}_G^*}$. Posons
  $$ P^T(B) := \sum_{\Gamma \in \mathcal{E}} p^T_\Gamma(\beta). $$
  L'assertion résulte de ce qui précèdent.
\end{proof}

Maintenant on est en mesure d'employer les résultats du \S\ref{sec:prod-Eis}. Soient $T \in \mathfrak{a}_0$, $\lambda,\lambda' \in i\mathfrak{a}_M^*$. Définissons l'opérateur suivant de $\mathcal{A}^2(\tilde{P})_{\chi,\pi}$ sur lui-même:
\begin{equation}\label{eqn:omega-def}
  \omega_{\chi,\pi}^T(\tilde{P},\lambda', \lambda) := \sum_{\substack{P_1=M_1U_1 \\ P_1 \supset P_0}} \sum_{\substack{t \in W(\mathfrak{a}_M, \mathfrak{a}_{M_1}) \\ t' \in W(\mathfrak{a}_M, \mathfrak{a}_{M_1})}} M(t,\lambda)^{-1} M(t',\lambda') \frac{e^{\angles{t'\lambda'-t\lambda, T}}}{\theta_{P_1}(t'\lambda'-t\lambda)} .
\end{equation}

Vu les résultats dans \S\ref{sec:prod-Eis}, cet opérateur n'a pas de pôles pour $\lambda,\lambda' \in i\mathfrak{a}_M^*$. Posons
$$ \omega_{\chi,\pi}^T(\tilde{P},\lambda) := \omega_{\chi,\pi}^T(\tilde{P},\lambda,\lambda), \quad \lambda \in i\mathfrak{a}_M^* . $$

\begin{theorem}[{\cite[Theorem 7.1]{Ar82-Eis1}}]\label{prop:P^T(B)-asymp}
  Soit $B \in C_c^\infty(i\mathfrak{h}^*/i\mathfrak{a}_G^*)^W$. Alors $P^T(B)$ est l'unique polynôme en $T$ tel que
  $$ \lim_{T \xrightarrow{P_0} \infty} \left( \sum_{P \supset P_0} \sum_{\pi \in \Pi_\mathrm{unit}(\tilde{M}^1)}\; |\mathcal{P}(M)|^{-1} \int_{i(\mathfrak{a}^G_M)^*} \Tr(\omega^T_{\chi,\pi}(\tilde{P},\lambda)\mathcal{I}_{\tilde{P}}(\lambda, f)_{\chi,\pi}) B_\pi(\lambda) \dd\lambda - P^T(B) \right) = 0 . $$
\end{theorem}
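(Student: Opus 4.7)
\emph{Plan de démonstration.} L'unicité est immédiate comme dans le Théorème~\ref{prop:P^T(B)-J^T}: la différence de deux polynômes candidats tendrait vers $0$ quand $T \xrightarrow{P_0} \infty$, ce qui force leur égalité. Pour l'existence, le plan est de reprendre le même polynôme $P^T(B)$ fourni par le Théorème~\ref{prop:P^T(B)-J^T} et de vérifier qu'il satisfait aussi la présente conclusion. Vu la définition~\eqref{eqn:Psi}, il me suffira d'établir que
$$ \lim_{T \xrightarrow{P_0} \infty} \sum_{\substack{P=MU \\ P \supset P_0}} \sum_{\pi \in \Pi_\mathrm{unit}(\tilde{M}^1)} |\mathcal{P}(M)|^{-1} \int_{i(\mathfrak{a}^G_M)^*} \Tr\bigl( (\Omega^T_{\chi,\pi}(\tilde{P},\lambda) - \omega^T_{\chi,\pi}(\tilde{P},\lambda)) \mathcal{I}_{\tilde{P}}(\lambda,f)_{\chi,\pi} \bigr) B_\pi(\lambda) \dd\lambda = 0. $$

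L'étape centrale consistera à identifier les coefficients matriciels des deux opérateurs. Par construction, $(\Omega^T_{\chi,\pi}(\tilde{P},\lambda)\phi|\phi') = (\Lambda^T E(\phi,\lambda)|\Lambda^T E(\phi',\lambda))_{\tilde{G}}$; quant à $\omega^T_{\chi,\pi}(\tilde{P},\lambda)$ donné par~\eqref{eqn:omega-def}, un calcul direct utilisant l'unitarité de $M(t,\lambda)$ sur $i\mathfrak{a}_M^*$ et la relation $-\bar\lambda = \lambda$ sur l'axe imaginaire montrera que $(\omega^T_{\chi,\pi}(\tilde{P},\lambda)\phi|\phi') = \omega^T(\lambda,\lambda,\phi,\phi')$ au sens du \S\ref{sec:prod-Eis}. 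Le Corollaire~\ref{prop:prod-Eis-3} fournira alors, pour $T \in \mathcal{T}$, la majoration opératorielle
$$ \|\Omega^T_{\chi,\pi}(\tilde{P},\lambda) - \omega^T_{\chi,\pi}(\tilde{P},\lambda)\|_{\mathrm{op}} \leq \rho(\lambda) e^{-\epsilon \|T\|} $$
sur la composante de dimension finie $\mathcal{A}^2(\tilde{P})_{\chi,\pi,\Gamma}$ associée aux $\tilde{K}$-types de $f$.

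Le passage délicat sera de convertir cette majoration en norme d'opérateur en une majoration sur la trace de la composée avec $\mathcal{I}_{\tilde{P}}(\lambda,f)_{\chi,\pi}$. La $\tilde{K}$-finitude de $f$ y joue un rôle crucial: elle rend $\mathcal{I}_{\tilde{P}}(\lambda,f)_{\chi,\pi}$ de rang fini uniformément borné en $\lambda$, avec norme d'opérateur aussi bornée; par ailleurs, la Proposition~\ref{prop:finitude-pi} assure que la somme sur $\pi$ est finie. La norme de trace de la composée sera donc dominée par une constante fois $\rho(\lambda) e^{-\epsilon \|T\|}$. Enfin, comme $B \in C_c^\infty$, l'intégration en $\lambda$ porte sur un compact et la somme en $P$ est finie. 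J'obtiendrai ainsi une majoration globale par $C_f \cdot e^{-\epsilon \|T\|}$, qui tend vers $0$ quand $T \xrightarrow{P_0} \infty$ puisque les conditions de la Définition~\ref{def:limite-T} impliquent $\|T\|$ comparable à $d_0(T) \to +\infty$. La conclusion résultera alors du Théorème~\ref{prop:P^T(B)-J^T}.
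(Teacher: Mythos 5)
Votre démonstration est correcte et suit essentiellement la même voie que celle du texte : on identifie les coefficients matriciels de $\Omega^T_{\chi,\pi}(\tilde{P},\lambda)$ et de $\omega^T_{\chi,\pi}(\tilde{P},\lambda)$ avec les produits scalaires du \S\ref{sec:prod-Eis}, on applique le Corollaire \ref{prop:prod-Eis-3} pour majorer la différence par $\rho(\lambda)e^{-\epsilon\|T\|}$, puis on conclut par la finitude de la somme sur $(P,\pi)$ (garantie par la $\tilde{K}$-finitude de $f$) et le support compact de $B$. Vous explicitez simplement le passage de la majoration des coefficients matriciels à celle de la trace, que le texte laisse implicite.
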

Observons que la somme sur $\pi$ est finie et les traces se calculent dans des espaces de dimension finie, grâce à la $\tilde{K}$-finitude de $f$.
\begin{proof}
  Rappelons d'abord que
  $$ \sum_{P,\pi} \int_{i(\mathfrak{a}^G_M)^*} \Psi^T_\pi(\lambda)B_\pi(\lambda) \dd\lambda = \sum_{P,\pi} |\mathcal{P}(M)|^{-1} \int_{i(\mathfrak{a}^G_M)^*} \Tr(\Omega^T_{\chi,\pi}(\tilde{P},\lambda)\mathcal{I}_{\tilde{P}}(\lambda,f)_{\chi,\pi}) B_\pi(\lambda) \dd\lambda .$$

  Vu la définition de $\Omega^T_{\chi,\pi}(\tilde{P},\lambda)$ et le Corollaire \ref{prop:prod-Eis-3}, il existe une fonction localement bornée $\rho_\pi: i\mathfrak{a}_M^* \to \R_{\geq 0}$ telle que la différence
  $$ \int_{i(\mathfrak{a}^G_M)^*} \Tr(\Omega^T_{\chi,\pi}(\tilde{P},\lambda)\mathcal{I}_{\tilde{P}}(\lambda,f)_{\chi,\pi}) B_\pi(\lambda) \dd\lambda - \int_{i(\mathfrak{a}^G_M)^*} \Tr(\omega^T_{\chi,\pi}(\tilde{P},\lambda)\mathcal{I}_{\tilde{P}}(\lambda,f)_{\chi,\pi}) B_\pi(\lambda) \dd\lambda $$
  est bornée par
  $$ e^{-\epsilon\|T\|} \int_{i(\mathfrak{a}^G_M)^*} \rho_\pi(\lambda) B_\pi(\lambda) \dd\lambda. $$

  Puisque la somme sur $P,\pi$ est finie et $B$ est à support compact, l'assertion en résulte.
\end{proof}

\section{Formule explicite}\label{sec:formule-explicite}
Fixons toujours $f \in \mathcal{H}(\tilde{G}^1)$, $\chi \in \mathfrak{X}^{\tilde{G}}$ et $B \in C_c^\infty(i\mathfrak{h}^*/i\mathfrak{a}_G^*)^W$ tel que $B(0)=1$. Dans cette section, nous nous proposons d'obtenir une formule explicite pour $P^T(B)$, et puis évaluer $\lim_{\epsilon \to 0} P^T(B^\epsilon)$, qui est égal à $J^T_\chi(f)$.

\paragraph{Les opérateurs $M_{P_1|P}(w,\lambda)$}
Soient $P=MU$, $P_1 = M_1 U_1 \in \mathcal{F}(M_0)$, $w \in W(\mathfrak{a}_M, \mathfrak{a}_{M_1})$. Nous allons ḍéfinir une famille d'opérateurs d'entrelacement $M_{P_1|P}(w,\lambda): \mathcal{I}_{\tilde{P}}(\lambda) \to \mathcal{I}_{\tilde{P}_1}(w\lambda)$ qui est méromorphe en $\lambda \in \mathfrak{a}_{M,\C}^*$.

Considérons d'abord le cas $w=1$, alors $M=M_1$. On pose
$$ (M_{P_1|P}(1,\lambda)\phi)(\tilde{x}) := \int_{(U_1 \cap U)(\A) \backslash U_1(\A)} \phi(u\tilde{x}) e^{\angles{\lambda, H_P(ux)-H_{P_1}(x)}} \dd u $$
pour tous $\phi \in \mathcal{A}^2(\tilde{P})$, $\tilde{x} \in \tilde{G}$. Cette intégrale est absolument convergente si $\angles{\Re \lambda, \alpha^\vee} \gg 0$ pour tout $\alpha \in \Delta_P$. Écrivons aussi $M_{P_1|P}(\lambda) := M_{P_1|P}(1,\lambda)$.

Considérons le cas général. Fixons un représentant $\hat{w} \in G(F)$ de $w$. Définissons $A(w,\lambda): \mathcal{A}^2(w^{-1}\tilde{P}_1 w) \rightiso \mathcal{A}^2(\tilde{P}_1)$ par
$$ (A(w,\lambda)\phi)(\tilde{x}) = \phi(\hat{w}^{-1} \tilde{x}) e^{\angles{\lambda, (w^{-1}-1)T_0}}. $$

Posons
$$ M_{P_1 | P}(w,\lambda) := A(w,\lambda) M_{w^{-1} P_1 w | P}(1, \lambda). $$
C'est défini par une intégrale absolument convergente si $\Re\lambda$ appartient à un cône ouvert dans $\mathfrak{a}_M^*$. Nous laissons le soin au lecteur de vérifier que ladite définition coïncide avec celle dans \cite{Ar82-Eis2}, à savoir
$$ (M_{P_1|P}(w,\lambda)\phi)(\tilde{x}) = \int_{(U_1 \cap wUw^{-1})(\A) \backslash U_1(\A)} \phi(\hat{w}^{-1} u\tilde{x}) e^{\angles{\lambda, H_P(\hat{w}^{-1}ux)} - \angles{w\lambda, H_{P_1}(x)}} \dd u . $$

On vérifie que cela ne dépend que de la classe $w W^M_0$. Lorsque $P,P_1$ sont standards, ils sont déterminés par $M$ et $w$, et on voit que
$$ M(w,\lambda) = M_{P_1|P}(w,\lambda). $$

Des arguments standards entraînent que les opérateurs $M_{P_1|P}(w,\lambda)$ satisfont aux mêmes propriétés analytiques que $M(w,\lambda)$. En particulier, ils se prolongent en des opérateurs méromorphes en $\lambda$ et réguliers pour $\lambda \in i\mathfrak{a}_M^*$, et on a les équations fonctionnelles suivantes.
\begin{enumerate}
  \item Soient $P, P_1, P_2 \in \mathcal{F}(M_0)$, $w \in W(\mathfrak{a}_M, \mathfrak{a}_{M_1})$ et $w_1 \in W(\mathfrak{a}_{M_1}, \mathfrak{a}_{M_2})$, alors
    $$ M_{P_2|P}(w_1 w, \lambda) = M_{P_2|P_1}(w_1, w\lambda) M_{P_1|P}(w,\lambda); $$
  \item Soient $P,P_1,w$ comme précédemment et $\phi \in \mathcal{A}^2(\tilde{P})$, on a
    $$ E(\phi,\lambda) = E(M_{P_1|P}(w,\lambda)\phi, w\lambda). $$
\end{enumerate}

Maintenant, soit $Q \in \mathcal{P}(M)$. Il existe un unique parabolique standard $P_1$ conjugué à $Q$, disons $Q = t^{-1} P_1 t$ où $t$ est unique en tant qu'un élément de $W^G_0/W^M_0$. Pour $T \in \mathfrak{a}_0$, on peut bien définir
$$ Y_Q(T) := \text{ la projection de } t^{-1}(T-T_0) + T_0 \text{ sur } \mathfrak{a}_M . $$

\begin{proposition}\label{prop:omega-prod-s}
  Soient $P=MU \in \mathcal{F}(M_0)$, $\pi \in \Pi_\mathrm{unit}(\tilde{M}^1)$ et $\lambda \in i\mathfrak{a}_M^*$, alors $\omega^T_{\chi,\pi}(\tilde{P},\lambda)$ est égal à la valeur en $\lambda'=\lambda$ de
  $$ \sum_{s \in W(M)} \sum_{Q \in \mathcal{P}(M)} M_{Q|P}(\lambda)^{-1} M_{Q|P}(s,\lambda') \frac{e^{\angles{s\lambda'-\lambda, Y_Q(T)}}}{\theta_Q(s\lambda'-\lambda)}. $$
\end{proposition}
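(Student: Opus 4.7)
The plan is to reparametrize the triple sum defining $\omega^T_{\chi,\pi}(\tilde P, \lambda', \lambda)$ in \eqref{eqn:omega-def} via a bijection with the index set appearing on the right-hand side, and then match the summands using the functional equation for the operators $M_{P_1|P}(w,\lambda)$.

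First I set up the bijection. Every $Q \in \mathcal{P}(M)$ is conjugate to a unique standard parabolic $P_1$ by a unique element $t \in W^G_0/W^M_0$, namely the one satisfying $Q = t^{-1} P_1 t$. Thus the pairs $(P_1, t)$ with $P_1 \supset P_0$ standard and $t \in W(\mathfrak{a}_M, \mathfrak{a}_{M_1})$ are in bijection with $\mathcal{P}(M)$. For the second Weyl element $t'$ sharing the same standard image, I write $t' = ts$ where $s := t^{-1}t' \in W(M)$; this extends the bijection to $(P_1, t, t') \leftrightarrow (Q, s) \in \mathcal{P}(M) \times W(M)$.

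Next I translate each factor of the summand. Using the identity $M(t, \lambda) = A(t, \lambda) M_{Q|P}(\lambda)$ (a direct consequence of the definition of $M_{P_1|P}(t,\lambda)$ just given), and decomposing
$$ M(t', \lambda') = M_{P_1|P}(ts, \lambda') = M_{P_1|Q}(t, s\lambda')\, M_{Q|P}(s, \lambda') = A(t, s\lambda')\, M_{Q|P}(s, \lambda') $$
via the functional equation (noting that $M_{P_1|Q}(t, s\lambda') = A(t, s\lambda')\, M_{Q|Q}(s\lambda') = A(t, s\lambda')$), one obtains
$$ M(t,\lambda)^{-1} M(t',\lambda') = M_{Q|P}(\lambda)^{-1}\, \bigl[A(t,\lambda)^{-1} A(t, s\lambda')\bigr]\, M_{Q|P}(s, \lambda'). $$
The operator in brackets is a scalar in $\C$ times the identity, as a direct computation from the definition of $A$ shows. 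The theta factor transforms by $W$-equivariance: $\theta_{P_1}(t'\lambda' - t\lambda) = \theta_{P_1}(t(s\lambda' - \lambda)) = \theta_Q(s\lambda' - \lambda)$. The exponential $e^{\angles{t'\lambda'-t\lambda, T}}$ becomes $e^{\angles{s\lambda'-\lambda,\; t^{-1}T}}$ by adjointness; combining it with the scalar coming from $A(t,\lambda)^{-1}A(t, s\lambda')$, and using that $s\lambda' - \lambda \in \mathfrak{a}_M^*$ pairs only with the projection onto $\mathfrak{a}_M$, one checks that the total exponent is exactly $\angles{s\lambda' - \lambda, Y_Q(T)}$ in view of the definition $Y_Q(T) = \mathrm{proj}_{\mathfrak{a}_M}\bigl(t^{-1}(T-T_0) + T_0\bigr)$.

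Assembling these identifications, the summand indexed by $(P_1, t, t')$ becomes precisely the summand indexed by $(Q, s)$ in the target formula, and specializing to $\lambda' = \lambda$ gives the statement. The main delicacy lies in the bookkeeping of the $T_0$-shifts produced by the $A$-scalar so that it matches the translation term in the definition of $Y_Q(T)$; the reparametrization of indices, the $W$-equivariance of $\theta_{P_1}$, and the application of the functional equation for $M_{P_1|P}(w,\lambda)$ are all routine once the setup of \S\ref{sec:prod-Eis} and the construction of the operators $M_{P_1|P}(w,\lambda)$ are in place.
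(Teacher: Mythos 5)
Your proposal follows exactly the paper's own argument: the paper likewise substitutes $t'=ts$ with $s\in W(M)$, sets $Q=t^{-1}P_1t$, and reduces the claim to the same three transformation identities (for the intertwining operators with the $T_0$-scalar, for $\theta_{P_1}$, and for the exponent, the latter two combining to produce $Y_Q(T)$), which it states and leaves to the reader while you additionally sketch their verification via the functional equation and the definition of $A(t,\cdot)$. This is the same proof, with slightly more detail on the operator identity.
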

\begin{proof}
  Dans la définition \eqref{eqn:omega-def} de $\omega^T_{\chi,\pi}(\tilde{P},\lambda)$, on écrit les éléments $t' \in W(\mathfrak{a}_M, \mathfrak{a}_{M_1})$ comme $t'=ts$ où $s \in W(\mathfrak{a}_M,\mathfrak{a}_M) = W(M)$. Supposons désormais que $\lambda,\lambda' \in i\mathfrak{a}_M^*$ et posons $Q := t^{-1} P_1 t$. Pour conclure, il reste à vérifier que
  \begin{align*}
    M_{P_1|P}(t,\lambda)^{-1} M_{P_1|P}(ts,\lambda') &= M_{Q|P}(\lambda)^{-1} M_{Q|P}(s,\lambda') e^{\angles{s\lambda'-\lambda, T_0 - t^{-1}T_0}}, \\
    \theta_{P_1}(ts\lambda' -t\lambda) & = \theta_Q(s\lambda'-\lambda), \\
    \angles{ts\lambda' - t\lambda, T} + \angles{s\lambda'-\lambda, T_0 - t^{-1} T_0} & = \angles{s\lambda'-\lambda, Y_Q(T)}.
  \end{align*}
\end{proof}

\begin{corollary}
  Conservons les notations précédentes, $\Tr(\omega^T_{\chi,\pi}(\tilde{P},\lambda) \mathcal{I}_{\tilde{P}}(\lambda,f)_{\chi,\pi})$ est égal à la valeur en $\lambda'=\lambda$ de
  \begin{equation}\label{eqn:cd-prim}
    \sum_{s \in W(M)} \sum_{Q \in \mathcal{P}(M)} \Tr(M_{Q|P}(\lambda)^{-1} M_{Q|P}(s,\lambda')\mathcal{I}_{\tilde{P}}(\lambda,f)_{\chi,\pi}) \frac{e^{\angles{s\lambda'-\lambda, Y_Q(T)}}}{\theta_Q(s\lambda'-\lambda)}.
  \end{equation}
\end{corollary}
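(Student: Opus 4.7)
Le plan est d'appliquer directement la Proposition \ref{prop:omega-prod-s} et de distribuer la trace. L'argument sera essentiellement formel, et je n'anticipe pas d'obstacle sérieux; le seul point délicat concerne l'interprétation de l'évaluation en $\lambda'=\lambda$.

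D'abord, je remarquerais que, puisque $f \in \mathcal{H}(\tilde{G}^1)$ est $\tilde{K}$-finie, il existe un sous-ensemble fini $\Gamma \subset \Pi_\mathrm{unit}(\tilde{K})$ tel que l'opérateur $\mathcal{I}_{\tilde{P}}(\lambda,f)_{\chi,\pi}$ se factorise par le sous-espace de dimension finie $\mathcal{A}^2(\tilde{P})_{\chi,\pi,\Gamma}$. Comme les opérateurs d'entrelacement $M_{Q|P}(\lambda)$ et $M_{Q|P}(s,\lambda')$ respectent les $\tilde{K}$-types, ils stabilisent ce sous-espace, et il en va donc de même pour chaque terme apparaissant dans la formule de la Proposition \ref{prop:omega-prod-s}. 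Ainsi, toutes les traces considérées se calculent sur un espace commun de dimension finie. Je composerais alors l'identité fournie par la Proposition \ref{prop:omega-prod-s} avec $\mathcal{I}_{\tilde{P}}(\lambda,f)_{\chi,\pi}$ à droite, puis j'appliquerais la trace. La linéarité de la trace, combinée au fait que les facteurs scalaires $e^{\angles{s\lambda'-\lambda,Y_Q(T)}}/\theta_Q(s\lambda'-\lambda)$ sortent de celle-ci, donne immédiatement l'expression \eqref{eqn:cd-prim}.

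Le point qui mérite attention est le suivant: chaque terme individuel de \eqref{eqn:cd-prim} peut présenter un pôle en $\lambda'=\lambda$ à cause du facteur $\theta_Q(s\lambda'-\lambda)^{-1}$, mais la somme totale reste régulière puisque $\omega^T_{\chi,\pi}(\tilde{P},\lambda)$ l'est d'après les résultats du \S\ref{sec:prod-Eis}. En dimension finie, la régularité de la somme des opérateurs entraîne celle de la somme de leurs traces, si bien que l'évaluation en $\lambda'=\lambda$ doit être comprise comme la limite de la somme entière en tant que fonction de $\lambda'$, en parfaite analogie avec l'énoncé de la Proposition \ref{prop:omega-prod-s}.
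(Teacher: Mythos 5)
Votre démonstration est correcte et suit exactement la voie que le papier considère comme immédiate : composer l'identité de la Proposition \ref{prop:omega-prod-s} avec $\mathcal{I}_{\tilde{P}}(\lambda,f)_{\chi,\pi}$, prendre la trace sur l'espace de dimension finie déterminé par la $\tilde{K}$-finitude de $f$, et interpréter l'évaluation en $\lambda'=\lambda$ comme la limite de la somme entière, les pôles individuels des $\theta_Q(s\lambda'-\lambda)^{-1}$ se compensant. Le papier ne donne d'ailleurs aucune preuve de ce corollaire, le tenant pour une conséquence formelle directe de la proposition.
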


\paragraph{Des $(G,M)$-familles}
Fixons $M \in \mathcal{L}(M_0)$, qui n'est pas forcément un Lévi standard pour l'instant, et $\pi \in \Pi_\mathrm{unit}(\tilde{M}^1)$. Rappelons que nous avons défini le sous-ensemble $W^L(M)_\text{reg} \subset W^L(M)$ dans \S\ref{sec:prem}, où $L \in \mathcal{L}(M)$. On a une décomposition
$$ W(M) = \bigsqcup_{L \in \mathcal{L}(M)} W^L(M)_\text{reg}. $$

Soit $s \in W(M)$. Prenons l'unique $L \in \mathcal{L}(M)$ tel que $s \in W^L(M)_\text{reg}$. On a un isomorphisme
\begin{align*}
  \mathfrak{a}_M^* \oplus \mathfrak{a}_L^* & \longrightarrow \mathfrak{a}_M^* \oplus \mathfrak{a}_L^*, \\
  (\lambda, \zeta) & \longmapsto (\Lambda, \lambda_L),
\end{align*}
où $\Lambda := (s-1)\lambda+\zeta$ et $\lambda_L$ est la projection de $\lambda$ sur $\mathfrak{a}_L^*$. Fixons $P \in \mathcal{P}(M)$. Soient $(\lambda, \zeta)$ comme ci-dessus et $T \in \mathfrak{a}_0$, définissons deux familles de fonctions
\begin{align*}
  c_Q(T,\Lambda) & := e^{\angles{\Lambda, Y_Q(T)}}, \\
  d_Q(\lambda_L, \Lambda) & := \Tr(M_{Q|P}(\lambda)^{-1} M_{Q|P}(s, \lambda+\zeta) \mathcal{I}_{\tilde{P}}(\lambda,f)_{\chi,\pi}), \quad Q \in \mathcal{P}(M).
\end{align*}

\begin{lemma}
  Les fonctions $c_Q(T, \Lambda)$, $d_Q(\lambda_L, \Lambda)$ forment des $(G,M)$-familles des fonctions en $\Lambda \in i\mathfrak{a}_M^*$.
\end{lemma}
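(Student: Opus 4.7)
On rappelle que vérifier la propriété de $(G,M)$-famille revient à montrer: (a) la régularité des fonctions $c_Q(T,\cdot)$ et $d_Q(\lambda_L,\cdot)$ sur $i\mathfrak{a}_M^*$; (b) pour tout couple $Q,Q' \in \mathcal{P}(M)$ adjacents séparés par une racine $\alpha \in \Sigma_Q \cap (-\Sigma_{Q'})$, et pour tout $\Lambda \in i\mathfrak{a}_M^*$ tel que $\angles{\Lambda, \alpha^\vee}=0$, l'égalité $c_Q(T,\Lambda) = c_{Q'}(T,\Lambda)$ et $d_Q(\lambda_L,\Lambda)=d_{Q'}(\lambda_L,\Lambda)$.

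Le cas de $c_Q(T,\Lambda) = e^{\angles{\Lambda, Y_Q(T)}}$ est immédiat: la régularité est évidente, et la propriété de compatibilité se ramène à vérifier que $Y_Q(T) - Y_{Q'}(T) \in \R \alpha^\vee$ dès que $Q, Q'$ sont adjacents le long de la paroi définie par $\alpha$. C'est la propriété classique de la famille orthogonale positive d'Arthur attachée à la définition $Y_Q(T) = \pi_M(t^{-1}(T-T_0)+T_0)$: les représentants $t$ et $t'$ correspondants dans $W^G_0/W^M_0$ diffèrent par la réflexion associée à $\alpha$, ce qui entraîne directement l'assertion.

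Pour $d_Q$, la régularité sur $i\mathfrak{a}_M^*$ résulte du fait que, $\lambda_L$ étant fixé, les quantités $\lambda = \lambda_L + (s-1)^{-1}\Lambda^L_M$ et $\lambda+\zeta = \lambda + \Lambda_L$ restent dans $i\mathfrak{a}_M^*$ lorsque $\Lambda \in i\mathfrak{a}_M^*$, si bien que les opérateurs $M_{Q|P}(\lambda)^{-1}$ et $M_{Q|P}(s,\lambda+\zeta)$ y sont réguliers d'après les propriétés établies dans \S\ref{sec:prod-Eis}. Pour la compatibilité aux parois, l'ingrédient clé sera l'équation fonctionnelle
$$ M_{Q|P}(1,\mu) = M_{Q|Q'}(1,\mu) M_{Q'|P}(1,\mu), \quad M_{Q|P}(s,\mu) = M_{Q|Q'}(1, s\mu) M_{Q'|P}(s,\mu), $$
qui permet d'écrire
$$ d_Q(\lambda_L,\Lambda) = \Tr\bigl(M_{Q'|P}(\lambda)^{-1} M_{Q|Q'}(\lambda)^{-1} M_{Q|Q'}(s(\lambda+\zeta)) M_{Q'|P}(s,\lambda+\zeta) \mathcal{I}_{\tilde{P}}(\lambda,f)_{\chi,\pi} \bigr). $$

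Il restera à exploiter le fait que l'opérateur d'entrelacement de rang un $M_{Q|Q'}(\mu)$ ne dépend de $\mu$ qu'à travers le scalaire $\angles{\mu,\alpha^\vee}$. On calcule alors, puisque $s$ fixe $\mathfrak{a}_L^*$ et donc $s\zeta = \zeta$:
$$ s(\lambda+\zeta) - \lambda = (s-1)\lambda + \zeta = \Lambda. $$
Ainsi, $\angles{\Lambda,\alpha^\vee} = 0$ entraîne $M_{Q|Q'}(s(\lambda+\zeta)) = M_{Q|Q'}(\lambda)$, les facteurs intermédiaires se simplifient et l'on obtient bien $d_Q(\lambda_L,\Lambda) = d_{Q'}(\lambda_L,\Lambda)$. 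L'étape potentiellement la plus délicate est la vérification soigneuse que les deux arguments des opérateurs d'entrelacement restent dans le domaine de régularité au cours du changement de variable $(\lambda,\zeta) \leftrightarrow (\Lambda,\lambda_L)$, mais cela résulte de l'invertibilité de $(s-1)$ sur $\mathfrak{a}^L_M$ pour $s \in W^L(M)_\text{reg}$.
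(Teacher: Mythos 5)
Votre démonstration est correcte et suit essentiellement la même voie que l'article, lequel se contente de renvoyer à \cite[\S 2]{Ar82-Eis1} en signalant que la famille $c_Q$ est exactement celle d'Arthur et que le cas de $d_Q$ résulte des propriétés de base des opérateurs $M_{Q|P}(\cdots)$; vous explicitez précisément cet argument (équations fonctionnelles, dépendance de l'opérateur de rang un $M_{Q|Q'}(\mu)$ en $\angles{\mu,\alpha^\vee}$ seulement, et l'identité $s(\lambda+\zeta)-\lambda=\Lambda$ via $s\zeta=\zeta$). Les détails que vous remplissez sont exacts, y compris la régularité sur $i\mathfrak{a}_M^*$ et l'inversibilité de $(s-1)$ sur $\mathfrak{a}^L_M$.
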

\begin{proof}
  C'est pareil que \cite[\S 2]{Ar82-Eis1}. La famille $\{c_Q(T,\Lambda)\}_{Q \in \mathcal{P}(M)}$ est exactement celle utilisée par Arthur et le revêtement n'y intervient pas. L'assertion concernant $d_Q(\lambda_L, \Lambda)$ est prouvée en utilisant des propriétés de base des opérateurs d'entrelacement $M_{Q|P}(\cdots)$.
\end{proof}

Maintenant, soient $\lambda, \lambda' \in i\mathfrak{a}_M^*$ comme dans \eqref{eqn:cd-prim}. Puisque nous ne regardons que la limite $\lambda' \to \lambda$, c'est loisible d'écrire $\lambda' = \lambda+\zeta$ avec $\zeta \in i\mathfrak{a}_L^*$. Via la bijection $(\lambda, \zeta) \mapsto (\Lambda, \lambda_L)$ ci-dessus, on a
$$ \Lambda = (s-1)\lambda+\zeta = s\lambda' - \lambda . $$

Supposons maintenant que $P$ est un parabolique standard. Le terme associée à $s$ dans \eqref{eqn:cd-prim} devient
$$ \sum_{Q \in \mathcal{P}(M)} c_Q(T,\Lambda) d_Q(\lambda_L, \Lambda) \theta_Q(\Lambda)^{-1}. $$

D'après la formule de descente \cite[Lemme 4.2.4]{Li10a}, sa valeur en $\lambda'=\lambda$ est égale à
$$ \sum_{S \in \mathcal{F}(M)} c^S_M(T, (s-1)\lambda) d'_S(\lambda_L, (s-1)\lambda). $$

On en déduit que
\begin{multline*}
  \int_{i(\mathfrak{a}^G_M)^*} \Tr(\omega^T_{\chi,\pi}(\tilde{P},\lambda)\mathcal{I}_{\tilde{P}}(\lambda,f)_{\chi,\pi}) B_\pi(\lambda) \dd\lambda \\
  = \sum_{L \in \mathcal{L}(M)} \sum_{s \in W^L(M)_\text{reg}} \; \int_{i(\mathfrak{a}^G_M)^*} \sum_{S \in \mathcal{F}(M)} c^S_M(T, (s-1)\lambda) d'_S(\lambda_L, (s-1)\lambda) B_\pi(\lambda) \dd\lambda \\
  = \sum_{L,s} |\det(s-1|\mathfrak{a}^L_M)|^{-1} \sum_{S \in \mathcal{F}(M)} \;\int_{i(\mathfrak{a}^L_M)^*} \int_{i(\mathfrak{a}^G_L)^*} c^S_M(T,\mu) d'_S(\lambda, \mu) B_\pi((s-1)^{-1}\mu + \lambda) \dd\lambda \dd\mu
\end{multline*}
à l'aide de l'isomorphisme $(s-1): i(\mathfrak{a}^L_M)^* \to i(\mathfrak{a}^L_M)^*$. N'oublions pas que, vu le Théorème \ref{prop:P^T(B)-asymp}, il suffit de considérer le comportement de cette expression lorsque $T \xrightarrow{P_0} \infty$, ce qui est dicté par la $(G,M)$-famille $(c_Q(T,\Lambda))_{Q \in \mathcal{P}(M)}$ qui n'a rien à faire avec les revêtements. Les arguments dans \cite[pp.1305-1308]{Ar82-Eis1} sont toujours applicables car la seule propriété de $d'_S(\lambda,\mu)$ qui importe est sa lissité. Ils entraînent que la différence entre
$$ \int_{i(\mathfrak{a}^G_M)^*} \Tr(\omega^T_{\chi,\pi}(\tilde{P},\lambda)\mathcal{I}_{\tilde{P}}(\lambda,f)_{\chi,\pi}) B_\pi(\lambda) \dd\lambda $$
et
$$ \sum_{L,s} |\det(s-1|\mathfrak{a}^L_M)|^{-1} \int_{i(\mathfrak{a}^G_L)^*} \left( \sum_{S \in \mathcal{F}(L)} c^S_L(T) d'_S(\lambda) \right) B_\pi(\lambda) \dd\lambda $$
tend vers $0$ lorsque $T \xrightarrow{P_0} \infty$, et que $c^S_L(T)$ est un polynôme en $T$ pour tout $S \in \mathcal{F}(L)$. En appliquant encore une fois la formule de descente, on obtient
$$ \sum_{S \in \mathcal{F}(L)} c^S_L(T,\Lambda) d'_S(\lambda,\Lambda) = \sum_{Q_1 \in \mathcal{P}(L)} c_{Q_1}(T,\Lambda) d_{Q_1}(\lambda, \Lambda) \theta_{Q_1}(\Lambda)^{-1}, \quad \lambda,\Lambda \in i\mathfrak{a}_L^* . $$

Définissons des opérateurs
\begin{align*}
  \mathcal{M}_Q(\tilde{P}, \lambda, \Lambda) & := M_{Q|P}(\lambda)^{-1} M_{Q|P}(\lambda+\Lambda), \\
  \mathcal{M}^T_Q(\tilde{P}, \lambda, \Lambda) & := c_Q(T,\Lambda) \mathcal{M}_Q(P, \lambda, \Lambda), \qquad Q \in \mathcal{P}(M)
\end{align*}
où $\lambda, \Lambda \in i\mathfrak{a}_M^*$ sont supposés en position générale. On vérifie à l'aide des propriétés des opérateurs d'entrelacement qu'ils forment des $(G,M)$-familles en $\Lambda$.

Soit $Q_1 \in \mathcal{P}(L)$, on choisit $Q \in \mathcal{P}(M)$ avec $Q \subset Q_1$. Si $\lambda, \Lambda \in i\mathfrak{a}_L^*$, on vérifie que
$$ M_{Q|P}(s, \lambda+\Lambda) = M_{Q|P}(\lambda+\Lambda) M_{P|P}(s, \lambda+\Lambda) = M_{Q|P}(\lambda+\Lambda) M_{P|P}(s,0), $$
d'où
\begin{align*}
  d_{Q_1}(\lambda,\Lambda) &= \Tr(M_{Q|P}(\lambda)^{-1} M_{Q|P}(s,\lambda+\Lambda) \mathcal{I}_{\tilde{P}}(\lambda,f)_{\chi,\pi}) \\
  & = \Tr(M_{Q|P}(\lambda)^{-1} M_{Q|P}(\lambda+\Lambda) M_{P|P}(s,0) \mathcal{I}_{\tilde{P}}(\lambda,f)_{\chi,\pi}).
\end{align*}

Donc $c_{Q_1}(T,\Lambda)d_{Q_1}(\lambda, \Lambda)$ est égal à
$$ \Tr(\mathcal{M}_{Q_1}^T(\tilde{P}, \lambda, \Lambda) M_{P|P}(s,0) \mathcal{I}_{\tilde{P}}(\lambda,f)_{\chi,\pi}). $$

Vu le Théorème \ref{prop:P^T(B)-asymp}, le bilan de ces raisonnements est le suivant.

\begin{theorem}[Cf. {\cite[Theorem 4.1]{Ar82-Eis2}}]\label{prop:formule-explicite}
  $P^T(B)$ est égal à
  \begin{multline}\label{eqn:P^T(B)-fin}
    \sum_{\substack{P \in \mathcal{F}(M_0) \\ P=MU \supset P_0}} \sum_{\pi \in \Pi_\mathrm{unit}(\tilde{M}^1)} \sum_{L \in \mathcal{L}(M)} \sum_{s \in W^L(M)_\text{reg}} |\mathcal{P}(M)|^{-1} |\det(s-1|\mathfrak{a}^L_M)|^{-1} \cdot \\
    \cdot \int_{i(\mathfrak{a}^G_L)^*} \Tr(\mathcal{M}^T_L(\tilde{P},\lambda) M_{P|P}(s,0) \mathcal{I}_{\tilde{P}}(\lambda,f)_{\chi,\pi}) B_\pi(\lambda) \dd\lambda .
  \end{multline}
\end{theorem}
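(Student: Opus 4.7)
Le plan est de dériver cette formule explicite comme conclusion directe des calculs développés avant l'énoncé, en combinant le Théorème \ref{prop:P^T(B)-asymp}, la Proposition \ref{prop:omega-prod-s}, et deux applications successives de la formule de descente pour les $(G,M)$-familles \cite[Lemme 4.2.4]{Li10a}. Aucun argument véritablement nouveau n'est nécessaire; il s'agit essentiellement d'un travail de collage.

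Plus précisément, je commencerais par invoquer le Théorème \ref{prop:P^T(B)-asymp}, qui ramène $P^T(B)$ au comportement asymptotique lorsque $T \xrightarrow{P_0} \infty$ de la somme des intégrales $\int_{i(\mathfrak{a}^G_M)^*} \Tr(\omega^T_{\chi,\pi}(\tilde{P},\lambda)\mathcal{I}_{\tilde{P}}(\lambda,f)_{\chi,\pi}) B_\pi(\lambda) \dd\lambda$. Ensuite, la Proposition \ref{prop:omega-prod-s} réécrit $\omega^T_{\chi,\pi}(\tilde{P},\lambda)$ comme valeur en $\lambda'=\lambda$ d'une double somme indexée par $s \in W(M)$ et $Q \in \mathcal{P}(M)$, et la décomposition $W(M) = \bigsqcup_{L \in \mathcal{L}(M)} W^L(M)_\text{reg}$ permet d'associer à chaque $s$ un unique $L$. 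Pour ce couple $(s, L)$, le changement de variables $(\lambda, \zeta) \mapsto (\Lambda, \lambda_L) = ((s-1)\lambda + \zeta, \lambda_L)$ produit le facteur $|\det(s-1|\mathfrak{a}^L_M)|^{-1}$ comme jacobien.

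Les $(G,M)$-familles $c_Q(T,\Lambda)$ et $d_Q(\lambda_L, \Lambda)$ étant en position, une première application de la formule de descente donne $\sum_{Q \in \mathcal{P}(M)} c_Q d_Q \theta_Q^{-1} = \sum_{S \in \mathcal{F}(M)} c^S_M d'_S$. Le passage à la limite $T \xrightarrow{P_0} \infty$ utilise alors les arguments combinatoires d'Arthur dans \cite[pp.~1305--1308]{Ar82-Eis1}, qui sont de nature purement analytique: la seule propriété de $d'_S$ intervenant est sa lissité, si bien qu'ils s'appliquent sans modification au contexte des revêtements. Ceci permet de remplacer $\sum_{S \in \mathcal{F}(M)} c^S_M d'_S$ par $\sum_{S \in \mathcal{F}(L)} c^S_L d'_S$ à une erreur tendant vers $0$, puis une seconde application de la descente fait apparaître $\sum_{Q_1 \in \mathcal{P}(L)} c_{Q_1} d_{Q_1} \theta_{Q_1}^{-1}$.

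Enfin, il faut identifier le terme résultant à celui impliquant $\mathcal{M}^T_L(\tilde{P},\lambda)$ et $M_{P|P}(s,0)$ dans \eqref{eqn:P^T(B)-fin}. L'identité clé est $M_{Q|P}(s,\lambda+\Lambda) = M_{Q|P}(\lambda+\Lambda) M_{P|P}(s,0)$ pour $\lambda,\Lambda \in i\mathfrak{a}_L^*$ et $Q \subset Q_1$, qui découle directement des équations fonctionnelles des opérateurs d'entrelacement et permet de factoriser $M_{P|P}(s,0)$ dans la trace. L'obstacle principal dans cette démonstration est donc le contrôle rigoureux du passage à la limite en $T$, mais précisément parce qu'il est indépendant de la structure du revêtement et ne met en jeu que la théorie combinatoire des $(G,M)$-familles et les estimations de Schwartz, les raisonnements d'Arthur s'y appliquent tels quels.
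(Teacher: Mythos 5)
Votre démonstration est correcte et suit essentiellement le même chemin que celui du texte : Théorème \ref{prop:P^T(B)-asymp}, réécriture de $\omega^T_{\chi,\pi}$ via la Proposition \ref{prop:omega-prod-s}, décomposition $W(M)=\bigsqcup_L W^L(M)_\text{reg}$ avec le jacobien $|\det(s-1|\mathfrak{a}^L_M)|^{-1}$, double application de la formule de descente encadrant le passage à la limite d'Arthur, puis factorisation de $M_{P|P}(s,0)$ pour $\lambda,\Lambda\in i\mathfrak{a}_L^*$. Rien à redire.
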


\section{Convergence absolue}\label{sec:conv}
L'étape suivante est d'évaluer $\lim_{\epsilon \to 0} P^T(B^\epsilon)$ à l'aide de \eqref{eqn:P^T(B)-fin}. Pour ce faire, il faudra des majorations qui permettront d'appliquer la convergence dominée à \eqref{eqn:P^T(B)-fin}.

\paragraph{Normalisation des opérateurs d'entrelacement}
Soient $M \in \mathcal{L}(M_0)$, $P,Q \in \mathcal{P}(M)$ et $\pi = \bigotimes_v \pi_v \in \Pi_\mathrm{unit}(\tilde{M}^1)$. Ici on regarde $\pi$ comme une représentation de $\tilde{M}$ sur laquelle $A_{M,\infty}$ opère trivialement. Supposons que $\pi$ intervient dans le spectre discret $L^2_\text{disc}(M(F) \backslash \tilde{M}^1)$, c'est-à-dire la somme directe complétée des sous-représentations irréductibles de $L^2(M(F) \backslash \tilde{M}^1)$. En chaque place $v$ de $F$, nous avons défini dans \cite[\S 3]{Li11a} les objets locaux suivants.

\begin{itemize}
  \item Les opérateurs d'entrelacement $J_{\tilde{Q}|\tilde{P}}(\pi_{v,\lambda}): \mathcal{I}_{\tilde{P}}(\pi_{v,\lambda}) \to \mathcal{I}_{\tilde{Q}}(\pi_{v,\lambda})$, méromorphes en $\lambda \in \mathfrak{a}_{M,\C}^*$.
  \item Les facteurs normalisant $r_{\tilde{Q}|\tilde{P}}(\pi_\lambda)$, qui sont des fonctions méromorphes en $\lambda \in \mathfrak{a}_{M,\C}^*$. Lorsque $v \notin V_\text{ram}$ et $\pi_v$ est non ramifié, nous prenons les facteurs normalisants non ramifiés (voir \cite[\S 3.4]{Li11a}).
  \item Les opérateurs d'entrelacement normalisés $R_{\tilde{Q}|\tilde{P}}(\pi_{v,\lambda}) := r_{\tilde{Q}|\tilde{P}}(\pi_\lambda)^{-1} J_{\tilde{Q}|\tilde{P}}(\pi_{v,\lambda})$. Ils sont méromorphes en $\lambda$ et sont des opérateurs unitaires pour $\lambda \in i\mathfrak{a}_M^*$.
\end{itemize}

En fait, dans \cite{Li11a} on ne considère que les représentations spécifiques, c'est-à-dire le cas où $\pi(\noyau) = \noyau\cdot\identity$ pour tout $\noyau \in \bmu_m$. Or on peut toujours se ramener au cas spécifique en passant à un revêtement plus petit. De plus, on a la décomposition
\begin{align}\label{eqn:r-prod}
  r_{\tilde{Q}|\tilde{P}}(\pi_{v,\lambda}) & = \prod_{\alpha \in \Sigma_Q^{\text{red}} \cap \Sigma_{\bar{P}}^{\text{red}}} r_\alpha(\pi_{v,\lambda}), \\
  r_\alpha(\pi_{v,\lambda}) & := r^{\tilde{M}_\alpha}_{\tilde{Q} \cap \tilde{M}_\alpha | \tilde{P} \cap \tilde{M}_\alpha}(\pi_{v,\lambda}),
\end{align}
où $M_\alpha$ est le sous-groupe de Lévi tel que $\Sigma^{M_\alpha, \text{red}}_{Q \cap M_\alpha} = \{\alpha\}$.

Soit $\phi = \bigotimes \phi_v$ un élément dans l'espace sous-jacent de $\mathcal{I}_{\tilde{P}}(\pi_\lambda) := {\bigotimes_v}' \mathcal{I}_{\tilde{P}}(\pi_{v,\lambda})$. Pour presque toute place $v \notin V_\text{ram}$ telle que $\pi_v$ est non ramifiée, rappelons que la commutativité de l'algèbre de Hecke sphérique garantit qu'il n'y a qu'une seule droite de vecteurs sphériques de $\pi_v$, et $\phi_v$ est le vecteur sphérique que nous fixons pour définir le produit tensoriel restreint. Or, d'après la construction des opérateurs $R_{\tilde{Q}|\tilde{P}}(\pi_{v,\lambda})$, on voit que $R_{\tilde{Q}|\tilde{P}}(\pi_{v,\lambda})\phi_v = \phi_v$ pour tout $\lambda \in \mathfrak{a}_{M,\C}^*$ et presque toute place $v$. Donc l'opérateur d'entrelacement normalisé global
$$ R_{Q|P}(\pi_\lambda) := \prod_v R_{\tilde{Q}|\tilde{P}}(\pi_{v,\lambda}) $$
est bien défini: son action sur chaque vecteur lisse est effectivement donnée par un produit fini.

D'autre part, $\mathcal{A}^2(\tilde{P})_\pi$ s'identifie à un sous-espace dense de $\Hom(\pi, L^2_\text{disc}(M(F) \backslash \tilde{M}^1)) \otimes \mathcal{I}_{\tilde{P}}(\lambda)$. Idem pour $Q$ au lieu de $P$. En comparant les formules définissant $M_{Q|P}(\lambda)$ et celles pour les $J_{\tilde{Q}|\tilde{P}}(\pi_{v,\lambda})$, on voit que
$$ \left. \identity \otimes \prod_v J_{\tilde{Q}|\tilde{P}}(\pi_{v,\lambda}) \right|_{\mathcal{A}^2(\tilde{P})_\pi} = M_{Q|P}(\lambda) $$
pour $\Re\lambda$ suffisamment positif. Cf. \cite[II.1.9]{MW94}. Il définit donc une famille d'opérateurs méromorphes en $\lambda$. On en déduit le résultat suivant.

\begin{lemma}
  Le produit infini
  $$ r_{Q|P}(\pi_\lambda) := \prod_v r_{\tilde{Q}|\tilde{P}}(\pi_{v,\lambda}), \quad \lambda \in \mathfrak{a}_{M,\C}^* $$
  est convergent pour $\Re\lambda$ suffisamment positif et définit une fonction méromorphe en $\lambda$.
\end{lemma}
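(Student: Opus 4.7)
Le plan est de déduire la convergence et la méromorphie de $r_{Q|P}(\pi_\lambda)$ en la comparant à l'opérateur global $M_{Q|P}(\lambda)$, qui d'après le paragraphe précédent est méromorphe sur $\mathcal{A}^2(\tilde{P})_\pi$ et coïncide avec $\identity \otimes \prod_v J_{\tilde{Q}|\tilde{P}}(\pi_{v,\lambda})$ lorsque $\Re\lambda$ est suffisamment positif. L'idée est de tester cet opérateur sur un vecteur décomposable convenable et d'isoler, aux places non ramifiées, le scalaire produit par les facteurs normalisants.

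Je choisirais d'abord un vecteur décomposable $\phi = \bigotimes_v \phi_v \in \mathcal{A}^2(\tilde{P})_\pi$, dont la composante locale $\phi_v$ est le vecteur sphérique distingué à toute place $v$ hors d'un ensemble fini $V \supset V_\text{ram}$ contenant les places où $\pi_v$ est ramifiée. Un tel vecteur existe: $\mathcal{A}^2(\tilde{P})_\pi$ s'identifie à un sous-espace dense de $\Hom(\pi, L^2_\text{disc}(M(F) \backslash \tilde{M}^1)) \otimes \mathcal{I}_{\tilde{P}}(\lambda)$, et le second facteur est un produit tensoriel restreint dans lequel presque tous les composants sont des droites sphériques par la commutativité des algèbres de Hecke sphériques.

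Ensuite, j'invoquerais la décomposition $J_{\tilde{Q}|\tilde{P}}(\pi_{v,\lambda}) = r_{\tilde{Q}|\tilde{P}}(\pi_{v,\lambda}) R_{\tilde{Q}|\tilde{P}}(\pi_{v,\lambda})$ et le fait que, par la construction des facteurs normalisants non ramifiés dans \cite[\S 3.4]{Li11a}, on a $R_{\tilde{Q}|\tilde{P}}(\pi_{v,\lambda}) \phi_v = \phi_v$ pour tout $v \notin V$, sous l'identification standard des droites sphériques dans $\mathcal{I}_{\tilde{P}}$ et $\mathcal{I}_{\tilde{Q}}$. Il en résulte que $J_{\tilde{Q}|\tilde{P}}(\pi_{v,\lambda})\phi_v = r_{\tilde{Q}|\tilde{P}}(\pi_{v,\lambda}) \cdot \phi_v$ pour de tels $v$. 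En injectant ceci dans la factorisation eulérienne de $M_{Q|P}(\lambda)\phi$ valable pour $\Re\lambda$ assez positif, on obtient
$$ M_{Q|P}(\lambda)\phi = \Bigl(\prod_{v \notin V} r_{\tilde{Q}|\tilde{P}}(\pi_{v,\lambda})\Bigr) \Bigl(\bigotimes_{v \in V} J_{\tilde{Q}|\tilde{P}}(\pi_{v,\lambda})\phi_v\Bigr) \otimes \Bigl(\bigotimes_{v \notin V} \phi_v\Bigr). $$
Puisque le membre de gauche est donné par une intégrale absolument convergente pour $\Re\lambda$ assez positif, et le tenseur fini figurant à droite est non nul pour $\lambda$ générique, le facteur scalaire $\prod_{v \notin V} r_{\tilde{Q}|\tilde{P}}(\pi_{v,\lambda})$ converge; multipliant par le produit fini sur $V$, on obtient la convergence de $r_{Q|P}(\pi_\lambda)$.

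Pour le prolongement méromorphe, j'isolerais ce scalaire comme quotient de la fonction méromorphe obtenue en projetant $M_{Q|P}(\lambda)\phi$ sur sa composante scalaire le long du vecteur décomposable de référence, et du produit fini sur $v \in V$, méromorphe facteur par facteur. L'obstacle principal à clarifier sera l'identification $R_{\tilde{Q}|\tilde{P}}(\pi_{v,\lambda})\phi_v = \phi_v$ aux places non ramifiées, qui repose sur la normalisation explicite de \cite{Li11a} et l'unicité de la droite sphérique; toutes les autres étapes sont des manipulations formelles à partir des propriétés déjà établies du produit eulérien de $M_{Q|P}(\lambda)$.
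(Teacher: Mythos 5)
Votre démonstration suit essentiellement la même voie que l'article : celui-ci établit dans le paragraphe précédant le lemme que $M_{Q|P}(\lambda)$ coïncide avec $\identity \otimes \prod_v J_{\tilde{Q}|\tilde{P}}(\pi_{v,\lambda})$ sur $\mathcal{A}^2(\tilde{P})_\pi$ et que $R_{\tilde{Q}|\tilde{P}}(\pi_{v,\lambda})\phi_v = \phi_v$ en presque toute place, puis « en déduit » le lemme exactement par l'argument de quotient que vous explicitez (tester sur un vecteur décomposable, isoler le scalaire $\prod_v r_{\tilde{Q}|\tilde{P}}(\pi_{v,\lambda})$ comme rapport de l'opérateur global méromorphe et du produit fini des facteurs locaux). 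Votre rédaction est correcte et en fait plus détaillée que celle du texte.
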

\begin{remark}
  On conjecture, du moins dans le cas des groupes réductifs, que $r_{Q|P}(\pi_\lambda)$ s'exprime en termes des fonctions $L$ automorphes et des facteurs $\varepsilon$. Nous n'aborderons pas ce point de vue dans cet article.
\end{remark}

\paragraph{Une majoration pour les facteurs normalisants}
Fixons $P \in \mathcal{P}(M)$ et définissons les $(G,M)$-familles suivantes en $\Lambda \in \mathfrak{a}_{M,\C}^*$.
\begin{align*}
  \mathcal{R}_Q(\pi, \Lambda, P) & := R_{Q|P}(\pi)^{-1} R_{Q|P}(\pi_\Lambda), \\
  r_Q(\pi, \Lambda, P) & := r_{Q|P}(\pi)^{-1} r_{Q|P}(\pi_\Lambda).
\end{align*}

\begin{theorem}\label{prop:majoration}
  Soit $L \in \mathcal{L}(M)$. Il existe un entier $n_0$ tel que
  $$ \int_{i(\mathfrak{a}^G_L)^*} |r^S_L(\pi_\lambda, P)| (1+\|\lambda\|)^{-n_0} \dd\lambda < +\infty $$
  pour tout $S \in \mathcal{F}(L)$.
\end{theorem}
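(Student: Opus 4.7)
Le plan est de suivre la stratégie d'Arthur: la formule de descente pour les $(G,M)$-familles permet de ramener la majoration à un problème de croissance modérée pour les facteurs normalisants rang un, ensuite décomposés place par place.

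Premièrement, j'appliquerais la formule de descente des $(G,M)$-familles \cite[Lemme 4.2.4]{Li10a} à la $(G,M)$-famille $\{r_Q(\pi, \Lambda, P)\}_{Q \in \mathcal{P}(M)}$. Cela exprime $r^S_L(\pi_\lambda, P)$ comme une combinaison linéaire finie de produits de dérivées partielles en $\Lambda$, évaluées en $\Lambda = 0$, des fonctions scalaires $\Lambda \mapsto r_{Q|P}(\pi_\lambda)^{-1} r_{Q|P}(\pi_{\lambda+\Lambda})$ pour $Q \in \mathcal{P}(M)$. Par la règle de Leibniz, il suffit alors de majorer polynomialement en $\|\lambda\|$ les dérivées logarithmiques de $\lambda \mapsto r_{Q|P}(\pi_\lambda)$ sur $i\mathfrak{a}_M^*$ à tout ordre borné.

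Deuxièmement, la décomposition \eqref{eqn:r-prod} en facteurs rang un et la règle de Leibniz appliquée au logarithme réduisent la question au contrôle polynomial de $\partial^n \log r_\alpha(\pi_\lambda)$ pour chaque racine réduite $\alpha$. Je décomposerais ensuite $r_\alpha(\pi_\lambda) = \prod_v r_\alpha(\pi_{v,\lambda})$. Pour presque toute place $v$, $\pi_v$ est non ramifiée et $r_\alpha(\pi_{v,\lambda})$ est donné par la formule explicite des facteurs normalisants non ramifiés de \cite[\S 3.4]{Li11a}; le produit sur les $v \notin V_\text{ram}$ admet ainsi une expression méromorphe concrète, analogue à un quotient de fonctions $L$ partielles, dont la dérivée logarithmique est à croissance modérée sur les axes verticaux grâce aux majorations classiques (type Jacquet-Shalika aux places finies, formule de Stirling aux places archimédiennes). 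Aux places de $V_\text{ram}$, en nombre fini, les propriétés analytiques locales établies dans \cite{Li11a} fournissent également un contrôle polynomial.

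L'obstacle principal sera d'établir rigoureusement la croissance modérée du produit global: comme souligné dans la remarque précédant le théorème, on ne dispose pas d'une interprétation complète des $r_{Q|P}(\pi_\lambda)$ en termes de fonctions $L$ automorphes dans le cas des revêtements, et il faudra donc travailler directement avec la forme explicite des facteurs non ramifiés établie dans \cite{Li11a}, en s'assurant que l'assemblage local-global préserve la croissance polynomiale. Une fois ces majorations en main, on choisit $n_0$ suffisamment grand pour que $(1+\|\lambda\|)^{-n_0}$ domine la croissance de $r^S_L(\pi_\lambda, P)$, assurant l'absolue convergence de l'intégrale sur $i(\mathfrak{a}^G_L)^*$.
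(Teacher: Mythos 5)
Votre première étape coïncide avec la réduction effectuée dans le texte : la décomposition \eqref{eqn:r-prod} et le fait que $(r_Q(\pi,\Lambda,P))_Q$ est une $(G,M)$-famille « de type produit sur les racines » ramènent bien le Théorème \ref{prop:majoration} à une majoration de rang un, c'est-à-dire au Lemme \ref{prop:majoration-alpha} portant sur $\int |r_\alpha(\pi_\lambda)^{-1}\dot{r}_\alpha(\pi_\lambda)|(1+\|\lambda\|)^{-n_0}\dd\lambda$. En revanche, la manière dont vous proposez d'établir cette majoration de rang un — décomposition place par place, formule explicite des facteurs non ramifiés, puis majorations « classiques » (Jacquet--Shalika, Stirling) de la dérivée logarithmique du produit global sur les axes verticaux — constitue une lacune réelle. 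Le produit sur les places non ramifiées est un quotient de fonctions $L$ partielles dont on ne connaît pas, en général (et encore moins pour les revêtements), le prolongement méromorphe avec croissance modérée de la dérivée logarithmique dans la bande critique ; les majorations de type Jacquet--Shalika ne s'appliquent qu'à des fonctions $L$ bien particulières de $\GL_n$. La remarque qui précède le théorème dans le texte souligne d'ailleurs que l'identification même de $r_{Q|P}(\pi_\lambda)$ avec des fonctions $L$ automorphes est conjecturale. Vous identifiez cet obstacle vous-même, mais c'est précisément le cœur de la démonstration, et la voie directe que vous esquissez ne permet pas de le franchir.

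La démonstration du texte (suivant Arthur, Lemma 8.4 de \cite{Ar82-Eis2}) contourne cet obstacle par un argument global indirect : on part de la majoration de la Proposition \ref{prop:majoration-1} pour $(\Omega^T_{\chi,\pi}(\tilde{P},\lambda)\phi|\phi)$, on utilise le développement du Théorème \ref{prop:prod-Eis-1} et un opérateur aux différences $\Delta_T(\lambda)$ pour tuer les exposants non nuls et obtenir la majoration \eqref{eqn:ma-omega} portant sur $(\omega^T_{\chi,\pi}(\tilde{P},\lambda)\phi|\phi)$. Dans le cas d'un Lévi maximal, la Proposition \ref{prop:omega-prod-s} décompose cette quantité en les termes \eqref{eqn:ma-1}, \eqref{eqn:ma-2}, \eqref{eqn:ma-3}, \eqref{eqn:ma-4} ; tous sauf \eqref{eqn:ma-2} se majorent directement (propriété (\textbf{R8}) des opérateurs normalisés locaux pour \eqref{eqn:ma-1}, unitarité de $M_{P|P}(s_\alpha,z\varpi)$ pour \eqref{eqn:ma-4}, etc.), et la majoration cherchée pour $r_\alpha(\pi_\lambda)^{-1}\dot{r}_\alpha(\pi_\lambda)$ s'obtient \emph{par soustraction}. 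Autrement dit, la croissance modérée du facteur normalisant global n'est pas démontrée par une analyse des fonctions $L$ partielles, mais déduite a posteriori du côté spectral de la formule des traces grossière ; c'est cette idée qui manque à votre plan.
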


C'est le résultat technique principal dans cette section. Effectuons d'abord une réduction. Considérons une fonction de la forme $\lambda \mapsto c(\pi_\lambda)$ où $\lambda \in \R$, on note $\dot{c}(\pi_\lambda) := \left.\frac{\dd}{\dd\mu}\right|_{\mu = 0} c(\pi_{\lambda+\mu})$ pour ne pas le confondre avec le formalisme de $(G,M)$-familles.

\begin{lemma}\label{prop:majoration-alpha}
  Il existe un entier $n_0$ tel que pour tout $\alpha \in \Sigma_P^\text{red}$, on a
  $$ \int_{i(\mathfrak{a}^{M_\alpha}_M)^*} |r_\alpha(\pi_\lambda)^{-1} \dot{r}_\alpha(\pi_\lambda)| (1+\|\lambda\|)^{-n_0} \dd \lambda < +\infty $$
\end{lemma}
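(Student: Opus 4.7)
L'approche suit la méthode d'Arthur dans \cite[\S 7]{Ar82-Eis2}, adaptée aux revêtements grâce à la construction locale des facteurs normalisants dans \cite{Li11a}. Comme $\dim(\mathfrak{a}^{M_\alpha}_M) = 1$, on identifie $i(\mathfrak{a}^{M_\alpha}_M)^*$ à $i\R$ via $\alpha^\vee$, et on décompose globalement (convergence absolue dans un demi-plan convenable)
$$ r_\alpha(\pi_\lambda) = \prod_v r_\alpha(\pi_{v,\lambda}), $$
d'où formellement
$$ r_\alpha(\pi_\lambda)^{-1}\dot r_\alpha(\pi_\lambda) = \sum_v r_\alpha(\pi_{v,\lambda})^{-1}\dot r_\alpha(\pi_{v,\lambda}). $$
Le plan est de prolonger cette expression méromorphiquement à $i\R$ puis d'en établir la croissance polynomiale.

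Les places $v \in V_\text{ram}$, étant en nombre fini, se traitent directement à partir des formes explicites données par \cite[\S 3]{Li11a}: aux places archimédiennes, le facteur local est un quotient de fonctions Gamma (éventuellement tordues pour tenir compte du revêtement), dont la dérivée logarithmique est en $O(\log(1+\|\lambda\|))$ par Stirling; aux places finies ramifiées, c'est une fonction rationnelle en $q_v^{-\angles{\lambda,\alpha^\vee}}$, uniformément bornée sur $i\R$ hors d'un nombre fini de pôles.

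Pour les places $v \notin V_\text{ram}$, les facteurs normalisants non ramifiés de \cite[\S 3.4]{Li11a} sont des quotients de facteurs $L$ locaux, dont le produit infini s'identifie donc à (un quotient de) fonctions $L$ partielles. La méromorphie d'ordre fini de ce produit global résulte d'une comparaison avec l'opérateur d'entrelacement global associé à $\tilde M_\alpha$, méromorphe sur tout $\C$ d'après \cite{MW94}, modulo les facteurs locaux aux places $v \in V_\text{ram}$ déjà traités. Un argument classique à la Phragmén--Lindelöf entraîne alors la croissance polynomiale, en dehors de ses singularités, de sa dérivée logarithmique sur les droites verticales.

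La principale difficulté est l'intégrabilité près des éventuelles singularités de $r_\alpha^{-1}\dot r_\alpha$ sur $i\R$. De telles singularités correspondraient à des zéros ou pôles de $r_\alpha(\pi_\lambda)$ sur l'axe imaginaire, ce qui produirait des pôles simples non intégrables en valeur absolue. Pour les exclure, on invoque l'unitarité des opérateurs normalisés $R_{\tilde Q \cap \tilde M_\alpha | \tilde P \cap \tilde M_\alpha}(\pi_{v,\lambda})$ sur $i\mathfrak{a}_M^*$ établie dans \cite{Li11a}, combinée avec la régularité de l'opérateur non normalisé $J_{\tilde Q | \tilde P}(\pi_{v,\lambda})$ pour $\pi_v$ unitaire, qui assurent que les modules locaux $|r_\alpha(\pi_{v,\lambda})|$ sont strictement positifs sur $i\R$. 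Globalement, $r_\alpha(\pi_\lambda)$ n'a donc ni zéros ni pôles sur $i\R$, et $r_\alpha^{-1}\dot r_\alpha$ y est lisse; conjugué à la croissance polynomiale obtenue ci-dessus, cela donne la convergence de l'intégrale pour $n_0$ suffisamment grand.
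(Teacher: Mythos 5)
Votre démonstration suit une route entièrement différente de celle du texte — analyse directe du produit eulérien $r_\alpha(\pi_\lambda)=\prod_v r_\alpha(\pi_{v,\lambda})$ et de sa dérivée logarithmique — et elle comporte deux lacunes qui la rendent irréparable en l'état. Premièrement, la non-annulation de $r_\alpha(\pi_\lambda)$ sur $i\R$ ne se déduit pas de la stricte positivité des modules locaux $|r_\alpha(\pi_{v,\lambda})|$ : le produit infini ne converge pas sur l'axe imaginaire (il n'y est défini que par prolongement méromorphe), et un produit eulérien à facteurs partout non nuls peut très bien s'annuler après prolongement — c'est exactement ce qui arrive à $\zeta(s)$ sur la droite critique. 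Deuxièmement, et c'est le point le plus grave, la méromorphie d'ordre fini ne donne par Phragmén--Lindelöf aucune majoration polynomiale de la dérivée logarithmique sur une droite verticale : $\dot r_\alpha/r_\alpha$ a un pôle en chaque zéro de $r_\alpha$, et contrôler sa taille, même en moyenne, au voisinage de $i\R$ exige une région sans zéro ou une estimée de densité des zéros près de cette droite. Pour les fonctions $L$ partielles attachées aux revêtements, rien de tel n'est disponible ; la Remarque qui suit la définition de $r_{Q|P}(\pi_\lambda)$ signale d'ailleurs que l'identification même avec des fonctions $L$ automorphes reste conjecturale.

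La preuve du texte contourne précisément cette difficulté par un argument global à la Arthur : la majoration a priori de la Proposition \ref{prop:majoration-1} borne $\int|(\Omega^T_{\chi,\pi}(\tilde P,\lambda)\phi|\phi)|(1+\|\lambda\|)^{-n_0}\dd\lambda$ ; un opérateur aux différences $\Delta_T(\lambda)$, construit à partir des exposants du Théorème \ref{prop:prod-Eis-1}, permet de remplacer $\Omega^T$ par $\omega^T$ ; enfin le développement explicite de $(\omega^T_{\chi,\pi}(\tilde P,\lambda)\phi|\phi)$ en quatre termes, dont trois se majorent directement (propriété (\textbf{R8}) de \cite{Li11a} pour les opérateurs normalisés, unitarité de $M_{P|P}(s_\alpha,z\varpi)$, terme linéaire en $T$), force par soustraction la majoration du terme restant $-r_\alpha(\pi_{z\varpi})^{-1}\dot r_\alpha(\pi_{z\varpi})(\phi|\phi)$. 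Aucune propriété analytique fine des facteurs normalisants globaux n'est requise en entrée ; c'est au contraire le lemme lui-même qui fournit la seule information dont on dispose sur eux. Je vous invite à reprendre la démonstration selon ce schéma.
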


\begin{proof}[Démonstration du Théorème \ref{prop:majoration} en supposant le Lemme \ref{prop:majoration-alpha}]
  On observe d'abord que $(r_Q(\pi,\Lambda, P))_Q$ est de la sorte des $(G,M)$-familles considérées dans \cite[\S 7]{Ar82-Eis2} d'après \eqref{eqn:r-prod}. Écrivons $S=L_S U_S$. Alors
  $$ r^S_L(\pi_\lambda, P) = \sum_F \mes(\mathfrak{a}^{L_S}_L / \Z F_L^\vee) \prod_{\alpha \in F} r_\alpha(\pi_\lambda)^{-1} \dot{r}_\alpha(\pi_\lambda) $$
  où $F$ parcourt les sous-ensembles de $\Sigma_M^{L_S, \text{red}}$ tels que $F_L^\vee := F^\vee|_{\mathfrak{a}_L}$ est une base de $\mathfrak{a}^{L_S}_L$. Ainsi, on décompose l'intégrale en question en une somme sur les $F$. Par abus de notation, on note momentanément $\{\varpi_\alpha\}_{\alpha \in F}$ la base duale de $F^\vee|_{\mathfrak{a}_L}$. Pour chaque terme associé à $F$, on utilise la décomposition
  $$ (\mathfrak{a}^G_L)^* = \bigoplus_{\alpha \in F} \R \varpi_\alpha \oplus (\mathfrak{a}^G_{L_S})^* $$
  et le Lemme \ref{prop:majoration-alpha} pour conclure.
\end{proof}

\begin{proof}[Démonstration du Lemme \ref{prop:majoration-alpha}]
  C'est pareil que \cite[Lemma 8.4]{Ar82-Eis2}. Donnons-en une esquisse. Pour simplifier, on peut supposer que $M$ est un Lévi propre maximal standard de $G$. Écrivons $\mathcal{P}(M) = \{P,\bar{P}\}$ où $P$ est standard, $\Sigma_P^\text{red} = \{\alpha\}$. Posons $\varpi := \varpi_\alpha$. Prenons $\chi \in \mathfrak{X}^{\tilde{G}}$ tel que $\mathcal{A}^2(\tilde{P})_{\pi,\chi} \neq \{0\}$.

  Le point de départ est la Proposition \ref{prop:majoration-1}: il existe des entiers $n_0, d_0$ tels que pour tout vecteur $\phi \in \mathcal{A}^2(\tilde{P})_{\chi,\pi}$, il existe une constante $c_\phi$ telle que
  $$ \int_{i(\mathfrak{a}^G_M)^*} |(\Omega^T_{\chi,\pi}(\tilde{P},\lambda)\phi|\phi)| (1+\|\lambda\|)^{-n_0} \dd\lambda \leq c_\phi (1+\|T\|)^{d_0}. $$

  Le Théorème \ref{prop:prod-Eis-1} donne un développement de la forme
  $$ (\Omega^T_{\chi,\pi}(\tilde{P},\lambda)\phi|\phi) = \sum_{X \in \mathcal{E}\text{xp}} q^{T,\tilde{G}}_X(\lambda,\lambda,\phi,\phi) e^{\angles{X,T}}. $$
  Fixons $\phi$ et introduisons un ensemble fini $\mathcal{L}_X \subset \Hom_\R(i\mathfrak{a}_M^*, i\mathfrak{a}_0^*)$ pour tout $X \in \mathcal{E}\text{xp}$, tel qu'il existe des fonctions $\sigma^T_L(\lambda)$, polynomiales en $T$ et analytiques pour $\lambda \in i\mathfrak{a}_M^*$, avec
  $$ q_X^{T,\tilde{G}}(\lambda,\lambda,\phi,\phi) e^{\angles{X,T}} = \sum_{L \in \mathcal{L}_X} \sigma^T_L(\lambda) e^{\angles{L(\lambda)+X,T}}. $$

  Donc il existe un entier $d \gg 0$ tel que, si l'on prend $\xi \in \mathfrak{a}_0$ avec $\angles{X,\xi} < 0$ pour tout $X \in \mathcal{E}\text{xp}$, $X \neq 0$, et pose
  \begin{align*}
    \delta_T & := \text{l'opérateur } (\delta_T \psi)(T') = \psi(T'+\xi) \text{ pour toute $\psi: \mathfrak{a}_0 \to \C$}, \\
    \Delta_T(\lambda) & := \prod_{\substack{X \in \mathcal{E}\text{xp} \setminus \{0\} \\ L \in \mathcal{L}_X}} \left( \delta_T - e^{\angles{L(\lambda)+X, \xi}}\identity \right)^d ,
  \end{align*}
  alors
  $$ \Delta_T(\lambda) (\Omega^T_{\chi,\pi}(\tilde{P},\lambda)\phi|\phi) = \Delta_T(\lambda) (\omega^T_{\chi,\pi}(\tilde{P},\lambda)\phi|\phi). $$

  Puisque $|e^{\angles{L(\lambda)+X,\xi}}| \leq 1$, on en déduit une majoration
  \begin{equation}\label{eqn:ma-omega}
    \int_{i(\mathfrak{a}^G_M)^*} |\Delta_T(\lambda) (\omega^T_{\chi,\pi}(\tilde{P},\lambda)\phi|\phi)| (1+\|\lambda\|)^{-n_0} \dd\lambda \leq c'_\phi (1+\|T\|)^{d_0}
  \end{equation}
  pour une autre constante $c'_\phi$. Dans ce qui suit, nous fixons le vecteur non nul $\phi \in \mathcal{A}^2(\tilde{P})_{\chi,\pi}$.

  Écrivons $\lambda = z\varpi$ où $z \in i\R$. Appliquons la formule fournie par la Proposition \ref{prop:omega-prod-s}. On voit que $(\omega^T_{\chi,\pi}(\tilde{P},\lambda)\phi|\phi))$ est la somme d'au plus deux termes. Il y en a toujours un terme correspondant à $s=1 \in W(M)$, qui est
  $$ \lim_{\lambda' \to \lambda} \sum_{Q \in \{P,\bar{P}\}} (M_{Q|P}(\lambda)^{-1} M_{Q|P}(\lambda')\phi|\phi) \frac{e^{\angles{\lambda'-\lambda,Y_Q(T)}}}{\theta_Q(\lambda'-\lambda)}; $$
  puisque $\dim\mathfrak{a}^G_M = 1$, d'après \cite[\S 7]{Ar82-Eis2} cela est la somme des trois termes
  \begin{align}
    \label{eqn:ma-1} -(R_{\bar{P}|P}(\pi_{z\varpi})^{-1} \dot{R}_{\bar{P}|P}(\pi_{z\varpi})\phi|\phi) \\
    \label{eqn:ma-2} - r_\alpha(\pi_{z\varpi})^{-1} \dot{r}_\alpha(\pi_{z\varpi}) (\phi|\phi) \\
    \label{eqn:ma-3} + 2 \angles{\varpi, T-T_0} (\phi|\phi).
  \end{align}

  L'autre terme éventuel correspond à $s=s_\alpha \in W(M)$, la symétrie orthogonale associée à $\alpha$, si elle existe. De la même façon, c'est égal à
  \begin{equation}\label{eqn:ma-4}
    \mes(\mathfrak{a}^G_M/\Z\alpha^\vee) \left( \frac{(M_{P|P}(s_\alpha, z\varpi)^{-1}\phi|\phi)e^{2z\angles{\varpi, T}}}{2z} + \frac{(M_{P|P}(s_\alpha,z\varpi)\phi|\phi)e^{-2z\angles{\varpi,T}}}{-2z} \right)
  \end{equation}
  lorsque $z \neq 0$.

  Montrons que le terme \eqref{eqn:ma-1} satisfait à une majoration de la forme
  \begin{equation}\label{eqn:ma-cherchee}
    \int_{i\R} |\Delta_T(z\varpi)(\cdots)|(1+|z|)^{-n_0} \dd z \leq c''_\phi (1+\|T\|)^{d_0}
  \end{equation}
  pour un entier $n_0$ suffisamment grand et une constante $c''_\phi$. Comme ce que nous avons constaté dans la définition de $R_{\bar{P}|P}(\pi_{z\varpi})$, c'est essentiellement un problème local. La propriété (\textbf{R8}) dans \cite[\S 3.1]{Li11a} entraîne que cela est vrai sans $\Delta_T(z\varpi)$. Afin d'établir \eqref{eqn:ma-cherchee}, il suffit de noter que l'opérateur $\Delta_T(z\varpi)$ a pour effet d'introduire le facteur uniformément borné en $z$:
  $$ \prod_{\substack{X \in \mathcal{E}\text{xp} \setminus \{0\} \\ L \in \mathcal{L}_X}} \left( 1-e^{\angles{L(z\varpi)+X,\xi}} \right). $$

  Montrons ensuite que \eqref{eqn:ma-3} satisfait aussi à \eqref{eqn:ma-cherchee}. Il suffit de constater que l'effet $\Delta_T(\lambda)$ est de le multiplier par
  $$ \prod_{\substack{X \in \mathcal{E}\text{xp} \setminus \{0\} \\ L \in \mathcal{L}_X}} \left( 2\angles{\varpi,\xi} - e^{\angles{L(z\varpi)+X,\xi}} \right) $$
  qui est toujours uniformément borné en $z$.

  Supposons que $s_\alpha \in W(M)$ existe et considérons \eqref{eqn:ma-4}. Il est borné pour $z \in i\R$ éloigné de $0$ car $M_{P|P}(s_\alpha, z\varpi)$ est un opérateur unitaire. D'autre part, la somme des deux termes dans \eqref{eqn:ma-4} est régulière en $z=0$, donc est bornée pour $z$ proche de $0$. L'effet de $\Delta_T(z\varpi)$ sur \eqref{eqn:ma-4} est d'introduire le facteur
  $$ \prod_{\substack{X \in \mathcal{E}\text{xp} \setminus \{0\} \\ L \in \mathcal{L}_X}} \left( e^{\pm 2z \angles{\varpi,\xi}}-e^{\angles{L(z\varpi)+X,\xi}} \right) $$
  pour les termes avec $\pm 2z$, respectivement. Ceci est lisse en $z$ et uniformément borné. D'où la majoration \eqref{eqn:ma-cherchee}.

  La majoration \eqref{eqn:ma-omega} et les majorations de la forme \eqref{eqn:ma-cherchee} satisfaites par \eqref{eqn:ma-1}, \eqref{eqn:ma-3}, \eqref{eqn:ma-4} entraînent qu'il existe une constante $c''$ et un entier $n_0$ tels que
  $$ \int_{i\R} |\Delta_T(z\varpi) r_\alpha(\pi_{z\varpi})^{-1} \dot{r}_\alpha(\pi_{z\varpi})| (1+|z|)^{-n_0} \dd z \leq  c'' (1+\|T\|)^{d_0}. $$

  Or $r_\alpha(\pi_{z\varpi})^{-1} \dot{r}_\alpha(\pi_{z\varpi})$ étant indépendant de $T$, l'effet de $\Delta_T(z\varpi)$ est d'introduire le facteur $\prod_{X,L} (1-e^{\angles{L(z\varpi)+X,\xi}})$, qui est éloigné de $0$. On en déduit la même majoration sans $\Delta_T(z\varpi)$ quitte à agrandir $c''$, ce qu'il faut démontrer.
\end{proof}
\begin{remark}
  Il serait plus satisfaisant d'établir une majoration uniforme en $\pi$, comme ce qu'est fait dans \cite[Theorem 5.3]{Mu02}. Une telle amélioration sera utile pour des problèmes de convergence absolue du côté spectral, cf. \cite{FLM11}.
\end{remark}

\paragraph{Le développement spectral fin}
Fixons maintenant des données suivantes comme au \S\ref{sec:formule-explicite}.
\begin{itemize}
  \item $f \in \mathcal{H}(\tilde{G}^1)$,
  \item $\chi \in \mathfrak{X}^{\tilde{G}}$,
  \item $M \in \mathcal{L}(M_0)$, $P \in \mathcal{P}(M)$,
  \item $L \in \mathcal{L}(M)$,
  \item $s \in W^L(M)_\text{reg}$,
\end{itemize}

Soit $\pi \in \Pi_\mathrm{unit}(\tilde{M}^1)$. Nous avons déjà défini la $(G,M)$-famille $\mathcal{R}_Q(\pi_\lambda, P)$ (où $Q \in \mathcal{P}(M)$), d'où les opérateurs $\mathcal{R}'_S(\pi_\lambda, P)$ pour chaque $S \in \mathcal{F}(L)$. Tout d'abord, il convient d'introduire la norme de traces $\|\cdot\|_1$ pour les opérateurs.

\begin{definition}\label{def:trace}
  Soient $(H, (\cdot, \cdot))$ un espace d'Hilbert et $A: H \to H$ un opérateur borné. On note
  $$ |A| := (A^* A)^{\frac{1}{2}} $$
  l'opérateur obtenu par le calcul fonctionnel. Soit $\mathcal{B}$ une base orthonormée de $H$, on définit la norme de traces
  $$ \|A\|_1 := \sum_{v \in \mathcal{B}} (|A|v , v) \quad \in \R \cup \{+\infty\}. $$
  C'est connu que $\|A\|_1$ ne dépend pas de $\mathcal{B}$. Un opérateur borné $A: H \to H$ est dit à trace si $\|A\|_1 < +\infty$; dans ce cas-là on peut bien définir sa trace
  $$ \Tr A := \sum_{v \in \mathcal{B}} (Av,v) $$
  qui est indépendante de la base orthonormée $\mathcal{B}$.
\end{definition}

Ces notions sont standards en la théorie des opérateurs; voir par exemple \cite[\S 18]{Co00}. Ci-dessous sont les propriétés de $\|\cdot\|_1$ qui nous concernent.
\begin{enumerate}
  \item Les opérateurs à trace forment un idéal de l'algèbre des opérateurs bornés, pour lequel $\| \cdot \|_1$ est une norme.
  \item Si $A$ est à trace, alors $|\Tr A| \leq \|A\|_1$.
  \item Si $A$ est à trace et $B$ est borné, alors $\Tr(AB)=\Tr(BA)$.
  \item Soit $U: H \to H$ un opérateur unitaire, alors $\|UA\|_1 = \|AU\|_1 = \|A\|_1$.
\end{enumerate}
Les opérateurs que nous considérerons sont tous de rang fini, donc ces notions appartiennent effectivement à l'algèbre linéaire élémentaire.

\begin{lemma}\label{prop:majoration-RI}
  Pour tout entier $n_0 \geq 0$ et toute $\pi \in \Pi_\mathrm{unit}(\tilde{M}^1)$, il existe une constante $c_{\pi}(n_0)$ telle que pour tous $\lambda \in i(\mathfrak{a}^G_L)^*$, on a
  $$ \| \mathcal{R}'_S(\pi_\lambda,P) \mathcal{I}_{\tilde{P}}(\lambda,f)_{\chi,\pi} \|_1 \leq c_\pi(n_0) (1+\|\lambda\|)^{-n_0}. $$
\end{lemma}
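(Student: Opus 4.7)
Notre plan est d'exploiter la $\tilde{K}$-finitude de $f$ pour se ramener à un sous-espace de dimension finie, puis de majorer la norme de traces par un produit d'une majoration polynomiale de $\mathcal{R}'_S(\pi_\lambda, P)$ et d'une décroissance rapide de $\mathcal{I}_{\tilde{P}}(\lambda, f)_{\chi,\pi}$, cette dernière étant obtenue par intégration par parties au moyen d'un élément central de l'algèbre enveloppante.

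Première étape: soit $\Gamma \subset \Pi_\mathrm{unit}(\tilde{K})$ un sous-ensemble fini de $\tilde{K}$-types tel que $f \in \mathcal{H}(\tilde{G}^1)_\Gamma$. Alors $\mathcal{I}_{\tilde{P}}(\lambda, f)_{\chi,\pi}$ se factorise à travers le sous-espace $V := \mathcal{A}^2(\tilde{P})_{\chi,\pi,\Gamma}$, de dimension finie d'après la Proposition \ref{prop:finitude-pi}. De plus, $\mathcal{R}'_S(\pi_\lambda, P)$ commute à l'action de $\tilde{K}$ et préserve donc $V$. La composition est de rang fini, et
$$ \|\mathcal{R}'_S(\pi_\lambda, P)\mathcal{I}_{\tilde{P}}(\lambda, f)_{\chi,\pi}\|_1 \leq \dim(V) \cdot \|\mathcal{R}'_S(\pi_\lambda, P)|_V\| \cdot \|\mathcal{I}_{\tilde{P}}(\lambda, f)_{\chi,\pi}|_V\|. $$

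Deuxième étape: la formule de descente pour les $(G,M)$-familles exprime $\mathcal{R}'_S(\pi_\lambda, P)|_V$ comme une combinaison linéaire finie de produits de dérivées directionnelles des $R_{Q|P}(\pi_\lambda)|_V$. Ces derniers forment une famille analytique d'opérateurs unitaires sur l'espace de dimension finie $V$, indexée par $\lambda \in i\mathfrak{a}_M^*$. La factorisation \eqref{eqn:r-prod} ramène au cas des Lévi propres maximaux, et les propriétés analytiques des opérateurs d'entrelacement normalisés établies dans \cite[\S 3]{Li11a} entraînent que leurs dérivées sont à croissance polynomiale en $\|\lambda\|$. Il existe donc un entier $D \geq 0$ et une constante $C'_\pi > 0$ tels que $\|\mathcal{R}'_S(\pi_\lambda, P)|_V\| \leq C'_\pi (1+\|\lambda\|)^D$ pour tout $\lambda \in i(\mathfrak{a}^G_L)^*$.

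Troisième étape: choisissons $Z$ dans le centre de l'algèbre enveloppante de $\mathfrak{g}_\infty$, par exemple l'élément de Casimir. Son action sur $\mathcal{I}_{\tilde{P}}(\lambda)_{\chi,\pi}$ est par un scalaire $\zeta_\pi(\lambda)$, polynôme en $\lambda$ de degré au moins $2$ pour ce choix (de la forme $\|\lambda\|^2 + c_\pi$). L'identité standard $\mathcal{I}_{\tilde{P}}(\lambda, Z^k f) = \zeta_\pi(\lambda)^k \mathcal{I}_{\tilde{P}}(\lambda, f)$ combinée à la majoration triviale $\|\mathcal{I}_{\tilde{P}}(\lambda, g)\| \leq \|g\|_{L^1(\tilde{G}^1)}$, valable pour tout $g \in \mathcal{H}(\tilde{G}^1)$ et tout $\lambda \in i\mathfrak{a}_M^*$ puisque $\mathcal{I}_{\tilde{P}}(\lambda)$ est unitaire, donnent
$$ \|\mathcal{I}_{\tilde{P}}(\lambda, f)_{\chi,\pi}|_V\| \leq \|Z^k f\|_{L^1} \cdot |\zeta_\pi(\lambda)|^{-k} \leq C''_{\pi,k,f} (1+\|\lambda\|)^{-2k} $$
pour tout $k \geq 0$, où $Z^k f \in \mathcal{H}(\tilde{G}^1)_\Gamma$. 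En prenant $k$ assez grand pour que $2k \geq D + n_0$, la combinaison des trois étapes fournit la majoration cherchée.

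L'obstacle principal se trouve dans l'étape deux, où il faut établir la croissance polynomiale des dérivées des opérateurs d'entrelacement normalisés restreints à un sous-espace de $\tilde{K}$-types fixé; cela repose sur les propriétés analytiques détaillées des facteurs normalisants $r_\alpha$ et sur les majorations correspondantes des opérateurs normalisés archimédiens établies dans \cite{Li11a}.
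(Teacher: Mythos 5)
Your proof is correct and follows essentially the same route as the paper: reduce to a finite-dimensional space via the $\tilde{K}$-finiteness of $f$, use the moderate growth of the normalized intertwining operators and their derivatives from \cite[\S 3.1]{Li11a} to control $\mathcal{R}'_S(\pi_\lambda,P)$, and beat this with the rapid decay of $\mathcal{I}_{\tilde{P}}(\lambda,f)_{\chi,\pi}$. The only difference is that where the paper simply cites the easy half of Arthur's Paley--Wiener theorem for the rapid decay at the archimedean places, you unpack that citation into the standard argument with a central element $Z$ and the infinitesimal character $\nu_\pi+\lambda$ (just note that near the possible zeros of $\zeta_\pi(\lambda)$ one falls back on the trivial bound $\|\mathcal{I}_{\tilde{P}}(\lambda,f)\|\leq\|f\|_{L^1}$, which only affects a compact set of $\lambda$).
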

\begin{proof}
  Vu la $\tilde{K}$-finitude de $f$, la trace et la norme $\|\cdot\|_1$ se calculent dans un espace de dimension finie. D'après les définitions des opérateurs $\mathcal{R}_Q(\pi_\lambda, P)$ et $\mathcal{I}_{\tilde{P}}(\lambda,f)_{\chi,\pi}$, on se ramène à une situation locale. Les places non archimédiennes ne posent aucune difficulté: elles donnent des facteurs uniformément bornés en $\lambda$. En les places archimédiennes, on conclut par les propriétés suivantes.
  \begin{enumerate}
    \item Les coefficients $\tilde{K}_\infty$-finis des opérateurs d'entrelacement normalisés, ainsi que leurs dérivés, sont à croissance modérée en $\lambda \in i\mathfrak{a}_M^*$ (voir \cite[\S 3.1]{Li11a}).
    \item Les coefficients de l'opérateur $\mathcal{I}_{\tilde{P}}(\lambda,f)_{\chi,\pi}$ en les places archimédiennes sont à décroissance rapide en $\lambda \in i\mathfrak{a}_M^*$. En effet, ceci est la partie facile du théorème de Paley-Wiener d'Arthur \cite{Ar83} qui vaut pour les revêtements.
  \end{enumerate}
\end{proof}

\begin{lemma}\label{prop:conv-abs}
  L'intégrale double
  $$ \sum_{\pi \in \Pi_\mathrm{unit}(\tilde{M}^1)}\; \int_{i(\mathfrak{a}^G_L)^*} \Tr(\mathcal{M}_L(\tilde{P},\lambda) M_{P|P}(s,0) \mathcal{I}_{\tilde{P}}(\lambda,f)_{\chi,\pi}) \dd\lambda $$
  converge absolument.
\end{lemma}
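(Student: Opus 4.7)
L'idée est de combiner la factorisation $M_{Q|P}(\pi_\lambda) = r_{Q|P}(\pi_\lambda) R_{Q|P}(\pi_\lambda)$, qui se traduit en l'identité des $(G,M)$-familles
$$ \mathcal{M}_Q(\tilde{P},\lambda,\Lambda) = r_Q(\pi_\lambda, \Lambda, P) \cdot \mathcal{R}_Q(\pi_\lambda, \Lambda, P), \quad Q \in \mathcal{P}(M), $$
avec la formule de descente pour un produit de $(G,M)$-familles (cf. \cite[Lemme 4.2.4]{Li10a} et les arguments d'Arthur dans \cite[\S 7]{Ar82-Eis2}). Cette formule exprimera $\mathcal{M}_L(\tilde{P},\lambda)$ comme une somme finie indexée par des paires $(L_1, L_2) \in \mathcal{L}(M)^2$ (et des choix de $S_i \in \mathcal{F}(L_i)$) de termes de la forme
$$ r^{S_1}_{L_1}(\pi_\lambda, P) \cdot \mathcal{R}^{S_2}_{L_2}(\pi_\lambda, P), $$
un scalaire multiplié par un opérateur, tous deux associés respectivement aux deux $(G,M)$-familles scalaires $(r_Q)$ et opératorielles $(\mathcal{R}_Q)$ construites dans le \S\ref{sec:conv}.

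Il s'agira ensuite de majorer chaque terme de la somme. Pour la partie opératorielle, on utilisera que $M_{P|P}(s,0)$ et les opérateurs $R_{Q|P}(\pi)$ qui apparaissent dans $\mathcal{R}^{S_2}_{L_2}(\pi_\lambda, P)$ sont unitaires pour $\lambda \in i\mathfrak{a}_M^*$ (donc n'affectent pas la norme de traces) et appliquer le Lemme \ref{prop:majoration-RI} pour obtenir
$$ |\Tr(\mathcal{R}^{S_2}_{L_2}(\pi_\lambda, P) M_{P|P}(s,0) \mathcal{I}_{\tilde{P}}(\lambda,f)_{\chi,\pi})| \leq \|\mathcal{R}^{S_2}_{L_2}(\pi_\lambda, P) \mathcal{I}_{\tilde{P}}(\lambda,f)_{\chi,\pi}\|_1 \leq c_\pi(n_0) (1+\|\lambda\|)^{-n_0} $$
pour tout $n_0$. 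La contribution de chaque terme se borne alors par
$$ c_\pi(n_0) \int_{i(\mathfrak{a}^G_L)^*} |r^{S_1}_{L_1}(\pi_\lambda, P)| (1+\|\lambda\|)^{-n_0} \dd\lambda, $$
et la finitude de cette intégrale pour $n_0$ assez grand est précisément le contenu du Théorème \ref{prop:majoration}.

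Il reste la sommation sur $\pi$. Puisque $f$ est $\tilde{K}$-finie, la Proposition \ref{prop:finitude-pi} garantit que seuls un nombre fini de $\pi \in \Pi_\mathrm{unit}(\tilde{M}^1)$ fournissent une contribution non nulle; la même finitude assure d'ailleurs que toutes les traces en jeu se calculent dans des espaces de dimension finie. Le point délicat est la mise en place correcte de la formule de descente pour isoler les facteurs scalaires $r^{S_1}_{L_1}$ et opératoriels $\mathcal{R}^{S_2}_{L_2}$, de manière à rendre applicables séparément le Lemme \ref{prop:majoration-RI} et le Théorème \ref{prop:majoration}; il s'agit d'un exercice combinatoire dans l'esprit d'Arthur \cite{Ar82-Eis2} qui ne dépend pas de la présence du revêtement. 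Une fois cette décomposition établie, la convergence absolue résulte directement des majorations précédentes.
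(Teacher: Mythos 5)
Votre proposition suit pour l'essentiel la même démarche que la preuve du texte : unitarité de $M_{P|P}(s,0)$ pour passer à la norme de traces, formule de descente appliquée au produit de $(G,M)$-familles $\mathcal{M}_Q = r_Q \cdot \mathcal{R}_Q$ pour séparer les facteurs scalaires $r^S_L$ des facteurs opératoriels $\mathcal{R}'_S$, puis le Lemme \ref{prop:majoration-RI} et le Théorème \ref{prop:majoration} combinés à la finitude de la somme sur $\pi$. La seule précision à apporter est que le « point délicat » que vous signalez est en fait plus simple que ce que vous décrivez : la formule de descente donne directement $\mathcal{M}_L(\tilde{P},\lambda) = \sum_{S \in \mathcal{F}(L)} r^S_L(\pi_\lambda,P)\,\mathcal{R}'_S(\pi_\lambda,P)$, une somme simple sur $S \in \mathcal{F}(L)$ et non une somme sur des paires $(L_1,L_2)$.
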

\begin{proof}
  Puisque l'opérateur d'entrelacement $M_{P|P}(s,0)$ est unitaire, on a
  \begin{align*}
    |\Tr(\mathcal{M}_L(\tilde{P},\lambda) M_{P|P}(s,0) \mathcal{I}_{\tilde{P}}(\lambda,f)_{\chi,\pi})| & \leq \|\mathcal{M}_L(\tilde{P},\lambda) M_{P|P}(s,0) \mathcal{I}_{\tilde{P}}(\lambda,f)_{\chi,\pi}\|_1 \\
    & = \|\mathcal{M}_L(\tilde{P},\lambda) \mathcal{I}_{\tilde{P}}(\lambda,f)_{\chi,\pi} M_{P|P}(s,0)\|_1 \\
    & = \|\mathcal{M}_L(\tilde{P},\lambda) \mathcal{I}_{\tilde{P}}(\lambda,f)_{\chi,\pi} \|_1.
  \end{align*}
  Donc il suffit de montrer la convergence absolue de
  $$ \sum_{\pi \in \Pi_\mathrm{unit}(\tilde{M}^1)}\; \int_{i(\mathfrak{a}^G_L)^*} \|\mathcal{M}_L(\tilde{P},\lambda) \mathcal{I}_{\tilde{P}}(\lambda,f)_{\chi,\pi} \|_1 \dd\lambda. $$

  En appliquant la formule de descente pour $(G,L)$-familles \cite[Lemme 4.2.4]{Li10a}, $\mathcal{M}_L(\tilde{P},\lambda) \mathcal{I}_{\tilde{P}}(\lambda,f)_{\chi,\pi}$ est égal à
  $$ \sum_{S \in \mathcal{F}(L)} \mathcal{R}'_S(\pi_\lambda, P) r^S_L(\pi_\lambda,P) \mathcal{I}_{\tilde{P}}(\lambda,f)_{\chi,\pi}. $$
  D'où
  \begin{multline*}
    \sum_{\pi} \int_{i(\mathfrak{a}^G_L)^*} \|\mathcal{M}_L(\tilde{P},\lambda) \mathcal{I}_{\tilde{P}}(\lambda,f)_{\chi,\pi} \|_1 \dd\lambda \leq \\
    \sum_{\pi} \sum_{S \in \mathcal{F}(L)} \int_{i(\mathfrak{a}^G_L)^*} \|\mathcal{R}'_S(\pi_\lambda, P)  \mathcal{I}_{\tilde{P}}(\lambda,f)_{\chi,\pi}\|_1 \cdot |r^S_L(\pi_\lambda, P)| \dd\lambda .
  \end{multline*}

  Puisque la somme sur $\pi$ est finie, on conclut en appliquant le Lemme \ref{prop:majoration-RI} et le Théorème \ref{prop:majoration}.
\end{proof}

Rappelons que les objets $\mathcal{M}_L(\tilde{P},\lambda)$, $M_{P|P}(s,0)$ et $\mathcal{I}_{\tilde{P}}(\lambda,f)_{\chi,\pi}$ sont définis pour tout $M \in \mathcal{L}(M_0)$ et tout $P \in \mathcal{F}(M_0)$ pas forcément standard.

\begin{theorem}[Cf. {\cite[Theorem 8.2]{Ar82-Eis2}}]\label{prop:formule-J_chi}
  On a
  \begin{multline*}
    J_\chi(f) = \sum_{M \in \mathcal{L}(M_0)} \sum_{\pi \in \Pi_\mathrm{unit}(\tilde{M}^1)} \sum_{L \in \mathcal{L}(M)} \sum_{s \in W^L(M)_\text{reg}} |W^M_0| |W^G_0|^{-1} |\mathcal{P}(M)|^{-1} \cdot \\
    \cdot |\det(s-1|\mathfrak{a}^L_M)|^{-1} \int_{i(\mathfrak{a}^G_L)^*} \sum_{P \in \mathcal{P}(M)} \Tr(\mathcal{M}_L(\tilde{P},\lambda) M_{P|P}(s,0) \mathcal{I}_{\tilde{P}}(\lambda,f)_{\chi,\pi}) \dd\lambda .
  \end{multline*}
\end{theorem}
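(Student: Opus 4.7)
Le plan est de partir de la formule explicite pour $P^T(B)$ établie au Théorème \ref{prop:formule-explicite}, de la spécialiser en $T = T_0$ avec $B \in C_c^\infty(i\mathfrak{h}^*/i\mathfrak{a}_G^*)^W$ vérifiant $B(0) = 1$, de prendre la limite $\epsilon \to 0$ de $P^{T_0}(B^\epsilon)$, puis de réorganiser la sommation sur les paraboliques standards. Grâce au Théorème \ref{prop:P^T(B)-J^T}, on a $J_\chi(f) = J^{T_0}_\chi(f) = \lim_{\epsilon \to 0} P^{T_0}(B^\epsilon)$.

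D'abord, pour permuter la limite $\epsilon \to 0$ avec l'intégrale et les sommes présentes dans cette formule, on observe que $B^\epsilon_\pi(\lambda) = B(\epsilon(iY_\pi+\lambda))$ tend pointuellement vers $B(0) = 1$ tout en restant uniformément borné par $\|B\|_\infty$. La somme sur les paraboliques standards est finie, ainsi que celle sur $\pi$ d'après la Proposition \ref{prop:finitude-pi}. L'intégrale sans $B^\epsilon$ est absolument convergente d'après le Lemme \ref{prop:conv-abs}, qui s'appuie sur la décomposition $(G,L)$-familiale $\mathcal{M}_L(\tilde{P},\lambda) = \sum_{S \in \mathcal{F}(L)} \mathcal{R}'_S(\pi_\lambda, P) r^S_L(\pi_\lambda, P)$, sur la majoration du Lemme \ref{prop:majoration-RI} pour les opérateurs $\mathcal{R}'_S(\pi_\lambda, P) \mathcal{I}_{\tilde{P}}(\lambda,f)_{\chi,\pi}$, et sur le Théorème \ref{prop:majoration} pour les facteurs normalisants. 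La convergence dominée s'applique donc.

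Ensuite, il convient d'identifier $\mathcal{M}^{T_0}_L$ avec $\mathcal{M}_L$. Par définition, $Y_Q(T_0)$ est la projection de $T_0$ sur $\mathfrak{a}_M$, indépendante de $Q \in \mathcal{P}(M)$. La $(G,M)$-famille $c_Q(T_0, \Lambda) = e^{\langle \Lambda, (T_0)_M \rangle}$ est donc constante en $Q$. Dans le formalisme des $(G,M)$-familles, une famille scalaire constante $c_Q = c(\Lambda)$ satisfait $c^{L'}_M = c(0) \cdot \delta_{L', M}$; appliquée à notre situation via la formule du produit, cette propriété donne $\mathcal{M}^{T_0}_L(\tilde{P}, \lambda) = \mathcal{M}_L(\tilde{P}, \lambda)$ pour tout $L \in \mathcal{L}(M)$.

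Enfin, on applique l'identité combinatoire
$$ \sum_{P \supset P_0} \Phi(M_P, P) = \sum_{M \in \mathcal{L}(M_0)} |W^M_0| \, |W^G_0|^{-1} \sum_{P \in \mathcal{P}(M)} \Phi(M, P), $$
valable pour toute fonction $\Phi$ des paires $(M, P)$ avec $M \in \mathcal{L}(M_0)$ et $P \in \mathcal{P}(M)$, invariante par $W^G_0$-conjugaison. Cette invariance est satisfaite par l'intégrande puisque $f$ et $\chi$ sont fixés globalement. La difficulté principale sera précisément la justification de la convergence dominée à la deuxième étape, qui mobilise toute la machinerie analytique développée dans cette section, notamment la majoration intégrable du Théorème \ref{prop:majoration} pour les facteurs normalisants et le théorème de Paley-Wiener d'Arthur aux places archimédiennes garantissant la décroissance rapide des coefficients des opérateurs d'entrelacement.
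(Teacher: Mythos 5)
Votre démonstration suit pour l'essentiel exactement la même démarche que celle du texte : évaluation de $\lim_{\epsilon \to 0} P^{T_0}(B^\epsilon)$ à partir du Théorème \ref{prop:formule-explicite}, identification $\mathcal{M}^{T_0}_L(\tilde{P},\lambda) = \mathcal{M}_L(\tilde{P},\lambda)$ par constance de $Y_{Q_1}(T_0)$, passage de la somme sur les paraboliques standards à la somme sur les paires $(M,P)$ avec le facteur $|W^M_0|\,|W^G_0|^{-1}$, puis convergence dominée appuyée sur le Lemme \ref{prop:conv-abs}. Seule remarque mineure : la $(G,L)$-famille pertinente pour $\mathcal{M}_L$ est indexée par $Q_1 \in \mathcal{P}(L)$, de sorte que c'est la projection de $T_0$ sur $\mathfrak{a}_L$ (et non sur $\mathfrak{a}_M$) qui intervient, ce qui ne change rien à la conclusion.
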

\begin{proof}
  Vu le Théorème \ref{prop:P^T(B)-J^T}, on prend $B \in C_c^\infty(i\mathfrak{h}^*/i\mathfrak{a}_G^*)^W$ avec $B(0)=1$ et on calcule $\lim_{\epsilon \to 0} P^{T_0}(B^\epsilon)$ à l'aide du Théorème \ref{prop:formule-explicite}, qui dit que $P^{T_0}(B^\epsilon)$ est égal à
  $$ \sum_{\substack{P=MU \\ P \supset P_0}} \sum_{\pi, L, s} |\mathcal{P}(M)|^{-1} |\det(s-1|\mathfrak{a}^L_M)|^{-1} \int_{i(\mathfrak{a}^G_L)^*} \Tr(\mathcal{M}^{T_0}_L(\tilde{P},\lambda) M_{P|P}(s,0) \mathcal{I}_{\tilde{P}}(\lambda,f)_{\chi,\pi}) (B^\epsilon)_\pi(\lambda) \dd\lambda .$$

  D'abord, $\mathcal{M}^{T_0}_L(\tilde{P}, \lambda)$ est la limite
  $$ \lim_{\Lambda \to 0} \sum_{Q_1 \in \mathcal{P}(L)} e^{\angles{\Lambda, Y_{Q_1}(T_0)}} \mathcal{M}_{Q_1}(\tilde{P}, \lambda, \Lambda) \theta_{Q_1}(\Lambda)^{-1}. $$
  Or $Y_{Q_1}(T_0)$ est la projection de $T_0$ sur $\mathfrak{a}_L$, qui ne dépend pas de $Q_1$, donc $\mathcal{M}^{T_0}_L(\lambda, \tilde{P}) = \mathcal{M}_L(\lambda, \tilde{P})$.

  On remplace ensuite la somme sur $P \supset P_0$ par une somme sur $M \in \mathcal{L}(M_0)$ et $P \in \mathcal{P}(M)$ en introduisant simultanément le facteur $|W^M_0| |W^G_0|^{-1}$. Cela donne l'expression suivante pour $P^{T_0}(B^\epsilon)$
  \begin{multline*}
    \sum_{M,\pi,L, s} |W^M_0| |W^G_0|^{-1} |\mathcal{P}(M)|^{-1} |\det(s-1|\mathfrak{a}^L_M)|^{-1} \\ \int_{i(\mathfrak{a}^G_L)^*} \sum_{P \in \mathcal{P}(M)} \Tr(\mathcal{M}_L(\tilde{P},\lambda) M_{P|P}(s,0) \mathcal{I}_{\tilde{P}}(\lambda,f)_{\chi,\pi}) (B^\epsilon)_\pi(\lambda) \dd\lambda.
  \end{multline*}

  On a $(B^\epsilon)_\pi(\lambda) = B(\epsilon(iY_\pi+\lambda))$. Il est borné par $\sup_{\lambda \in \mathfrak{h}^*} |B(\lambda)|$ et converge simplement vers $B(0)=1$ lorsque $\epsilon \to 0$. Le théorème de convergence dominée et le Lemme \ref{prop:conv-abs} permettent de conclure.
\end{proof}

\begin{corollary}\label{prop:formule-J_chi-simple}
  La formule dans le Théorème \ref{prop:formule-J_chi} peut aussi s'écrire comme
  \begin{multline*}
    J_\chi(f) = \sum_{M \in \mathcal{L}(M_0)} \sum_{\pi \in \Pi_\mathrm{unit}(\tilde{M}^1)} \sum_{L \in \mathcal{L}(M)} \sum_{s \in W^L(M)_\text{reg}} |W^M_0| |W^G_0|^{-1} \cdot \\
    \cdot |\det(s-1|\mathfrak{a}^L_M)|^{-1} \int_{i(\mathfrak{a}^G_L)^*} \Tr(\mathcal{M}_L(\tilde{P},\lambda) M_{P|P}(s,0) \mathcal{I}_{\tilde{P}}(\lambda,f)_{\chi,\pi}) \dd\lambda
  \end{multline*}
  où $P \in \mathcal{P}(M)$ est arbitraire.
\end{corollary}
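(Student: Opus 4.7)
Le plan est de démontrer que, pour des données $M$, $L$, $s$, $\chi$, $\pi$, $\lambda$ fixées, l'expression
$$ \Tr(\mathcal{M}_L(\tilde{P},\lambda) M_{P|P}(s,0) \mathcal{I}_{\tilde{P}}(\lambda,f)_{\chi,\pi}) $$
est indépendante du choix de $P \in \mathcal{P}(M)$. Admettant cette indépendance, la somme intérieure $\sum_{P \in \mathcal{P}(M)}$ apparaissant dans le Théorème \ref{prop:formule-J_chi} produit exactement $|\mathcal{P}(M)|$ termes égaux, ce qui compense le facteur $|\mathcal{P}(M)|^{-1}$ et donne la formule du corollaire pour $P \in \mathcal{P}(M)$ arbitraire.

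Soient $P, P' \in \mathcal{P}(M)$ et posons $J := M_{P'|P}(\lambda)$, qui est unitaire pour $\lambda \in i\mathfrak{a}_M^*$. L'objectif est d'établir les trois identités : (i) $\mathcal{I}_{\tilde{P}'}(\lambda, f)_{\chi, \pi} = J \mathcal{I}_{\tilde{P}}(\lambda, f)_{\chi, \pi} J^{-1}$ ; (ii) $M_{P'|P'}(s, 0) = J M_{P|P}(s, 0) J^{-1}$ ; (iii) $\mathcal{M}_L(\tilde{P}', \lambda) = J \mathcal{M}_L(\tilde{P}, \lambda) J^{-1}$. La cyclicité de la trace donnera alors l'indépendance voulue. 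L'identité (i) est la propriété d'entrelacement standard de $M_{P'|P}(\lambda)$. Pour (ii), on combine les deux applications de la cocycle à $M_{P'|P}(s, \lambda)$, à savoir
$$ M_{P'|P'}(s,\lambda) \cdot M_{P'|P}(\lambda) = M_{P'|P}(s, \lambda) = M_{P'|P}(s\lambda) \cdot M_{P|P}(s,\lambda), $$
en utilisant le fait que $s\lambda = \lambda$ pour $\lambda \in i\mathfrak{a}_L^*$, puisque $s \in W^L(M)_\text{reg}$ fixe $\mathfrak{a}_L$ point par point. On en déduit $M_{P'|P'}(s,\lambda) = J M_{P|P}(s,\lambda) J^{-1}$, puis on spécialise en invoquant l'identité $M_{P|P}(s,\mu) = M_{P|P}(s,0)$ pour $\mu \in i\mathfrak{a}_L^*$ (et idem pour $P'$), déjà observée dans \S\ref{sec:formule-explicite}.

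L'identité (iii) est le point principal. La cocycle $M_{Q|P'}(\mu) = M_{Q|P}(\mu) M_{P|P'}(\mu)$ livre, pour tout $Q \in \mathcal{P}(M)$ et $\mu$ en position générale,
$$ \mathcal{M}_Q(\tilde{P}', \lambda, \Lambda) = J \mathcal{M}_Q(\tilde{P}, \lambda, \Lambda) J^{-1} \cdot u(\Lambda), $$
où $u(\Lambda) := M_{P'|P}(\lambda) M_{P|P'}(\lambda + \Lambda)$ est indépendant de $Q$ et vérifie $u(0) = \identity$. Puisque $u(\Lambda)$ ne dépend pas de $Q$, il se factorise hors de la somme $\sum_Q (\cdots)\, \theta_Q(\Lambda)^{-1}$ qui intervient dans la descente aux $(G,L)$-familles ; en prenant la limite $\Lambda \to 0$, on obtient $\mathcal{M}_L(\tilde{P}', \lambda) = J \mathcal{M}_L(\tilde{P}, \lambda) J^{-1} \cdot u(0) = J \mathcal{M}_L(\tilde{P}, \lambda) J^{-1}$. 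Le point à vérifier soigneusement sera la validité de cette factorisation dans la construction précise du $L$-terme de la $(G,M)$-famille $\{\mathcal{M}_Q(\tilde{P},\lambda,\Lambda)\}_{Q \in \mathcal{P}(M)}$, mais l'indépendance en $Q$ de $u(\Lambda)$ la rend essentiellement immédiate.
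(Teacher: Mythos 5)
Votre démonstration est correcte et suit essentiellement la même voie que celle du texte : on montre que $\Tr(\mathcal{M}_L(\tilde{P},\lambda) M_{P|P}(s,0) \mathcal{I}_{\tilde{P}}(\lambda,f)_{\chi,\pi})$ est indépendante de $P$ en conjuguant les trois facteurs par $M_{P'|P}(\lambda)$ via les relations de cocycle (en utilisant $s\lambda=\lambda$ pour $\lambda \in i\mathfrak{a}_L^*$) et la propriété $\Tr(AB)=\Tr(BA)$. Votre facteur correctif $u(\Lambda)$ avec $u(0)=\identity$ explicite proprement le passage à la $(G,L)$-famille que le texte traite par le calcul direct $\mathcal{M}_Q(\tilde{P}_1,\lambda,\Lambda) = M_{P|P_1}(\lambda)^{-1}\mathcal{M}_Q(\tilde{P},\lambda,\Lambda)M_{P|P_1}(\lambda+\Lambda)$; c'est le même argument.
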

\begin{proof}
  Montrons que le terme $\Tr(\cdots)$ est indépendant de $P$. Soient $P, P_1 \in \mathcal{P}(M)$. Si $\lambda, \Lambda \in i\mathfrak{a}_M^*$, alors
  \begin{align*}
    \mathcal{M}_Q(\tilde{P}_1, \lambda, \Lambda) & = M_{P|P_1}(\lambda)^{-1} M_{Q|P}(\lambda)^{-1} M_{Q|P}(\lambda+\Lambda) M_{P|P_1}(\lambda+\Lambda) \\
    & = M_{P|P_1}(\lambda)^{-1} \mathcal{M}_Q(\tilde{P}, \lambda, \Lambda) M_{P|P_1}(\lambda+\Lambda)
  \end{align*}
  pour tout $Q \in \mathcal{P}(M)$. Fixons $\lambda$, alors la $(G,L)$-famille en $\Lambda$ induite vérifie la même propriété. On en déduit
  $$ \mathcal{M}_L(\tilde{P}_1, \lambda) = M_{P|P_1}(\lambda)^{-1} \mathcal{M}_L(\tilde{P}, \lambda) M_{P|P_1}(\lambda). $$

  Supposons maintenant que $\lambda \in i\mathfrak{a}_L^*$ et $s \in W^L(M)_\text{reg}$. On a $M_{P|P}(s,0)=M_{P|P}(s,\lambda)$,  $M_{P_1|P_1}(s,0)=M_{P_1|P_1}(s,\lambda)$. En utilisant les propriétés des opérateurs d'entrelacement globaux, on obtient
  \begin{align*}
    \mathcal{M}_L(\tilde{P}_1,\lambda) M_{P_1|P_1}(s,0) \mathcal{I}_{\tilde{P}_1}(\lambda,f)_{\chi,\pi} & = M_{P|P_1}(\lambda)^{-1} \mathcal{M}_L(\tilde{P}, \lambda) M_{P|P_1}(\lambda) M_{P_1|P_1}(s,0) \mathcal{I}_{\tilde{P}_1}(\lambda,f)_{\chi,\pi} \\
    & = M_{P|P_1}(\lambda)^{-1} \mathcal{M}_L(\tilde{P}, \lambda) M_{P|P}(s,0) M_{P|P_1}(\lambda) \mathcal{I}_{\tilde{P}_1}(\lambda,f)_{\chi,\pi} \\
    & = M_{P|P_1}(\lambda)^{-1} \mathcal{M}_L(\tilde{P},\lambda) M_{P|P}(s,0) \mathcal{I}_{\tilde{P}}(\lambda,f)_{\chi,\pi} M_{P|P_1}(\lambda).
  \end{align*}
  Donc cet opérateur a la même trace que $\mathcal{M}_L(\tilde{P},\lambda) M_{P|P}(s,0) \mathcal{I}_{\tilde{P}}(\lambda,f)_{\chi,\pi}$.
\end{proof}

\section{Coefficients discrets}\label{sec:coef}
Soient $t > 0$ et $M \in \mathcal{L}(M_0)$, on définit
$$ \Pi_t(\tilde{M}^1) := \{\pi \in \Pi_\mathrm{unit}(\tilde{M}^1) : \|\Im \nu_\pi\|=t \} $$
où $\nu_\pi \in \mathfrak{h}^*_\C/W^M$ est défini dans \S\ref{sec:asymptotique}; rappelons que le caractère infinitésimal $\nu_\pi$ est bien défini comme un $W^M$-orbite dans $\mathfrak{h}^*/i\mathfrak{a}_M^*$ et on en prend un représentant de la norme minimale.

On définit
$$ J_t(f) := \sum_{\chi: \|\Im\nu_\chi\|=t} J_\chi(f), \quad f \in \mathcal{H}(\tilde{G}^1), $$
alors on a $\sum_{t \geq 0} J_t(f) = J(f)$. A priori, $J_t(f)$ est défini par une somme absolument convergente. Or il résultera du Lemme \ref{prop:carinf} qu'elle est effectivement une somme finie pourvu que l'on fixe les $\tilde{K}$-types de $f$. %FIXME: énoncer-la comme un corollaire à la fin.

Pour $t$ fixé, on s'intéresse à la partie dite $t$-discrète de $J(f)$ définie comme ci-dessous. Notons $L^2_{\text{disc},t}(M(F) \backslash \tilde{M}^1)$ la somme directe des composantes $\pi$-isotypiques de $L^2_\text{disc}(M(F) \backslash \tilde{M}^1)$ avec $\pi \in \Pi_t(\tilde{M}^1)$. Pour $P \in \mathcal{P}(M)$, on définit la représentation $\mathcal{I}_{\tilde{P},\text{disc},t}(\lambda)$ comme l'induite parabolique normalisée de $L^2_{\text{disc},t}(M(F) \backslash \tilde{M}^1) \otimes e^{\angles{\lambda, H_M(\cdot)}}$. Ces sous-représentations de $\mathcal{I}_{\tilde{P}}(\lambda)$ sont respectées par les opérateurs d'entrelacement $M_{Q|P}(w,\lambda)$ pour tous $P,Q \in \mathcal{P}(M)$, $\lambda \in \mathfrak{a}_{M,\C}^*$ et $w \in W^G_0$.

\begin{proposition}[Cf. {\cite[Lemma 4.1]{Ar88-TF2}}]\label{prop:adm}
  Pour tout $t \geq 0$, la représentation $\mathcal{I}_{\tilde{P},\mathrm{disc},t}(\lambda)$ est admissible.
\end{proposition}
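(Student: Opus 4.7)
Le plan est de suivre l'argument d'Arthur en l'adaptant aux revêtements. Puisque l'induction parabolique normalisée préserve l'admissibilité, il suffit de montrer que $L^2_{\text{disc},t}(M(F) \backslash \tilde{M}^1)$ est un $\tilde{M}^1$-module admissible. Posons $\tilde{K}_M := \tilde{K} \cap \tilde{M}$. Par la décomposition d'Iwasawa et la réciprocité de Frobenius sur $\tilde{K}$, cela équivaut à la finitude, pour tout $\tilde{K}_M$-type $\tau_M$, de la somme
$$ \sum_{\pi} m_{\text{disc}}(\pi) \cdot \dim\Hom_{\tilde{K}_M}(\tau_M, \pi|_{\tilde{K}_M}), $$
où $\pi$ parcourt les éléments de $\Pi_\mathrm{unit}(\tilde{M}^1)$ satisfaisant $\|\Im\nu_\pi\|=t$ et apparaissant dans $L^2_{\text{disc}}(M(F) \backslash \tilde{M}^1)$ avec multiplicité $m_{\text{disc}}(\pi)$. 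La finitude de chaque $m_{\text{disc}}(\pi)$ et celle de $\dim\Hom_{\tilde{K}_M}(\tau_M,\pi|_{\tilde{K}_M})$ étant standards, l'essentiel sera donc de borner le nombre de $\pi$ pertinents.

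Pour cela, j'utiliserais la construction du spectre discret par résidus \cite[VI]{MW94}: tout $\pi \in L^2_{\text{disc}}(M(F) \backslash \tilde{M}^1)$ est un résidu d'une série d'Eisenstein associée à une donnée cuspidale $(M', \sigma)$, où $M'$ est un sous-groupe de Lévi semi-standard contenu dans $M$ et $\sigma$ est une représentation cuspidale de $\tilde{M}'^1$. L'admissibilité des paramètres discrets (cf. Corollaire VI.1.8 de \cite{MW94} et la théorie qui le précède) garantit, pour chaque $(M',\sigma)$ fixée, que l'ensemble des $\pi$ ainsi obtenus est fini et que la différence $\nu_\pi - \nu_\sigma$, calculée dans $\mathfrak{h}_\C^*$ modulo le groupe de Weyl adéquat, appartient à un ensemble fini d'éléments réels. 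La contrainte $\|\Im\nu_\pi\|=t$ entraîne donc une borne uniforme sur $\|\Im\nu_\sigma\|$.

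Il resterait alors à majorer le nombre de données $(M',\sigma)$ intervenant. La condition $\dim\Hom_{\tilde{K}_M}(\tau_M, \pi|_{\tilde{K}_M})>0$ impose, via la descente parabolique à travers le terme constant, que les $\tilde{K}_{M'}$-types de $\sigma$ appartiennent à un ensemble fini déterminé par $\tau_M$. Combinée à la borne sur $\|\Im\nu_\sigma\|$, la finitude des $\sigma$ en résulte par la finitude à la Harish-Chandra du spectre cuspidal automorphe dont les $\tilde{K}$-types et la partie imaginaire du caractère infinitésimal sont bornés. L'obstacle principal sera de vérifier soigneusement cette finitude cuspidale et l'admissibilité des paramètres discrets dans le cadre des revêtements adéliques: l'argument repose sur les estimations de convergence pour les séries d'Eisenstein, sur le théorème de Paley-Wiener d'Arthur (valable pour les groupes dans la classe de Harish-Chandra, donc applicable aux composantes archimédiennes $\tilde{G}_\infty$), et sur le contrôle de la ramification non archimédienne grâce à la finitude de $V_\text{ram}$. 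Ces ingrédients étant disponibles pour les revêtements, l'argument d'Arthur se transporte sans modification essentielle.
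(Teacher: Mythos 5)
Votre réduction initiale est exactement celle du texte: l'induction parabolique préserve l'admissibilité, on se ramène donc à l'admissibilité de $L^2_{\text{disc},t}$, que l'on décompose selon les données cuspidales, chaque morceau $L^2_{\text{disc},\chi}$ étant admissible d'après la construction du spectre discret par résidus et l'admissibilité des paramètres discrets \cite[VI]{MW94}. Votre observation que les résidus sont pris en des points réels, de sorte que $\|\Im\nu_\pi\|=t$ contraint $\|\Im\nu_\sigma\|$ pour la donnée cuspidale sous-jacente, est également le bon point de départ pour l'étape finale.

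C'est précisément à cette étape finale que votre argument présente une lacune. Vous invoquez « la finitude à la Harish-Chandra du spectre cuspidal automorphe dont les $\tilde{K}$-types et la partie imaginaire du caractère infinitésimal sont bornés », mais le théorème de finitude de Harish-Chandra ne donne que la dimension finie de l'espace des formes cuspidales pour un caractère infinitésimal (ou un idéal de codimension finie de $Z(\mathfrak{g}_{\infty,\C})$) \emph{fixé}. Pour en déduire qu'il n'y a qu'un nombre fini de données cuspidales $\sigma$ avec $\|\Im\nu_\sigma\|$ borné et $\tilde{K}$-types prescrits, il faut deux faits supplémentaires, qui font l'objet du Lemme \ref{prop:carinf}: (1) l'ensemble $\Xi$ des caractères infinitésimaux cuspidaux à $\tilde{K}$-types fixés est discret dans $\mathfrak{h}^*_\C/W \bmod i\mathfrak{a}_G^*$ — cela se démontre via le spectre de l'élément de Casimir sur les formes cuspidales (Donnelly), et non via Paley-Wiener ou les estimations de convergence des séries d'Eisenstein que vous citez; (2) la partie \emph{réelle} des éléments de $\Xi$ est bornée — sans quoi la condition $\|\Im\nu\|=t$ découpe une région non compacte de $\mathfrak{h}^*_\C$ et la discrétude seule ne donne aucune finitude. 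Ce point (2) est le véritable contenu analytique: le texte le démontre en écrivant $\pi_\infty$ comme quotient de Langlands $J_{\tilde{\bar{Q}}|\tilde{Q}}(\sigma_\lambda)$ et en combinant la positivité $\angles{\Re\lambda,\alpha^\vee}>0$ avec la majoration $\angles{\Re\lambda-\rho_Q,\varpi_\alpha^\vee}\leq 0$ issue de la bornitude des coefficients matriciels d'une représentation unitaire (lemme de Langlands sur les asymptotiques). Votre esquisse ne mentionne nulle part ce contrôle de la partie réelle, et les ingrédients que vous proposez pour conclure (Paley-Wiener, ramification) ne le fournissent pas.
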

Rappelons que l'admissibilité signifie que chaque $\tilde{K}$-type est de multiplicité finie.
\begin{proof}
  Comme dans le cas local, l'admissibilité est préservée par induction parabolique, donc on se ramène à prouver l'admissibilité de $L^2_{\text{disc},t}(G(F) \backslash \tilde{G}^1)$. On décompose
  $$ L^2_{\text{disc},t}(G(F) \backslash \tilde{G}^1) = \bigoplus_{\chi : \|\Im \nu_\chi\|=t} L^2_{\text{disc},\chi}(G(F) \backslash \tilde{G}^1). $$

  La construction du spectre discret entraîne que chaque $L^2_{\text{disc},\chi}(G(F) \backslash \tilde{G}^1)$ est admissible (d'après les mêmes références que dans la preuve de la Proposition \ref{prop:finitude-pi}). D'après la construction du spectre discret \cite[V]{MW94}, le lemme suivant permet de conclure.
\end{proof}

\begin{lemma}\label{prop:carinf}
  Soit $\Gamma$ un sous-ensemble fini de $\Pi_\mathrm{unit}(\tilde{K})$. Désignons par $\Xi$ l'ensemble des caractères infinitésimaux $\nu_\pi \in \mathfrak{h}^*_\C/W$ des représentations automorphes cuspidales $\pi \in \Pi_\mathrm{unit}(\tilde{G}^1)$ contenant des $\tilde{K}$-types dans $\Gamma$. Alors
  \begin{enumerate}
    \item $\Xi$ est discret dans $\mathfrak{h}^*_\C/W \mod i\mathfrak{a}_G^*$,
    \item l'image réciproque de $\Xi$ dans $\mathfrak{h}_\C^*$ a pour partie réelle bornée.
  \end{enumerate}
  Idem pour les sous-groupes de Lévi de $\tilde{G}$.
\end{lemma}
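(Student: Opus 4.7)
\emph{Plan.} Both assertions, and their extensions to Levi subgroups, will be established by combining the unitarity of cuspidal representations with Harish-Chandra's classical finiteness theorems for automorphic forms.

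Part (2) follows directly from unitarity. Each cuspidal automorphic $\pi$ is unitary, hence its archimedean component $\pi_\infty$ is an irreducible unitary representation of $\tilde{G}_\infty$, a group in the Harish-Chandra class. The Langlands classification of the unitary dual yields a classical uniform bound: the real part of the infinitesimal character of any irreducible unitary representation is bounded by a constant depending only on the root system (essentially $\|\rho\|$, half the sum of positive roots). Since $\nu_\pi = \nu_{\pi_\infty}$ by the choice of representative made in \S\ref{sec:asymptotique}, this gives part (2).

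For part (1), a subset of a Hausdorff space is discrete iff it has finite intersection with every compact set. Combined with (2), the question reduces to showing that for every compact $\Omega \subset \mathfrak{h}_\C^*/W$, only finitely many cuspidal $\pi \in \Pi_\mathrm{unit}(\tilde{G}^1)$ with $\tilde{K}$-types in $\Gamma$ have $\nu_\pi$ landing in $\Omega$ modulo $i\mathfrak{a}_G^*$. This is a Harish-Chandra-type finiteness theorem for cusp forms: the subspace of the cuspidal spectrum of $\tilde{K}$-type in $\Gamma$ and with $\mathfrak{Z}(\mathfrak{g}_\infty)$-eigenvalue in the image of $\Omega$ is finite-dimensional. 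The argument uses the rapid decay of cusp forms, the finite multiplicity of the cuspidal spectrum, and the standard archimedean fact that $\tilde{K}$-finite, $\mathfrak{Z}$-finite, moderately increasing functions with bounded $\mathfrak{Z}$-eigenvalue form a finite-dimensional space. Only finitely many distinct $\pi$ can contribute, giving the desired finiteness, and hence discreteness.

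\emph{Main obstacle.} The one technical point is to ensure these finiteness theorems transfer to the cover $\tilde{G}$. Since $\tilde{G}_\infty$ remains in the Harish-Chandra class by the standing assumption, the archimedean input is unchanged. The global cuspidal spectral decomposition with finite multiplicities is furnished by \cite{MW94} in a form directly applicable to covers; in particular, the admissibility statements built into the construction of the discrete spectrum there (which also underlie Proposition \ref{prop:finitude-pi}) are exactly what is needed. The Levi case is proven by applying the identical argument with $\tilde{M}^1$ in place of $\tilde{G}^1$, using that $\tilde{M}_\infty$ too lies in the Harish-Chandra class.
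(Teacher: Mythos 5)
Your treatment of part (1) is in the spirit of the paper's proof (the paper reduces discreteness to the fact that the Casimir element has discrete spectrum with finite multiplicities on the cuspidal spectrum of fixed $\tilde{K}$-type, citing Donnelly; your appeal to Harish-Chandra's finiteness theorem for cusp forms is the same circle of ideas). But your proof of part (2) contains a genuine error. You claim that ``the real part of the infinitesimal character of any irreducible unitary representation is bounded by a constant depending only on the root system (essentially $\|\rho\|$)'' and conclude that (2) ``follows directly from unitarity.'' This is false in the real form $\mathfrak{h} = i\mathfrak{h}_K \oplus \mathfrak{h}_0$ used here: the discrete series $D_k$ of $\SL_2(\R)$ has infinitesimal character $\pm\tfrac{k-1}{2}$, which is \emph{real} and unbounded as $k \to \infty$. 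What unitarity gives you for free is only a bound on the Langlands parameter $\Re\lambda$ of the Langlands quotient $J_{\tilde{\bar Q}|\tilde Q}(\sigma_\lambda)$ (namely $0 < \angles{\Re\lambda,\alpha^\vee}$ and $\angles{\Re\lambda-\rho_Q,\varpi_\alpha^\vee}\le 0$); the full infinitesimal character is $\nu_\sigma + \lambda$ with $\sigma$ tempered, and $\Re\nu_\sigma$ is \emph{not} universally bounded. You have conflated the bound on the Langlands parameter with a bound on the infinitesimal character.

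The hypothesis on $\tilde K$-types is therefore essential to part (2), not only to part (1), and this is exactly how the paper argues: first, only finitely many $i\mathfrak{a}^*_{\tilde G_\infty}$-orbits of square-integrable (mod center) representations of $\tilde G_\infty$ contain a prescribed $\tilde K_\infty$-type (Wallach, 7.7.3), which bounds $\Re\nu$ for those; unitary parabolic induction only moves the imaginary part, giving the tempered case; and finally the general unitary case is handled by writing $\pi_\infty$ as a Langlands quotient and using the boundedness of its matrix coefficients together with Langlands' asymptotic formula (Wallach, 5.3.4) to force $\Re\lambda$ into a bounded set. In your write-up the entire first step — the only place where $\Gamma$ enters into part (2) — is missing, and without it the statement you are trying to prove is simply not true. (In the running example: holomorphic cusp forms of weight $k$ give cuspidal $\pi$ with $\Re\nu_\pi = \tfrac{k-1}{2}$ unbounded; it is only the restriction on the archimedean $\tilde K$-types, which forces $k$ to be bounded, that saves assertion (2).)
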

\begin{proof}
  Montrons d'abord que $\Xi$ est discret. En effet, l'élément de Casimir dans $U(\mathfrak{g}_{\infty,\C})$ agit sur l'espace des formes automorphes cuspidales sur $G(F) \backslash \tilde{G}^1$. Il suffit de montrer que le spectre de l'élément de Casimir est discret et chaque valeur propre est de multiplicité finie. Ceci est bien connu; voir par exemple \cite[Theorem 9.1]{Do82}, qui s'applique également aux revêtements. % ou bien \cite[Theorem 7.6]{OW81}

  La deuxième assertion concerne les représentations unitaires irréductibles de $\tilde{G}_\infty$ contenant un $\tilde{K}_\infty$-type prescrit. Il n'existe qu'un nombre fini de $i\mathfrak{a}^*_{\tilde{G}_\infty}$-orbites de représentations de carré intégrable modulo le centre qui contiennent le $\tilde{K}_\infty$-type choisi (voir \cite[7.7.3]{Wall88}), donc l'assertion vaut pour de telles représentations. On en déduit le cas des représentations tempérées car l'induction parabolique unitaire ne touche que la partie imaginaire du caractère infinitésimal.

  Considérons le cas général où la représentation en question $\pi_\infty \in \Pi_\mathrm{unit}(\tilde{G}_\infty)$ est le quotient de Langlands
  $$ J_{\tilde{\bar{Q}}|\tilde{Q}}(\sigma_\lambda): \mathcal{I}_{\tilde{Q}}(\sigma_\lambda) \twoheadrightarrow \pi_\infty $$
  où
  \begin{itemize}
    \item $\tilde{Q} = \tilde{L} U$ est un parabolique standard de $\tilde{G}_\infty$ (on fixe un parabolique minimal de $\tilde{G}_\infty$),
    \item $\sigma$ est une représentation tempérée de $\tilde{L}$,
    \item $\lambda \in \mathfrak{a}^*_{L,\C}$ est tel que
    \begin{equation}\label{eqn:lambda-positif-1}
      \angles{\Re\lambda, \alpha^\vee} > 0, \quad \alpha \in \Delta_Q .
    \end{equation}
  \end{itemize}
  % citer d'ailleurs Borel et Wallach, IV. 4.4.
  Un lemme de Langlands \cite[5.3.4]{Wall88} dit que, pour tous $v \in \mathcal{I}_{\tilde{Q}}(\sigma_\lambda)$ et $\check{v} \in \mathcal{I}_{\tilde{Q}}(\sigma^\vee_{-\lambda})$, on a
  \begin{equation}\label{eqn:Langlands}
    \lim_{a \xrightarrow{Q} \infty} e^{-\angles{\lambda-\rho_Q, H_L(a)}} \angles{\check{v}, \mathcal{I}_{\tilde{Q}}(\sigma_\lambda, a)v} = \angles{\check{v}(1), J_{\tilde{\bar{Q}}|\tilde{Q}}(\sigma_\lambda)v(1) }_{\sigma}.
  \end{equation}
  Ici la notation signifie que $a \in \widetilde{A_L}^\circ$, la composante neutre de la composante déployée de $Z_{\tilde{L}}$, et $H_L(a)$ tend fortement vers l'infini dans la chambre
  $$ \{ H \in \mathfrak{a}_L : \forall \alpha \in \Delta_Q, \; \angles{\alpha,H} > 0 \}, $$
  cf. la Définition \ref{def:limite-T}.

  On prend $v, \check{v}$ de sorte que $\angles{\check{v}, \mathcal{I}_{\tilde{Q}}(\sigma_\lambda, \cdot)v}$ est un coefficient matriciel de $\pi_\infty$, qui est borné puisque $\pi_\infty$ est unitaire, et que le côté à droite de \eqref{eqn:Langlands} est non nul. Il en résulte que
  \begin{equation}\label{eqn:lambda-positif-2}
    \angles{\Re\lambda - \rho_Q, \varpi_\alpha^\vee} \leq 0, \quad \alpha \in \Delta_Q .
  \end{equation}

  Vu \eqref{eqn:lambda-positif-1} et \eqref{eqn:lambda-positif-2}, c'est bien connu que $\Re\lambda$ appartient à un sous-ensemble borné de $\mathfrak{a}_L^*$. Puisque $\nu_{\pi_\infty} = \nu_{\sigma} + \lambda$ modulo $W$, l'assertion en résulte.
\end{proof}

\begin{corollary}
  Soient $f \in \mathcal{H}(\tilde{G}^1)$, $P=MU \in \mathcal{F}(M_0)$, $s \in W^G(M)_{\mathrm{reg}}$, alors l'opérateur $M_{P|P}(s,0) \mathcal{I}_{\tilde{P},\mathrm{disc},t}(0,f)$ est à trace.
\end{corollary}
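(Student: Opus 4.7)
Le plan est de montrer que l'opérateur $M_{P|P}(s,0) \mathcal{I}_{\tilde{P},\mathrm{disc},t}(0,f)$ est en fait de rang fini, ce qui entraîne a fortiori qu'il est à trace au sens de la Définition \ref{def:trace}. Les deux ingrédients essentiels sont la $\tilde{K}$-finitude de $f$ et l'admissibilité de $\mathcal{I}_{\tilde{P},\mathrm{disc},t}(0)$ fournie par la Proposition \ref{prop:adm}.

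Fixons un sous-ensemble fini $\Gamma \subset \Pi_\mathrm{unit}(\tilde{K})$ tel que $f \in \mathcal{H}(\tilde{G}^1)_\Gamma$. La bi-$\tilde{K}$-finitude de $f$ entraîne qu'il existe un sous-ensemble fini $\Gamma' \subset \Pi_\mathrm{unit}(\tilde{K})$ et des idempotents convoluants $e_\Gamma, e_{\Gamma'} \in L^1(\tilde{K})$ tels que $f = e_{\Gamma'} * f * e_\Gamma$. Par conséquent, pour toute représentation unitaire $\pi$ de $\tilde{G}^1$, on a $\pi(f) = \pi(e_{\Gamma'}) \pi(f) \pi(e_\Gamma)$; l'image de $\pi(f)$ est ainsi contenue dans le sous-espace $\Gamma'$-isotypique, et $\pi(f)$ s'annule sur l'orthogonal du sous-espace $\Gamma$-isotypique.

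Appliquons cela à $\pi := \mathcal{I}_{\tilde{P},\mathrm{disc},t}(0)$. D'après la Proposition \ref{prop:adm}, chaque composante $\tilde{K}$-isotypique de $\pi$ est de dimension finie, donc le sous-espace $\Gamma'$-isotypique l'est aussi. Il en résulte que $\mathcal{I}_{\tilde{P},\mathrm{disc},t}(0,f)$ a une image de dimension finie, et est donc un opérateur de rang fini. Puisque $M_{P|P}(s,0)$ est un opérateur borné (en fait unitaire pour $\lambda \in i\mathfrak{a}_M^*$, en particulier pour $\lambda = 0$), la composition $M_{P|P}(s,0)\mathcal{I}_{\tilde{P},\mathrm{disc},t}(0,f)$ est aussi de rang fini, a fortiori à trace.

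Il n'y a pas d'obstacle véritable: toute la substance se trouve dans la Proposition \ref{prop:adm}, dont ce corollaire n'est qu'une conséquence formelle combinant admissibilité et $\tilde{K}$-finitude de la fonction test. Le seul point à vérifier avec soin est l'existence du sous-ensemble fini $\Gamma'$ contrôlant les $\tilde{K}$-types de l'image de $\pi(f)$, ce qui est standard pour les groupes de la classe de Harish-Chandra et s'étend sans peine aux revêtements adéliques grâce à la trivialité centrale de $\bmu_m$.
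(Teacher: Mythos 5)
Votre démonstration est correcte et suit exactement la voie que le texte sous-entend : le corollaire est énoncé sans preuve immédiatement après la Proposition \ref{prop:adm} précisément parce qu'il s'en déduit formellement, par la combinaison de la bi-$\tilde{K}$-finitude de $f$ (qui confine l'image de $\mathcal{I}_{\tilde{P},\mathrm{disc},t}(0,f)$ dans un sous-espace $\Gamma'$-isotypique) et de l'admissibilité (qui rend ce sous-espace de dimension finie), d'où un opérateur de rang fini que la composition avec l'opérateur unitaire $M_{P|P}(s,0)$ laisse de rang fini. C'est d'ailleurs cohérent avec la remarque du texte selon laquelle tous les opérateurs considérés sont de rang fini grâce à la $\tilde{K}$-finitude de la fonction test.
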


Les termes avec $L=G$ dans le Corollaire \ref{prop:formule-J_chi-simple}, sommés sur tout $\chi$ avec $\|\Im\nu_\chi\|=t$, définissent une nouvelle distribution, à savoir la partie $t$-discrète de $J(f)$:
$$ J_{\text{disc},t}(f) := \sum_{M \in \mathcal{L}(M_0)} \sum_{s \in W^G(M)_\text{reg}} |W^M_0| |W^G_0|^{-1} |\det(1-s|\mathfrak{a}^G_M)|^{-1} \Tr(M_{P|P}(s,0) \mathcal{I}_{\tilde{P},\mathrm{disc},t}(0,f)) $$
où $P \in \mathcal{P}(M)$ est arbitraire.

Vu ladite admissibilité, on peut définir des coefficients uniques $a^{\tilde{G}}_\text{disc}(\pi) \in \C$, pour tout $\pi \in \Pi_t(\tilde{G}^1)$, tels que
$$ J_{\text{disc},t}(f) = \sum_{\pi \in \Pi_t(\tilde{G}^1)} a^{\tilde{G}}_\text{disc}(\pi) \Tr\pi(f), \quad f \in \mathcal{H}(\tilde{G}^1). $$

Il conviendra de définir un sous-ensemble de $\Pi_t(\tilde{G}^1)$ qui sera un domaine plus commode des coefficients $a^{\tilde{G}}_\text{disc}(\cdot)$ dans les articles suivants. Notons $\Pi_{\text{disc},t}(\tilde{G}^1)$ l'ensemble des constituants irréductibles (restreintes à $\tilde{G}^1$) des induites paraboliques normalisées de $\sigma \in \Pi_t(\tilde{M}^1)$ tel que $a^{\tilde{M}}_\text{disc}(\sigma) \neq 0$. C'est clair que si $\pi \in \Pi_t(\tilde{G}^1)$ et $a^{\tilde{G}}_\text{disc}(\pi) \neq 0$, alors $\pi \in \Pi_{\text{disc},t}(\tilde{G}^1)$.

\begin{proposition}
  Soit $\Gamma$ un sous-ensemble fini de $\Pi_\mathrm{unit}(\tilde{K})$. Alors il n'existe qu'un nombre fini de $\pi \in \Pi_{\mathrm{disc},t}(\tilde{G})$ dont la restriction à $\tilde{K}$ contient un élément de $\Gamma$.
\end{proposition}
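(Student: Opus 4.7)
Mon plan est de procéder par récurrence sur le rang semi-simple de $G$, en combinant la Proposition \ref{prop:adm} (admissibilité du spectre $t$-discret induit) avec la réciprocité de Frobenius pour la décomposition d'Iwasawa $\tilde{G} = \tilde{P} \cdot \tilde{K}$. D'après la définition de $\Pi_{\text{disc},t}(\tilde{G}^1)$, tout élément $\pi$ est constituant irréductible, restreint à $\tilde{G}^1$, d'une induite parabolique normalisée $\mathcal{I}_{\tilde{P}}(\sigma)$ pour un certain $M \in \mathcal{L}(M_0)$, $P \in \mathcal{P}(M)$, et $\sigma \in \Pi_t(\tilde{M}^1)$ vérifiant $a^{\tilde{M}}_{\text{disc}}(\sigma) \neq 0$. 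L'ensemble $\mathcal{L}(M_0)$ étant fini, je traiterai chaque Lévi $M$ séparément.

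Pour $M = G$, les $\pi \in \Pi_t(\tilde{G}^1)$ avec $a^{\tilde{G}}_{\text{disc}}(\pi) \neq 0$ apparaissent comme constituants irréductibles des représentations $\mathcal{I}_{\tilde{P}', \text{disc}, t}(0)$ pour $P' \in \mathcal{F}(M_0)$; la Proposition \ref{prop:adm} garantit leur admissibilité, d'où la finitude voulue. Pour $M$ un Lévi propre, j'utiliserai la décomposition d'Iwasawa pour identifier, en tant que $\tilde{K}$-représentations,
$$ \mathcal{I}_{\tilde{P}}(\sigma)|_{\tilde{K}} \simeq \Ind_{\tilde{K} \cap \tilde{M}}^{\tilde{K}}\bigl(\sigma|_{\tilde{K} \cap \tilde{M}}\bigr). $$
La réciprocité de Frobenius fournira alors un sous-ensemble fini $\Gamma_M \subset \Pi_\mathrm{unit}(\tilde{K} \cap \tilde{M})$, déterminé par $\Gamma$, tel que si $\pi|_{\tilde{K}}$ contient un élément de $\Gamma$, alors $\sigma|_{\tilde{K} \cap \tilde{M}}$ contient un élément de $\Gamma_M$. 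La condition $a^{\tilde{M}}_{\text{disc}}(\sigma) \neq 0$ entraîne $\sigma \in \Pi_{\text{disc},t}(\tilde{M}^1)$ en prenant le Lévi $M$ lui-même dans la définition relative à $\tilde{M}$. L'hypothèse de récurrence, appliquée à $\tilde{M}$ et à $\Gamma_M$, fournira la finitude des $\sigma$ concernés, et chaque induite $\mathcal{I}_{\tilde{P}}(\sigma)$ étant admissible n'admet qu'un nombre fini de constituants irréductibles contenant un $\tilde{K}$-type dans $\Gamma$.

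L'obstacle principal me semble de nature plutôt structurelle que technique: il s'agit de vérifier soigneusement que la définition récursive de $\Pi_{\text{disc},t}$ est bien fondée, notamment l'implication $a^{\tilde{M}}_{\text{disc}}(\sigma) \neq 0 \Rightarrow \sigma \in \Pi_{\text{disc},t}(\tilde{M}^1)$, qui est immédiate mais indispensable à la récurrence. Aucun ingrédient analytique profond supplémentaire par rapport à la Proposition \ref{prop:adm} et au caractère $\tilde{K}$-fini des représentations induites ne devrait être nécessaire.
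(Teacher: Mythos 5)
Votre argument est correct, mais il suit un chemin plus long que celui du texte. La preuve de l'article tient en deux lignes: par transitivité de l'induction parabolique, tout élément de $\Pi_{\mathrm{disc},t}(\tilde{G})$ — constituant d'une induite $\mathcal{I}_{\tilde{P}}(\sigma)$ avec $a^{\tilde{M}}_{\mathrm{disc}}(\sigma) \neq 0$, donc avec $\sigma$ lui-même constituant d'une induite du spectre $t$-discret d'un Lévi plus petit — est un constituant irréductible de l'une des représentations $\mathcal{I}_{\tilde{P}',\mathrm{disc},t}(0)$, $P'$ parcourant l'ensemble fini $\mathcal{F}(M_0)$; l'admissibilité de ces représentations (Proposition \ref{prop:adm}) donne aussitôt la finitude. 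Vous remplacez cette unique application de l'induction par étages par une récurrence sur les Lévi, en propageant la contrainte sur les $\tilde{K}$-types vers les données inductrices au moyen de la décomposition d'Iwasawa et de la réciprocité de Frobenius; le cas de base de chaque niveau coïncide avec l'argument de l'article appliqué au Lévi correspondant. Les deux démarches reposent exactement sur les mêmes ingrédients (l'admissibilité des $\mathcal{I}_{\tilde{R},\mathrm{disc},t}(0)$ et le fait que $a^{\tilde{M}}_{\mathrm{disc}}(\sigma) \neq 0$ force $\sigma$ à être un constituant d'une telle induite); la vôtre a le mérite d'expliciter la bonne fondation de la définition récursive de $\Pi_{\mathrm{disc},t}$ et le contrôle des types du Lévi, au prix d'une vérification supplémentaire (l'identification $\mathcal{I}_{\tilde{P}}(\sigma)|_{\tilde{K}} \simeq \Ind_{\tilde{K} \cap \tilde{P}}^{\tilde{K}}(\sigma|_{\tilde{K} \cap \tilde{M}})$, qui requiert que $\tilde{K}$ soit en bonne position relativement à $M$, hypothèse bien satisfaite ici), là où l'article se contente d'invoquer l'induction par étages.
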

\begin{proof}
  Observons que les éléments de $\Pi_{\text{disc},t}(\tilde{G})$ sont des constituants irréductibles de $\mathcal{I}_{\tilde{P},\text{disc},t}(0)$, où $P$ parcourt $\mathcal{F}(M_0)$. La finitude découle donc de l'admissibilité.
\end{proof}

\begin{remark}
  Jusqu'à maintenant nous avons fixé $t$ et nous n'étudions que $J_{\text{disc},t}(f)$. Il serait tentant de définir $J_\text{disc}(f)$ comme la somme des termes correspondants à $L=G$  dans le Corollaire \ref{prop:formule-J_chi-simple} et essayer de montrer que $J_\text{disc}(f) = \sum_t J_{\text{disc},t}(f)$. Il s'agit d'un problème de la convergence absolue de $\sum_{t \geq 0} J_{\text{disc},t}(f)$ comme une intégrale double. Nous ne traitons pas ce problème ici, cependant il convient de remarquer que le cas des groupes réductifs connexes est résolu dans \cite{FLM11}.
\end{remark}

\bibliographystyle{abbrv-fr}
\bibliography{metaplectic}

\bigskip
\begin{flushleft}
  Wen-Wei Li \\
  Institut de Mathématiques de Jussieu \\
  175 rue du Chevaleret, 75013 Paris  \\
  France \\
  Adresse électronique: \texttt{wenweili@math.jussieu.fr}
\end{flushleft}

%\printindex[iFT3]
\end{document}